\documentclass{article}
\setlength{\oddsidemargin}{0.3in} \setlength{\evensidemargin}{0.3in} \setlength{\textwidth}{6.2in}

\usepackage{amsmath, amsthm, amsfonts, amssymb, amscd, stmaryrd,xcolor,enumerate}

\usepackage{graphicx}
\usepackage{tikz}

\theoremstyle{definition}

\theoremstyle{plain}
\newtheorem{Thm}{Theorem}
\newtheorem{Lem}{Lemma}
\newtheorem{Cor}{Corollary}

\newtheorem{Prop}{Proposition}

\numberwithin{equation}{section}		
\numberwithin{figure}{Prob}
\numberwithin{table}{Prob}
\numberwithin{Thm}{section}
\numberwithin{Lem}{section}
\numberwithin{Def}{section}
\numberwithin{Prop}{section}
\numberwithin{Cor}{section}

\newcommand{\Z}{\mathbb{Z}}

\newcommand{\trace}{\text{tr}}

\title{Almost Commutative Terwilliger Algebras of Group Association Schemes II: Primitive Idempotents}
\author{Nicholas L. Bastian}

\date{\today}

\begin{document}

\maketitle

\begin{abstract}
This paper is a continuation of Almost Commutative Terwilliger Algebras of Group Association Schemes I: Classification \cite{Bastian1}. In that paper, we found all groups $G$ for which the Terwilliger algebra of the group association scheme, denoted $T(G)$, is almost commutative. We also found the primitive idempotents for $T(G)$ for three of the four types of such groups. In this paper, we determine the primitive idempotents for the fourth type. 
  \end{abstract}

\textbf{Keywords}:
Terwilliger algebra, Camina group, association scheme, group association scheme, Schur ring, centralizer algebra\\

\textbf{MSC 2020 Classification}: 05E30, 05E16  

\section{Introduction}

Camina $p-$groups $G$ have particularly nice properties. One property that is of particular interest is that $G'$ and $Z(G)$ are both elementary abelian. As a result of this structure the group association scheme for Camina $p-$groups is also particularly nice. 

In the first paper \cite{Bastian1}, we showed that given any finite group $G$, the Terwilliger algebra for the group association scheme, denoted $T(G)$, is almost commutative if and only if $G$ is abelian, a Frobenius group of the from $(\Z_p)^n\rtimes \Z_{p^n-1}$ for prime $p$ and $n>0$, the group $(\Z_3)^2\rtimes Q_8$ where $Q_8$ is the quaternion group of order $8$, or a Camina $p-$group for prime $p$. We also determined the dimension and all the primitive idempotents for $T(G)$ for the first three of these four types of groups. This paper focuses on finding the dimension and non-primary primitive idempotents for $T(G)$ when $G$ is a Camina $p-$group.  

This paper is organized as follows. In Section $2$ we shall discuss Camina groups as well as results that will be useful in determining the primitive idempotents for their Terwilliger algebra. Section $3$ will discuss what a Terwilliger algebra is as well as what it means for a Terwilliger algebra to be almost commutative. We here also mention the results of the previous paper \cite{Bastian1} that are relevant to this paper. In Section $4$ we shall determine the primitive idempotents for $T(G)$ for all Camina $p-$groups $G$.

\section{Camina Groups}

A group $G$ is a \emph{Camina group} \cite{Camina,Lewis1} if every conjugacy class of $G$ outside of $G'$ is a coset of $G'$ in $G$. In this paper our interest is in Camina $p-$groups. It has also been shown by Dark and Scoppola \cite{Dark1} that every Camina $p-$group has nilpotency class $2$ or $3$. We mention here results that will be of particular interest to us.

\begin{Prop}[Macdonald, \cite{MacDonald}]\label{thm:Camina3}
        Let $G$ be a Camina $p-$group of nilpotency class $3$. Then the lower central series of $G$ is $\{e\}< Z(G)< G'< G$. Furthermore, $[G\colon G']=p^{2n}$ and $[G'\colon Z(G)]=p^n$ for some even integer $n>0$. Additionally, $G/G'$ and $G'/Z(G)$ are elementary abelian groups.
\end{Prop}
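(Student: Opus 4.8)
The plan is to deduce everything from the arithmetic of the Camina condition together with a descent to nilpotency class $2$ -- essentially Macdonald's argument, recast. The shape of the lower central series is almost formal: since $G$ has class $3$ we have $\gamma_4(G)=\{e\}$, hence $\gamma_3(G)=[G',G]\le Z(G)$, and $[G',G']\le\gamma_4(G)=\{e\}$, so $G'$ is abelian. Moreover $Z(G)\le G'$: were there a $z\in Z(G)\setminus G'$, the Camina hypothesis would force the singleton class $\{z\}$ to equal the coset $zG'$, whence $G'=\{e\}$ and $G$ is abelian, a contradiction. Thus $\{e\}<\gamma_3(G)\le Z(G)\le G'<G$, and it remains to prove $Z(G)=\gamma_3(G)$, that $G/G'$ and $G'/Z(G)$ are elementary abelian, and the index identities $[G\colon G']=p^{2n}$, $[G'\colon Z(G)]=p^{n}$ with $n$ even (here $n>0$ is immediate from nonabelianness).

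Because $\gamma_3(G)$ and $Z(G)$ are central, the Camina property passes to the quotients: for $\bar g$ outside the image of $G'$ one has $\bar g^{\bar G}=\overline{g^G}=\overline{gG'}=\bar g\,\overline{G'}$. Hence $G/\gamma_3(G)$ and $G/Z(G)$ are Camina $p$-groups of class exactly $2$ (class is $\ge 2$ since $\gamma_3(G)\ne G'$ and $Z(G)\ne G'$ for a class-$3$ group). For a Camina $p$-group $H$ of class $2$ one has $Z(H)=H'$ with $H/H'$ and $H'$ both elementary abelian and $[H\colon H']=p^{2m}$: for $g\notin Z(H)=H'$ the map $x\mapsto[g,x]$ is a surjective homomorphism onto $H'$; if $g^p\notin H'$ then $x\mapsto[g^p,x]$ is also surjective onto $H'$, but $[g^p,x]=[g,x]^p$ ranges only over $\{y^p:y\in H'\}$, a proper subgroup of the nontrivial $p$-group $H'$ -- contradiction, so $H/H'$ is elementary abelian, and then $[g,x]^p=[g,x^p]=e$ shows $H'$ is too; finally, composing the alternating form $H/H'\times H/H'\to H'$, $(\bar x,\bar y)\mapsto[x,y]$, with any nonzero $\mathbb F_p$-functional on $H'$ gives an alternating bilinear form whose radical is trivial by the Camina surjectivity, so $\dim_{\mathbb F_p}(H/H')$ is even. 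Applying this with $H=G/Z(G)$ gives that $G/G'=H/H'$ is elementary abelian, $[G\colon G']=p^{2n}$, and $G'/Z(G)=H'$ is elementary abelian; applying it with $H=G/\gamma_3(G)$ gives $Z\bigl(G/\gamma_3(G)\bigr)=G'/\gamma_3(G)$, and with $H=G/Z(G)$ gives $Z_2(G)=G'$ for the second center.

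The crux, and the step I expect to be the main obstacle, is $Z(G)\subseteq\gamma_3(G)$. Here is the line I would pursue. Fix $g\notin G'$. For any $z\in Z(G)\le G'$ the element $gz$ lies in $gG'=g^G$, so $z=[g,t]$ for some $t\in G$; since $z$ is central, $[g,t^k]=z^k$ for all $k$, and because $G/G'$ is elementary abelian $t^p\in G'$, whence $z^p=[g,t^p]\in[G',G]=\gamma_3(G)$. Thus $Z(G)/\gamma_3(G)$ is elementary abelian. To upgrade this to triviality one must bring in the bilinear maps carried by the associated graded: the alternating map $\omega\colon G/G'\times G/G'\to G'/Z(G)$, $(\bar x,\bar y)\mapsto[x,y]Z(G)$, and the map $B\colon G'/Z(G)\times G/G'\to\gamma_3(G)$, $(\bar c,\bar x)\mapsto[c,x]$, which is well defined because $G'$ is abelian and $[\gamma_3(G),G]=\{e\}$. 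The Camina condition makes $\omega(\bar x,-)$ and $[x,-]\bmod\gamma_3(G)$ surjective for every $\bar x\ne 0$, and makes the left radical of $B$ trivial; feeding the resulting rank identities against the Hall--Witt identity (three-subgroups lemma) relating $\omega$ and $B$ forces $\gamma_3(G)=Z(G)$, then $[G'\colon Z(G)]=p^{n}$, and finally exhibits a nondegenerate alternating $\mathbb F_p$-form on $G'/Z(G)$, so that $n$ is even. This last block of bookkeeping is the technical heart; I would carry it out following Macdonald \cite{MacDonald} rather than reprove it from scratch.
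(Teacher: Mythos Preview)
The paper does not supply a proof of this proposition; it is quoted from Macdonald \cite{MacDonald} as background and used without argument. There is therefore no ``paper's own proof'' to compare your proposal against.

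That said, your sketch is a faithful outline of Macdonald's route. The reduction to the class-$2$ situation via the Camina quotients $G/\gamma_3(G)$ and $G/Z(G)$ is correct, and your treatment of class-$2$ Camina $p$-groups is sound --- in particular, the alternating-form step really does yield a nondegenerate form for \emph{every} nonzero functional $\phi$ on $H'$, since Camina surjectivity gives $\{[x,y]:y\in H\}=H'$ whenever $\bar x\ne 0$, so $\phi([x,\,\cdot\,])$ cannot vanish identically. You also correctly isolate where the real work lies: the equality $Z(G)=\gamma_3(G)$, the index relation $[G'\colon Z(G)]^{2}=[G\colon G']$, and the evenness of $n$. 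Your bilinear-map setup (the pairing $\omega$ and the map $B$, tied together by Hall--Witt) is the right framework for this, and your decision to cite Macdonald for the detailed rank bookkeeping is entirely reasonable --- that is precisely what the present paper does as well.
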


\begin{Prop}[Macdonald, \cite{MacDonald}]\label{prop:cam2center}
        Let $G$ be a Camina p-group. Then $Z(G)$ is elementary abelian.
    \end{Prop}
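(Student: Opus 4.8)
The plan is to show that every $z\in Z(G)$ satisfies $z^{p}=1$; equivalently, that $Z(G)$ has exponent $p$. Everything will follow from one observation, which I will call the \emph{engine}: if there exists $g\in G\setminus G'$ with $g^{p}\in Z(G)$, then $z^{p}=1$ for every $z\in Z(G)$. To see this, first note that $Z(G)\subseteq G'$: a central element $z$ has conjugacy class $\{z\}$, so were $z\notin G'$ the Camina hypothesis would force $\{z\}=zG'$ and hence $G'=\{e\}$, impossible for a Camina $p$-group. Now fix $g$ as in the engine and take $z\in Z(G)\subseteq G'$. Since $g\notin G'$, the Camina hypothesis gives $g^{G}=gG'$, so $gz$ is conjugate to $g$; hence $(gz)^{p}$ is conjugate to $g^{p}$, and as $g^{p}$ is central its conjugacy class is $\{g^{p}\}$, so $(gz)^{p}=g^{p}$. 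On the other hand $z$ is central, so $(gz)^{p}=g^{p}z^{p}$, and therefore $z^{p}=1$. It remains to produce a suitable $g$, and I will do this according to the nilpotency class of $G$, which is $2$ or $3$ by the theorem of Dark and Scoppola.

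Suppose $G$ has class $2$. Then $G'\subseteq Z(G)$, so in fact $Z(G)=G'$. Since $G$ is nonabelian, $G/G'$ is a nontrivial finite $p$-group and hence contains an element of order $p$; any preimage $g\in G\setminus G'$ of such an element has $g^{p}\in G'=Z(G)$, and the engine applies.

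Suppose $G$ has class $3$. By Proposition~\ref{thm:Camina3} the lower central series is $\{e\}<Z(G)<G'<G$, with $G/G'$ and $G'/Z(G)$ elementary abelian and $G'$ abelian. I would pass to $H:=G/Z(G)$: it is again a Camina $p$-group (with $H'=G'/Z(G)$), and it has class at most $2$ since $\gamma_{3}(H)=\gamma_{3}(G)Z(G)/Z(G)=\{e\}$, so $H'\subseteq Z(H)$. A suitable $g$ for $G$ is exactly a preimage of an element of order $p$ in $H\setminus H'$, so the task reduces to exhibiting such an element of $H$. For $h\in H\setminus H'$ the Camina property makes $x\mapsto[h,x]$ a surjective homomorphism $H\to H'$, so $|H|=|H'|\cdot|C_{H}(h)|$ with $C_{H}(h)\supseteq\langle h\rangle H'\supsetneq H'$, whence $|H/H'|\ge p\,|H'|$. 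If no element of $H\setminus H'$ had order $p$, then (as $H/H'\cong G/G'$ is elementary abelian) every $p$-th power would lie in $H'$, and for $p$ odd the $p$-th power map would be an endomorphism of $H$---in class $2$ one has $(h_{1}h_{2})^{p}=h_{1}^{p}h_{2}^{p}[h_{2},h_{1}]^{\binom{p}{2}}$ and $[h_{2},h_{1}]^{\binom{p}{2}}=1$ since $p\mid\binom{p}{2}$ and $H'$ is elementary abelian---with image in $H'$ and kernel the set of elements of order dividing $p$, which by hypothesis equals $H'$; this yields $|H/H'|\le|H'|$, contradicting $|H/H'|\ge p\,|H'|$. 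Hence the required $g$ exists and the engine concludes the proof.

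The step I expect to be the main obstacle is this last one: producing $g$ when $G$ has class $3$, i.e.\ showing that the class-$2$ Camina $p$-group $G/Z(G)$ has an element of order $p$ outside its derived subgroup. The power-map argument disposes of the odd primes, but it breaks down at $p=2$---squaring is not a homomorphism, and $Q_{8}$ already shows that a class-$2$ Camina $2$-group can have all of its involutions inside the derived subgroup---so the prime $2$ will need a separate, more computational treatment, presumably exploiting the exact indices $[G\colon G']=2^{2n}$ and $[G'\colon Z(G)]=2^{n}$ from Proposition~\ref{thm:Camina3} together with a direct study of the commutator and squaring maps on $G/Z(G)$.
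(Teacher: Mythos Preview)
The paper does not prove this proposition; it is quoted from Macdonald with a citation and no argument, so there is nothing to compare your approach against. That said, your engine is correct and your class~$2$ argument is complete.

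For class~$3$ you are working harder than necessary and, as you note yourself, you end up with an unfinished case at $p=2$. The gap is real: your counting argument in $H=G/Z(G)$ genuinely collapses for $p=2$ because squaring need not be a homomorphism there, and the indices from Proposition~\ref{thm:Camina3} do not obviously rescue it. But the gap is also avoidable. Instead of hunting for $g\in G\setminus G'$ with $g^{p}\in Z(G)$, run the very same engine one layer down: pick any $x\in G'\setminus Z(G)$ (nonempty since $Z(G)<G'$). By Proposition~\ref{thm:Camina3} the quotient $G'/Z(G)$ is elementary abelian, so $x^{p}\in Z(G)$ automatically, with no search required. By Proposition~\ref{lem:pcamclass} one has $x^{G}=xZ(G)$, so for $z\in Z(G)$ the element $xz$ is conjugate to $x$, whence $(xz)^{p}$ is conjugate to $x^{p}\in Z(G)$ and therefore equals $x^{p}$. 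Since $z$ is central, $(xz)^{p}=x^{p}z^{p}$, giving $z^{p}=1$. This works uniformly for all primes and replaces your entire class~$3$ discussion, including the problematic $p=2$ case.
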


     \begin{Lem}[\cite{Lewis1}]\label{lem:dercam}
            Let $G$ be a non-abelian Camina group. Then $Z(G)\leq G'$.
        \end{Lem}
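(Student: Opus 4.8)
The plan is to argue by contradiction, exploiting the tension between a central element forming a singleton conjugacy class and the Camina hypothesis that any conjugacy class meeting $G\setminus G'$ must be an entire coset of $G'$. So I would begin by supposing $Z(G)\not\leq G'$; equivalently (the contrapositive of the desired inclusion), there is some $z\in Z(G)$ with $z\notin G'$. Since $z$ is central, its conjugacy class in $G$ is $\{z\}$. On the other hand, as $G'\normal G$ and $z\notin G'$, no conjugate of $z$ lies in $G'$, so the class of $z$ is a conjugacy class lying outside $G'$; by the definition of a Camina group it must therefore be a coset of $G'$, and the only coset of $G'$ containing $z$ is $zG'$.

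Next I would simply compare the two descriptions of this conjugacy class: $zG'=\{z\}$ forces $|G'|=1$, i.e.\ $G'=\{e\}$, so $G$ is abelian. This contradicts the hypothesis that $G$ is non-abelian, and hence no such $z$ can exist, which gives $Z(G)\leq G'$. Equivalently, one may phrase this without contradiction: for an arbitrary $z\in Z(G)$, the equality $\{z\}=zG'$ would have to hold if $z\notin G'$, and since $G$ is non-abelian this is impossible, so every central element lies in $G'$.

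I do not expect any real obstacle here — the result is essentially immediate from the definition of a Camina group. The only points deserving a moment's care are using $G'\normal G$ to see that $z\notin G'$ actually puts the whole class of $z$ outside $G'$ (so the Camina condition genuinely applies), and applying the Camina property to $z$ itself rather than to some coset representative. Everything else is a one-line coset computation.
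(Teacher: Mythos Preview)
Your argument is correct: if $z\in Z(G)\setminus G'$, then its conjugacy class is the singleton $\{z\}$, which by the Camina hypothesis must equal the full coset $zG'$, forcing $G'=\{e\}$ and contradicting non-abelianness. The paper itself does not supply a proof of this lemma; it is simply quoted from \cite{Lewis1}, so there is no in-paper argument to compare against, but your one-line derivation from the definition is exactly the standard proof.
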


        \begin{Prop}[\cite{Lewis1}]\label{prop:cammod}
        Suppose that $G$ is a Camina group and $N\trianglelefteq G'$. Then $G/N$ is a Camina group.
    \end{Prop}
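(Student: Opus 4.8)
The plan is to translate the Camina condition into a statement about commutators and then transport it along the quotient homomorphism $\pi\colon G\to G/N$. Since the quotient $G/N$ is formed, I read the hypothesis as saying that $N$ is a normal subgroup of $G$ contained in $G'$; the containment $N\subseteq G'$ will be used below only to identify $(G/N)'$ with $G'/N$.

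First I would record the standard reformulation of the Camina property: a group $G$ is Camina if and only if $\{[g,x]:x\in G\}=G'$ for every $g\in G\setminus G'$. This holds because $g^{x}=g\,[g,x]$, so the conjugacy class of $g$ is exactly $g\cdot\{[g,x]:x\in G\}$; as $\{[g,x]:x\in G\}\subseteq G'$ always, this class is the coset $gG'$ precisely when $\{[g,x]:x\in G\}$ exhausts $G'$. One also uses that $G'\trianglelefteq G$, so any conjugacy class meeting $G'$ lies inside $G'$; hence the classes ``outside $G'$'' are exactly those of elements $g\notin G'$.

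Next comes the main step. From $N\subseteq G'$ we get $(G/N)'=\pi(G')=G'/N$, and for $g\in G$ the coset $gN$ lies outside $(G/N)'$ if and only if $g\notin G'$. Fix such a $\overline{g}=gN$, so $g\notin G'$, and apply the reformulation in $G$ to obtain $\{[g,x]:x\in G\}=G'$. Applying the surjection $\pi$ and using $[\overline{g},\overline{x}]=\overline{[g,x]}$ gives $\{[\overline{g},\overline{x}]:\overline{x}\in G/N\}=\pi\bigl(\{[g,x]:x\in G\}\bigr)=\pi(G')=(G/N)'$. By the reformulation applied in $G/N$, the conjugacy class of $\overline{g}$ is the coset $\overline{g}\,(G/N)'$; since $\overline{g}$ ranged over all elements outside $(G/N)'$, this proves $G/N$ is a Camina group.

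I do not expect a real obstacle here: the only piece carrying content is the commutator reformulation of ``Camina'', after which everything reduces to a one-line image computation through a surjective homomorphism. The points to be careful with are purely bookkeeping: fixing the commutator convention so that $g^{x}=g\,[g,x]$ holds exactly, the remark that conjugacy classes not contained in $G'$ consist entirely of elements outside $G'$, and the equivalence that $gN$ lies outside $(G/N)'$ if and only if $g\notin G'$, which is where the hypothesis $N\subseteq G'$ is actually used.
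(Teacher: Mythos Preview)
Your proof is correct. The paper does not supply its own proof of this proposition; it is stated with a citation to \cite{Lewis1} and used as a black box, so there is no argument in the paper to compare against. Your approach via the commutator reformulation of the Camina condition is the standard one and goes through without issue; your remark that the hypothesis should be read as ``$N\trianglelefteq G$ with $N\subseteq G'$'' is the intended reading, since otherwise $G/N$ would not even be defined.
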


\begin{Prop}[Macdonald, \cite{MacDonald}]\label{lem:pcamclass}
    Let $G$ be a Camina $p-$group. Let $\gamma_2(G)=[G,G]$ and $\gamma_3(G)=[\gamma_2(G),G]$ be the second and third terms in the lower central series of $G$. Then
        \[x^G=x\gamma_2(G)\text{ if }x\in G\setminus \gamma_2(G),\]
        \[x^G=x\gamma_3(G)\text{ if }x\in \gamma_2(G)\setminus \gamma_3(G),\]
        \[x^G=\{x\} \text{ if }x\in \gamma_3(G).\]
\end{Prop}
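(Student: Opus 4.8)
The plan is to treat the three ranges of $x$ in turn, reducing quickly to the one genuinely non-trivial case. If $x\in G\setminus\gamma_2(G)=G\setminus G'$, then $x^G=xG'=x\gamma_2(G)$ is exactly the defining property of a Camina group; nothing else is needed. For the remaining ranges, recall that a Camina $p$-group has nilpotency class $2$ or $3$ (Dark and Scoppola), so $\gamma_4(G)=\{e\}$ and hence $\gamma_3(G)\le Z(G)$; thus every $x\in\gamma_3(G)$ is central and $x^G=\{x\}$. Moreover, for any $x\in\gamma_2(G)$ and $g\in G$ one has $x^g=x[x,g]$ with $[x,g]\in[\gamma_2(G),G]=\gamma_3(G)$, so $x^G\subseteq x\gamma_3(G)$ always. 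When $G$ has class $2$ this finishes everything: $\gamma_3(G)=\{e\}$ and $\gamma_2(G)=G'\le Z(G)$, so for $x\in\gamma_2(G)\setminus\gamma_3(G)$ the element $x$ is central and $x^G=\{x\}=x\gamma_3(G)$.

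It remains to treat $G$ of class $3$ and $x\in\gamma_2(G)\setminus\gamma_3(G)=G'\setminus Z(G)$, where by Proposition \ref{thm:Camina3} we know $\gamma_3(G)=Z(G)$ and must promote the inclusion $x^G\subseteq xZ(G)$ to an equality. Fix such an $x$. Since $x\in\gamma_2(G)$ and $G$ has class $3$, the map $\phi_x\colon G\to Z(G)$, $g\mapsto[x,g]$, takes values in $\gamma_3(G)=Z(G)$ and is a group homomorphism: expand $[x,gh]=[x,h]\,[x,g]^h$ and use $[x,g]^h=[x,g]$ since $[x,g]\in Z(G)$, the target being abelian. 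Put $U=\phi_x(G)\le Z(G)$; then $x^G=xU$, so it suffices to show $U=Z(G)$. Suppose $U\subsetneq Z(G)$. Then $U$ is a central, hence normal, subgroup of $G$ lying inside $G'=\gamma_2(G)$, so by Proposition \ref{prop:cammod} the quotient $\bar G:=G/U$ is again a Camina $p$-group, with $\gamma_3(\bar G)=\gamma_3(G)U/U=Z(G)/U\ne\{e\}$ and $\gamma_4(\bar G)=\gamma_4(G)U/U=\{e\}$; hence $\bar G$ has class exactly $3$, and Proposition \ref{thm:Camina3} applied to $\bar G$ yields $Z(\bar G)=\gamma_3(\bar G)=Z(G)/U$. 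But $[x,g]\in U$ for every $g\in G$, so $xU$ commutes with every $gU$ in $\bar G$, i.e.\ $xU\in Z(\bar G)=Z(G)/U$, which forces $x\in Z(G)$ — contradicting $x\in G'\setminus Z(G)$. Therefore $U=Z(G)$ and $x^G=xZ(G)=x\gamma_3(G)$, as claimed.

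The only delicate point is this final case, and it rests on two things: $\phi_x$ being a homomorphism, which genuinely uses nilpotency class $3$ (so that $[\gamma_2(G),G]$ is central); and the quotient trick, whose force comes from combining Proposition \ref{prop:cammod} (the Camina property descends to $G/U$) with Proposition \ref{thm:Camina3} (Macdonald's structure theorem, reapplied to $G/U$ to pin down its centre exactly) — valid precisely because $U\subsetneq Z(G)$ keeps $\bar G$ of class $3$. I expect that verifying $\bar G$ has class exactly $3$ and that Macdonald's proposition applies to it verbatim is the main thing to get right; the rest is formal.
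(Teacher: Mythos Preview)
The paper does not prove this proposition; it is quoted from Macdonald \cite{MacDonald} without argument, so there is no in-paper proof to compare against. Your argument is correct: the first case is the Camina definition, the third follows from $\gamma_3(G)\le Z(G)$ in class $\le 3$, and the class-$2$ instance of the middle case is trivial. For the class-$3$ middle case your quotient trick is sound --- $\phi_x$ is a homomorphism since $[\gamma_2(G),G]\le Z(G)$; the image $U$ is central, hence normal in $G$, and lies in $Z(G)\le G'$, so Proposition~\ref{prop:cammod} applies; and with $U\subsetneq Z(G)=\gamma_3(G)$ the quotient $\bar G$ has $\gamma_3(\bar G)=\gamma_3(G)/U\ne\{e\}$ and $\gamma_4(\bar G)=\{e\}$, hence class exactly $3$, so Proposition~\ref{thm:Camina3} applied to $\bar G$ gives $Z(\bar G)=\gamma_3(\bar G)=Z(G)/U$ and forces $x\in Z(G)$, the desired contradiction.

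The only point to watch is that you invoke Proposition~\ref{thm:Camina3} (also Macdonald's) to identify $Z=\gamma_3$ for both $G$ and $\bar G$. If in Macdonald's original development that structural fact is itself derived from the conjugacy-class description you are proving, the reasoning would be circular at its source. Within this paper's framework, where both statements are imported as black-box citations, your deduction of one from the other is unobjectionable; but if you intend a genuinely independent proof you should verify that $Z(G)=\gamma_3(G)$ for class-$3$ Camina $p$-groups can be established without appealing to the present proposition.
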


For $H\triangleleft G$, the pair $(G,H)$ is a \emph{Camina pair} if for all $g\in G\setminus H$, $g$ is conjugate to every element of $gH$. Note $(G,G')$ is a Camina pair if and only if $G$ is a Camina group. The following result was proven over the course of several papers \cite{Camina, Chillag1, Kuisch, Yongcai}. 

\begin{Lem}\label{lem:pair}
    Let $1< K\triangleleft G$. Then the following are equivalent:
    \begin{enumerate}
        \item $(G,K)$ is a Camina pair.
        \item If $x\in G\setminus K$, then $|C_G(x)|=|C_{G/K}(xK)|$.
        \item If $xK$ and $yK$ are conjugate and nontrivial in $G/K$, then $x$ is conjugate to $y$ in $G$.
        \item If $C_1=\{e\},C_2,\cdots , C_m$ are the conjugacy classes of $G$ contained in $K$ and $C_{m+1},\cdots, C_n$ are the conjugacy classes of $G$ outside $K$, then $C_iC_j=C_j$ for $1\leq i\leq m$ and $m+1\leq j\leq n$.
    \end{enumerate}
\end{Lem}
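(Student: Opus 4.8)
The plan is to prove the cycle of implications $(1)\Rightarrow(2)\Rightarrow(3)\Rightarrow(4)\Rightarrow(1)$, which establishes the equivalence of all four statements (the shortcut $(3)\Rightarrow(1)$ will also fall out along the way). Write $\overline{g}=gK$ for the image of $g$ in $G/K$ and $\pi\colon G\to G/K$ for the quotient map. The one observation used repeatedly is the following. For any $x\in G$ the image $\pi(x^{G})$ of the conjugacy class $x^{G}$ is the conjugacy class $\overline{x}^{\,G/K}$, so $x^{G}\subseteq\pi^{-1}\!\bigl(\overline{x}^{\,G/K}\bigr)$, and this preimage has size $|K|\cdot|\overline{x}^{\,G/K}|=|G|/|C_{G/K}(\overline{x})|$; since $|x^{G}|=|G|/|C_{G}(x)|$, the class $x^{G}$ equals the full preimage $\pi^{-1}\!\bigl(\overline{x}^{\,G/K}\bigr)$ if and only if $|C_{G}(x)|=|C_{G/K}(\overline{x})|$, and in that case $x^{G}$ is a union of cosets of $K$.

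For $(1)\Rightarrow(2)$, fix $x\in G\setminus K$, so $\overline{x}\neq\overline{e}$. For any coset $\overline{z}\in\overline{x}^{\,G/K}$ pick $z'\in x^{G}$ with $\overline{z'}=\overline{z}$; since $\overline{z}$ is nontrivial, $z'\notin K$, so the Camina-pair hypothesis gives $z'K\subseteq z'^{G}=x^{G}$. Hence $x^{G}$ contains all of $\pi^{-1}\!\bigl(\overline{x}^{\,G/K}\bigr)$, and the observation yields $|C_{G}(x)|=|C_{G/K}(\overline{x})|$. For $(2)\Rightarrow(3)$, suppose $\overline{x}$ and $\overline{y}$ are conjugate and nontrivial; replacing $x$ by a $G$-conjugate, we may assume $\overline{x}=\overline{y}$, i.e.\ $y\in xK$ and $x\notin K$. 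By $(2)$ and the observation, $x^{G}=\pi^{-1}\!\bigl(\overline{x}^{\,G/K}\bigr)$, which contains the coset $xK\ni y$, so $y$ is conjugate to $x$ in $G$. (Taking $\overline{x}=\overline{y}$ as forced by $h\in gK$, the same one-line argument gives $(3)\Rightarrow(1)$ directly.)

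For $(3)\Rightarrow(4)$, take $1\le i\le m$ and $m+1\le j\le n$. The set $C_{i}C_{j}$ is conjugation-invariant, hence a nonempty union of conjugacy classes. If $a\in C_{i}\subseteq K$ and $b\in C_{j}$, then $\overline{ab}=\overline{b}\neq\overline{e}$, so $\overline{ab}$ and $\overline{b}$ are conjugate and nontrivial in $G/K$; by $(3)$, $ab\in b^{G}=C_{j}$. Thus $C_{i}C_{j}$ is a nonempty union of conjugacy classes contained in the single class $C_{j}$, forcing $C_{i}C_{j}=C_{j}$. For $(4)\Rightarrow(1)$, let $g\in G\setminus K$ and $h\in gK$; write $h=gk$ with $k\in K$, so $h=(gkg^{-1})g$ where $a:=gkg^{-1}\in K$ because $K\trianglelefteq G$. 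Then $a$ lies in some class $C_{i}\subseteq K$ and $g$ in some class $C_{j}\subseteq G\setminus K$, so $(4)$ gives $h=ag\in C_{i}C_{j}=C_{j}=g^{G}$. Hence every element of $gK$ is conjugate to $g$, i.e.\ $(G,K)$ is a Camina pair.

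The only point requiring real care is the cardinality bookkeeping behind the shared observation and its use in $(1)\Rightarrow(2)$ and $(2)\Rightarrow(3)$: one must check that a conjugacy class of $G$ surjecting onto a conjugacy class of $G/K$ and meeting the bound $|C_{G}(x)|=|C_{G/K}(\overline{x})|$ must be a union of full $K$-cosets. Everything else is elementary manipulation of conjugacy classes and cosets, so that index computation is the place where I would be most careful to get the arithmetic of $[G:K]$, $|x^{G}|$, and $|\overline{x}^{\,G/K}|$ exactly right.
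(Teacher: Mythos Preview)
Your proof is correct. The cycle $(1)\Rightarrow(2)\Rightarrow(3)\Rightarrow(4)\Rightarrow(1)$ works as written: the key counting observation that $x^{G}=\pi^{-1}(\overline{x}^{\,G/K})$ if and only if $|C_{G}(x)|=|C_{G/K}(\overline{x})|$ is exactly what is needed to link (1), (2), and (3), and the arguments for $(3)\Rightarrow(4)$ and $(4)\Rightarrow(1)$ are clean elementary manipulations. The one place worth a second glance is $(3)\Rightarrow(4)$, where you use that $C_{i}C_{j}$ is a conjugation-invariant nonempty subset of the single class $C_{j}$ and hence must equal $C_{j}$; this is fine, but it is worth stating explicitly that $C_{i}C_{j}\neq\varnothing$ and that a nonempty union of classes contained in an irreducible class must be that class.

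As for comparison with the paper: the paper does not actually prove this lemma. It is quoted as a known result assembled from several sources in the literature (Camina, Chillag--Macdonald, Kuisch--van der Waall, and Yongcai), with no argument given. So your self-contained cycle of implications is genuinely additional content relative to the paper. If anything, your route is the standard one and is more informative than a bare citation, since it makes transparent why the centralizer-index condition (2) is equivalent to the coset-absorption condition (4) via the intermediate ``lifting conjugacy'' statement (3).
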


\section{Terwilliger Algebras}

Definitions of Terwilliger algebras and Schur rings can be found in \cite{Bastian1}. One particular definition we remind the reader of is that given a finite group $G$ with conjugacy classes $C_0,C_1,\cdots, C_d$, we can define $R_i=\{(x,y)\in G\times G\colon yx^{-1}\in C_i\}$. Then $\mathcal{A}=(G,\{R_i\}_{0\leq i\leq d})$ is an association scheme called the \emph{group association scheme}. We let $T(G)$ denote the Terwilliger algebra for the group association scheme of group $G$. For any $x\in G$, we define the primary-T(x) module in \cite{Bastian1}. This definition will be used in the next important definition.

Recall that for any $x\in G$, the Terwilliger algebra $T(x)$ is \emph{almost commutative} if every non-primary irreducible $T(x)-$module is $1-$ dimensional.

\begin{Thm}[Tanaka, \cite{Tanaka}]\label{thm:tanaka}  Let $\mathcal{A}=(\Omega,\{R_i\}_{0\leq i\leq d})$ be a commutative association scheme. Let $\mathcal{T}(x)$ be the Terwilliger algebra of $\mathcal{A}$ for some $x\in \Omega$. The following are equivalent:
    \begin{enumerate}[(i)]
        \item Every non-primary irreducible $\mathcal{T}(x)-$module is $1-$ dimensional for some $x\in \Omega$.
        \item Every non-primary irreducible $\mathcal{T}(x)-$module is $1-$ dimensional for all $x\in \Omega$.
        \item The intersection numbers of $\mathcal{A}$ have the property that for all distinct $h,i$ there is exactly one $j$ such that $p_{ij}^h\neq 0$ $(0\leq h,i,j\leq d)$.
        \item The Krein parameters of $\mathcal{A}$ have the property that for all distinct $h,i$ there is exactly one $j$ such that $q_{ij}^h\neq 0$ $(0\leq h,i,j\leq d)$.
        \item $\mathcal{A}$ is a wreath product of association schemes $\mathcal{A}_1,\mathcal{A}_2,\cdots, \mathcal{A}_n$ where each $\mathcal{A}_i$ is either a $1-$class association scheme or the group scheme of a finite abelian group.
    \end{enumerate} 
Moreover, a Terwilliger algebra satisfying these equivalent conditions is triply regular.
\end{Thm}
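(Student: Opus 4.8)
We sketch a proof. Fix $x\in\Omega$ and write $\mathcal{T}=\mathcal{T}(x)$; let $M=\langle A_0,\dots,A_d\rangle$ be the Bose--Mesner algebra with primitive idempotents $E_0,\dots,E_d$, let $E_i^*=E_i^*(x)$ be the dual idempotents, and set $M^*=\langle E_0^*,\dots,E_d^*\rangle$, so $\mathcal{T}=\langle M,M^*\rangle$. Two facts drive the argument. First, for triple products $E_a^*A_kE_b^*\neq 0$ if and only if $p_{b,k^\top}^{a}\neq 0$, where $k^\top$ indexes the transpose of $R_k$; since $k\mapsto k^\top$ permutes the indices, condition (iii) says precisely that for each pair of distinct indices $a,b$ there is exactly one $k$ with $E_a^*A_kE_b^*\neq 0$. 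Second, $\mathcal{T}$ is semisimple (being closed under conjugate transpose over $\mathbb{C}$), so writing $V=\mathbb{C}^{\Omega}\cong\bigoplus_\lambda L_\lambda^{\oplus m_\lambda}$ for its decomposition into isotypic components one has $\dim\bigl(E_a^*\mathcal{T}E_b^*\bigr)=\sum_\lambda(\dim E_a^*L_\lambda)(\dim E_b^*L_\lambda)$, in which the primary module $L_0=M\hat x$ contributes $\dim E_a^*L_0=1$ for every $a$.

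These two facts already yield $(i)\Rightarrow(iii)$. If (iii) fails, choose distinct $a,b$ and $k_1\neq k_2$ with $E_a^*A_{k_1}E_b^*$ and $E_a^*A_{k_2}E_b^*$ both nonzero; since $R_{k_1}$ and $R_{k_2}$ are disjoint these matrices are linearly independent, so $\dim(E_a^*\mathcal{T}E_b^*)\geq 2$. But if every non-primary irreducible $\mathcal{T}$-module were $1$-dimensional, each would be supported on a single index and hence contribute $0$ to $\dim(E_a^*\mathcal{T}E_b^*)$ for the distinct pair $a,b$, forcing $\dim(E_a^*\mathcal{T}E_b^*)=1$, a contradiction. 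As $(ii)\Rightarrow(i)$ is trivial, it remains to prove $(iii)\Rightarrow(ii)$ and $(iii)\Leftrightarrow(iv)\Leftrightarrow(v)$. I would get $(iii)\Leftrightarrow(iv)$ from the formally dual version of the analysis of $(v)$ below --- replacing adjacency matrices and their ordinary products by primitive idempotents and their Hadamard products, the same reasoning gives $(iv)\Leftrightarrow(v)$; equivalently, a scheme of type $(v)$ admits a formal dual again of type $(v)$, under which intersection numbers and Krein parameters are exchanged.

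The core --- and the step I expect to be hardest --- is $(iii)\Rightarrow(v)$. Call $I\subseteq\{0,\dots,d\}$ a \emph{closed subset} if $0\in I$, $I$ is closed under transpose indices, and $\bigcup_{i\in I}R_i$ is an equivalence relation; a chain $\{0\}=I_0\subsetneq I_1\subsetneq\dots\subsetneq I_n=\{0,\dots,d\}$ of closed subsets is exactly a presentation of $\mathcal{A}$ as an iterated wreath product $\mathcal{A}_1\wr\dots\wr\mathcal{A}_n$ with $\mathcal{A}_k$ the quotient scheme carried by $I_k/I_{k-1}$. So the plan is: \emph{(a)} show (iii) forces the closed subsets to be linearly ordered by inclusion --- given incomparable closed subsets $I,J$, take $i\in I\setminus J$, $j\in J\setminus I$ and extract from $A_iA_j$ two distinct adjacency matrices lying ``above'' both $i$ and $j$, contradicting uniqueness in (iii) (the bookkeeping of transposes and the identity relation is the delicate part, and it may be cleanest to prove directly that the family of closed subsets containing a fixed index is a chain); \emph{(b)} pass to a maximal chain of closed subsets, obtaining the wreath factorization; \emph{(c)} observe that each $\mathcal{A}_k$ is primitive (a one-step quotient in a maximal chain), that (iii) descends to quotients, and that a \emph{primitive} commutative scheme satisfying (iii) is either a $1$-class scheme or the group scheme of a finite abelian group --- for a primitive scheme (iii) forces, by the same ``at most one surviving product'' count, either a single nontrivial relation (the $1$-class case) or every nontrivial valency equal to $1$, whence the scheme is thin and hence the group scheme of a finite abelian group.

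Finally $(iii)\Rightarrow(ii)$ and the ``moreover'' clause, via the wreath structure just obtained. From the hierarchical shape of $\mathcal{A}=\mathcal{A}_1\wr\dots\wr\mathcal{A}_n$ one reads off that $\mathcal{A}$ is triply regular, the triple-intersection numbers depending only on the mutual relations of the three points; triple regularity then implies $\mathcal{T}(x)$ is spanned by the products $E_a^*A_kE_b^*$. Feeding (iii) back into this spanning set gives $\dim(E_a^*\mathcal{T}(x)E_b^*)\leq 1$ for distinct $a,b$, while the wreath structure (each factor having a full-matrix-algebra or commutative-corner Terwilliger algebra) makes each corner $E_a^*\mathcal{T}(x)E_a^*$ commutative; by the dimension identity and the standard bijection between irreducible $\mathcal{T}(x)$-modules meeting $E_a^*V$ and irreducible $E_a^*\mathcal{T}(x)E_a^*$-modules, every non-primary irreducible is supported on one index with a $1$-dimensional constituent there, hence $1$-dimensional. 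Since (iii) does not involve $x$, this is (ii), and triple regularity is the ``moreover'' assertion. Alternatively one may induct on $n$, using the known Terwilliger algebra of one wreath step $\mathcal{A}'\wr\mathcal{A}''$ together with the base cases: the Terwilliger algebra of the group scheme of a finite abelian group is a full matrix algebra, and that of a $1$-class scheme is the primary module plus copies of a single $1$-dimensional module.
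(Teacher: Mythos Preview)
The paper does not prove this theorem; it is stated with attribution to Tanaka and cited without proof, then used as a black box (for instance, to deduce triple regularity of $T(G)$ in Propositions~\ref{cor:p2dim} and~\ref{cor:p3dim}). There is therefore no ``paper's own proof'' to compare against.

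Your sketch is a plausible outline of how Tanaka's argument might go, and the logical architecture---reducing to the wreath characterization $(v)$, then reading off $(ii)$ and triple regularity from the hierarchical structure---is sound in spirit. That said, several steps are genuinely incomplete as written. In $(iii)\Rightarrow(v)$, the assertion that incomparable closed subsets $I,J$ yield two distinct adjacency matrices in the expansion of $A_iA_j$ contradicting uniqueness is not obviously true without further argument: you need to control \emph{which} index $h$ witnesses the failure of $(iii)$, and the ``bookkeeping'' you flag is where the real work lies. Your primitive-case dichotomy (``either a single nontrivial relation or every nontrivial valency equal to $1$'') also needs justification; it is not immediate from $(iii)$ alone that a primitive scheme with more than one nontrivial class must be thin. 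Finally, the claim that triple regularity follows ``from the hierarchical shape'' of a wreath product is asserted rather than argued, and this is exactly the step the present paper relies on. If you intend this as a self-contained proof rather than a roadmap, each of these points would need to be filled in---or, as the paper does, you can simply cite Tanaka.
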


Definitions of intersection numbers and Krein parameters are given in \cite{Bastian1}. For the definition of the wreath product of association schemes see \cite{WreathProd}. We note that a Terwilliger algebra for a commutative association scheme is almost commutative if it satisfies the five equivalent conditions in Theorem \ref{thm:tanaka}.

In \cite{Bastian1}, we found those groups for which $T(G)$ is almost commutative. We mention this result here.

\begin{Thm}[\cite{Bastian1}]\label{thm:acclassify}
        Let $G$ be a finite group. Then $T(G)$ is almost commutative if and only if $G$ is isomorphic to one of the following groups
        \begin{itemize}
        \item A finite abelian group
        \item The group $(\Z_3)^2\rtimes Q_8$, where $Q_8$ is the quaternions of order $8$
        \item $(\Z_p)^r\rtimes \Z_{p^{r-1}}$ for some prime $p$ and $r>0$.
        \item A non-abelian Camina $p-$group, for some prime $p$.
    \end{itemize}
    \end{Thm}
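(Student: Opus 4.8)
\noindent The plan is to reduce the statement to Theorem~\ref{thm:tanaka}. The Bose--Mesner algebra of the group association scheme $\mathcal{A}(G)$ is the centre $\mathcal{Z}(\C[G])$, which is commutative, so $\mathcal{A}(G)$ is a commutative association scheme and Theorem~\ref{thm:tanaka} applies: $T(G)$ is almost commutative if and only if $\mathcal{A}(G)$ satisfies the equivalent conditions there. I would use condition~(v) (being a wreath product of $1$-class schemes and group schemes of finite abelian groups) for the ``if'' direction and condition~(iii) for the ``only if'' direction. A short computation with the class-algebra structure constants shows that $p_{ij}^h\neq 0$ precisely when the conjugacy class $C_j$ is contained in $C_hC_i^{-1}$, while the cases $h=0$ and $i=0$ impose no restriction; the number of admissible $j$ is then the number of classes constituting $C_hC_i^{-1}$, so condition~(iii) becomes the group-theoretic statement
\[(\star)\qquad CD^{-1}\text{ is a single conjugacy class whenever }C\neq D\text{ are nontrivial conjugacy classes of }G.\]

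For the ``if'' direction I would verify $(\star)$ in each family, or equivalently display the wreath decomposition. When $G$ is abelian, $\mathcal{A}(G)$ is itself the group scheme of a finite abelian group. When $G=(\Z_p)^r\rtimes\Z_{p^r-1}$, the complement acts regularly on $(\Z_p)^r\setminus\{e\}$ and, $(G,(\Z_p)^r)$ being a Camina pair, every conjugacy class outside the kernel is a full coset of it; thus within each coset of $(\Z_p)^r$ all distinct pairs lie in one relation while between cosets the relation is read off $G/(\Z_p)^r\cong\Z_{p^r-1}$, so $\mathcal{A}(G)$ is the wreath product of the $1$-class scheme on $p^r$ points with $\mathcal{A}(\Z_{p^r-1})$. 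When $G=(\Z_3)^2\rtimes Q_8$, the same computation gives the wreath product of the $1$-class scheme on $9$ points with $\mathcal{A}(Q_8)$, and since $Q_8$ is a Camina $2$-group, $\mathcal{A}(Q_8)$ is itself a wreath product of abelian group schemes. When $G$ is a non-abelian Camina $p$-group, Proposition~\ref{lem:pcamclass} gives the conjugacy classes (the singletons inside $\gamma_3(G)$, the cosets of $\gamma_3(G)$ inside $\gamma_2(G)$, the cosets of $\gamma_2(G)$ in $G$) and Propositions~\ref{thm:Camina3} and~\ref{prop:cam2center} give that $\gamma_3(G)$, $\gamma_2(G)/\gamma_3(G)$ and $G/\gamma_2(G)$ are elementary abelian; one then checks, with Lemma~\ref{lem:pair}(4) supplying the wreath compatibility, that $\mathcal{A}(G)$ is the corresponding two- or three-fold wreath product of abelian group schemes. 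Hence condition~(v) holds in every case.

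For the ``only if'' direction I would induct on $|G|$, the abelian groups being the base case. Suppose $G$ is non-abelian with $\mathcal{A}(G)$ of the wreath form in~(v). Its bottom factor corresponds to a normal subgroup $N\trianglelefteq G$ for which the restriction of $\mathcal{A}(G)$ to $N$ is a $1$-class scheme or an abelian group scheme, and the wreath compatibility --- which is exactly condition~(4) of Lemma~\ref{lem:pair} --- makes $(G,N)$ a Camina pair. A short argument, in the spirit of Lemma~\ref{lem:dercam}, then shows $N\le G'$; moreover, since $(G,N)$ is a Camina pair, Lemma~\ref{lem:pair}(3) gives that the quotient scheme equals $\mathcal{A}(G/N)$, which is again of the wreath form, so $G/N$ lies in one of the four families by the inductive hypothesis. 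If the restriction of $\mathcal{A}(G)$ to $N$ is an abelian group scheme, every conjugacy class inside $N$ is a singleton, so $N\le Z(G)$ and in fact $N\cong\Z_p$; combining $N\le Z(G)\cap G'$ with the Camina-pair property and the structure of $G/N$ --- and using $(\star)$ to exclude the configurations in which $G$ becomes essentially a nontrivial direct product --- one finds that $G$ is abelian or a Camina $p$-group. If the restriction of $\mathcal{A}(G)$ to $N$ is a $1$-class scheme, then $N\setminus\{e\}$ is a single $G$-conjugacy class, forcing $N\cong(\Z_p)^r$ to be minimal normal with $G$ transitive on $N\setminus\{e\}$; together with the Camina-pair property this forces $C_G(N)=N$, making $G$ a Frobenius group with kernel $N$ and complement $G/N$ acting regularly on $N\setminus\{e\}$, so $|G/N|=p^r-1$. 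Since $G/N$ is at once a Frobenius complement and a member of the four families, it must be the cyclic Singer group $\Z_{p^r-1}$ or the quaternion group $Q_8$ with $p=3$, $r=2$, yielding the two exceptional families.

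I expect the main obstacle to be the $1$-class case of the induction: proving $C_G(N)=N$ (so that $G$ is genuinely a Frobenius group) and then eliminating every transitive linear configuration other than the Singer cycle and $Q_8$ acting on $(\Z_3)^2$. This requires the classification of finite sharply transitive linear groups (equivalently, of finite nearfields) together with the inductive hypothesis to discard the exceptional nearfield cases. A secondary difficulty is the central case, where the bookkeeping with conjugacy-class sizes needed to decide exactly when a central extension of an abelian or Camina $p$-group again satisfies $(\star)$ is delicate.
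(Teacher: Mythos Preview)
The paper does not prove this theorem: it is quoted from the companion paper \cite{Bastian1} and stated here without proof, so there is no argument in the present paper to compare your proposal against.

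That said, a few remarks on the proposal itself are in order. Your translation of Tanaka's condition~(iii) into the class-algebra statement $(\star)$ is correct, and the ``if'' direction via the explicit wreath decompositions is essentially complete; indeed the present paper uses exactly this wreath structure (implicitly, through Lemma~\ref{lem:pair}(4) and Proposition~\ref{lem:pcamclass}) when computing dimensions in Propositions~\ref{cor:p2dim} and~\ref{cor:p3dim}.

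For the ``only if'' direction, your inductive scheme is sound in outline but the two difficulties you flag are genuine and you have not indicated how to resolve them. In the central case your sentence ``one finds that $G$ is abelian or a Camina $p$-group'' hides a real case analysis: when $G/N$ is one of the Frobenius families you must actually derive a contradiction, and the clean way to do this is to observe (as you in effect show) that $G$ is then a Camina group with nontrivial centre, and then invoke the Dark--Scoppola dichotomy (Camina groups are Frobenius or $p$-groups) to force $G$ to be a $p$-group, which is incompatible with $|G/N|$. Your claim ``in fact $N\cong\Z_p$'' is unmotivated and unnecessary. In the $1$-class case, the step $C_G(N)=N$ does \emph{not} follow from the Camina-pair condition alone (for instance $(Q_8,Z(Q_8))$ is a Camina pair with $C_G(N)=G$), so you must use the additional hypothesis that $N\setminus\{e\}$ is a single $G$-class together with the inductively known structure of $G/N$; this, and the subsequent elimination of transitive linear groups other than the Singer cycle and $Q_8$ on $(\Z_3)^2$, is where the real work lies and your proposal does not yet supply it.
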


\section{Wedderburn Decomposition for Camina p-groups}
In \cite{Bastian1}, we found the primitive idempotents for $T(G)$ for the first three of the four types of groups in Theorem \ref{thm:acclassify}. We shall now determine the primitive idempotents for $T(G)$ along with the Wedderburn decomposition for all Camina $p-$groups $G$. From this point on let $G$ be a Camina $p-$group. We also let $I_n$ denote the $n\times n$ identity matrix and $J_n$ denote the $n\times n$ all ones matrix. 

We shall consider two cases based on the nilpotency class of $G$, proving the results for both at the same time where possible. We start with the dimensions of the algebras.

 \begin{Prop}\label{cor:p2dim}
            Let $G$ have nilpotency class $2$, where $|G|=p^n$ and $|Z(G)|=p^k$. Then $\dim T(G)=(p^{n-k}+p^k-1)(p^{n-k}+p^k-2)+p^{n}$.
        \end{Prop}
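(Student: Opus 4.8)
The plan is to compute $\dim T(G)$ as the dimension of the centralizer algebra of a certain permutation representation, using the standard fact that $T(x)$ for the group association scheme decomposes according to how $G$ acts on itself by conjugation together with the left–right "position" structure. Concretely, fixing the base point $x=e$, the Terwilliger algebra $T(G)$ is spanned by the adjacency matrices $A_i$ of the group association scheme together with the dual idempotents $E_j^*$ that project onto the conjugacy classes; a convenient model is that $T(G)$ is the algebra of $G\times G$ matrices generated by these, and its dimension equals $\sum_{i,j} (\text{number of nonzero blocks of }A_i\text{ from }C_j\text{ to }C_k)$ counted appropriately. For a Camina $p$-group of class $2$ we have the very rigid conjugacy-class structure from Proposition~\ref{lem:pcamclass}: the classes are $\{e\}$, the singletons in $Z(G)\setminus\{e\}$ (there are $p^k-1$ of them), and the full cosets of $G'=Z(G)$ outside $Z(G)$ (there are $p^{n-k}-1$ of them, each of size $p^k$). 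So the "visible" combinatorial data is governed by the two small parameters $p^{n-k}$ and $p^k$.

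First I would set up the block decomposition of $T(G)$ with respect to the dual idempotents $E_j^*$, so that $T(G)$ becomes a subalgebra of $\bigoplus_{j,k}\mathrm{Mat}_{|C_j|\times|C_k|}$, and identify exactly which blocks $E_j^* A_i E_k^*$ are nonzero; by Lemma~\ref{lem:pair} applied to the Camina pair $(G,G')$ (and $(G,Z(G))$ when relevant), products $C_i C_j$ are single classes whenever $C_i$ lies inside $G'$, which pins down the intersection numbers $p_{ij}^h$ and hence which relation matrices connect which pairs of classes. Second, I would count, block by block: the $1\times 1$ blocks among the central singletons contribute a $(p^k-1)\times(p^k-1)$-type commutative piece, the blocks between central singletons and the big cosets, and between two big cosets, contribute the rank of the relevant span of relation matrices restricted there. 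Summing these up, the "primary-like" part coming from the $p^{n-k}+p^k-1$ relevant classes and their pairwise interactions should give the first term $(p^{n-k}+p^k-1)(p^{n-k}+p^k-2)$, and then the remaining contribution — the part of each large diagonal block $E_j^* T(G) E_j^*$ that is not already accounted for, essentially a copy of the group algebra structure of $G'=Z(G)$ acting inside each coset — should contribute exactly $p^n$. I would cross-check the total against Theorem~\ref{thm:tanaka} (triple regularity) and against the small cases (e.g. extraspecial groups, where $k=1$) to make sure the formula $\dim T(G)=(p^{n-k}+p^k-1)(p^{n-k}+p^k-2)+p^n$ is correctly normalized.

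The main obstacle I expect is getting the off-diagonal block ranks right: while the class algebra (Schur ring) structure is forced by the Camina condition, the Terwilliger algebra sees more than the Bose–Mesner algebra, and one must verify that within each block $E_j^* T(G) E_k^*$ the relation matrices $A_i$ together with the dual idempotents do not collapse or, conversely, do not span more than expected — in particular that for the big cosets the interaction is "rank one per relation" on the nose, which is where the Camina property $C_i C_j = C_j$ for $C_i\subseteq G'$ does the real work. A secondary subtlety is uniformly handling both class-$2$ subcases implicit here (whether $Z(G)=G'$ or $Z(G)<G'$ — though for class $2$ one has $G'\le Z(G)$, and combined with Lemma~\ref{lem:dercam} this forces $G'=Z(G)$, so $|G'|=p^k$), so that the same block count applies; I would state this reduction explicitly at the start so the counting argument is clean. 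Once the nonzero blocks and their dimensions are tabulated, the proof reduces to the arithmetic identity summing the block dimensions to the claimed closed form.
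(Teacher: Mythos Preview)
Your proposal circles the right idea but misses the key simplification that the paper exploits. The paper does not compute block ranks at all: it invokes Theorem~\ref{thm:tanaka} to conclude that $T(G)$ is triply regular, and for a triply regular scheme one has directly
\[
\dim T(G)=\bigl|\{(i,j,h):p_{ij}^h\neq 0\}\bigr|.
\]
So the entire computation reduces to counting nonzero intersection numbers, which is pure combinatorics on the class structure you already identified. The paper then splits into the cases $i\neq h$ (the almost-commutative condition gives a unique $j$, contributing $(p^{n-k}+p^k-1)(p^{n-k}+p^k-2)$), $i=h$ with $C_i\subseteq Z(G)$ (only $j=0$ works, contributing $p^k$), and $i=h$ with $C_i\not\subseteq Z(G)$ (every central $C_j$ works, contributing $p^k(p^{n-k}-1)$); summing gives the formula.

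Your plan to tabulate $\dim E_j^*T(G)E_k^*$ block by block is in principle equivalent, since for a triply regular scheme the nonzero $E_h^*A_jE_i^*$ form a basis of $T(G)$, but you treat triple regularity only as a cross-check rather than the engine of the proof. The ``main obstacle'' you flag --- verifying that the relation matrices restricted to each block neither collapse nor over-span --- is precisely what triple regularity settles for free, so the rank computations you anticipate are unnecessary. If you want to make your outline into a proof, promote the triple-regularity consequence from Theorem~\ref{thm:tanaka} to the first line and then count triples $(i,j,h)$ directly; your identification of the class structure and of $G'=Z(G)$ is correct and is exactly what is needed for that count.
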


\begin{proof}
By Theorem \ref{thm:acclassify}, $T(G)$ is almost commutative. Then by Theorem \ref{thm:tanaka}, $T(G)$ is triply regular. Therefore, $\dim T(G)=|\{(i,j,h)\colon p_{ij}^h\neq 0\}|$. As $G$ has nilpotency class $2$, its lower central series is $\{e\}\triangleleft G_1\triangleleft G$. We have $G_1\leq Z(G)$ and $G/G_1\leq Z(G/G_1)$. Note that $G/G_1$. Thus, $G'\leq G_1\leq Z(G)$. So $G'\leq Z(G)$. By Lemma \ref{lem:dercam}, $Z(G)\leq G'$. So $G'=Z(G)$.

    As $G$ is a Camina group the conjugacy classes of $G$ are those inside of $G'=Z(G)$ and the non-trivial cosets of $G/G'$. As $|G/G'|=p^{n-k}$ there are $p^{n-k}-1$ conjugacy classes that are non-trivial cosets of $G/G'$. Since $G'=Z(G)$ the classes of $G$ in $G'$ are just singleton elements. There are $p^k$ of these. Then $G$ has $p^{n-k}-1+p^k$ classes. We shall denote these classes as $C_i$. We consider cases for fixed $i,h$ to find the number of nonzero constants $p_{ij}^h$.

    {\bf Case 1: } $i\neq h$. As $T(G)$ is almost commutative, by Theorem \ref{thm:acclassify} there is a unique $j$ such that $p_{ij}^h\neq 0$. Thus there is a unique $j$ such that $p_{ij}^h\neq 0$. We have $p^{n-k}+p^k-1$ choices for $C_i$. We then have $p^{n-k}+p^k-2$ choices for $C_h$. Therefore, in total we have $(p^{n-k}+p^k-1)(p^{n-k}+p^k-2)$ triples $(i,j,h)$ with $p_{ij}^h\neq 0$ in this case.

    {\bf Case 2: }$i=h$ and $C_i=\{g\}\subseteq Z(G)$. For any $C_j\not\subseteq Z(G)$ we have $C_j=xZ(G)$, $x\not\in Z(G)$. Then $\overline{C_i}\cdot\overline{C_j}=\overline{gxZ(G)}=\overline{xZ(G)}$, which does not contain $g\in Z(G)$. Thus, $p_{ij}^h=0$ in this case. If $C_j=\{x\}\subseteq Z(G)$, then $\overline{C_i}\cdot\overline{C_j}=gx$. This equals $\overline{C_h}$ if and only if $x=e$. Thus, for $i=h$, there is a unique $C_j=\{e\}$ such that $p_{ij}^h\neq 0$. We have $p^k$ choices for $i=h$, so there are a total of $p^k$ triples $(i,j,h)$ with $p_{ij}^h\neq 0$ in this case.

    {\bf Case 3: }$i=h$ with $C_i\not\subseteq Z(G)$. Then $C_i=gZ(G)$, $g\not\in Z(G)$. For any $C_j\not\in Z(G)$ we have $C_j=xZ(G)$ for $x\not\in Z(G)$. Then $\overline{C_i}\cdot\overline{C_j}=|Z(G)|\overline{gxZ(G)}$. As $g\in gZ(G)$, for $p_{ij}^h\neq 0$ we must have $gxZ(G)=gZ(G)$. Then $xZ(G)=Z(G)$, which is false. Thus, $p_{ij}^h=0$ for all $C_j\not\subseteq Z(G)$. For any $C_j\subseteq Z(G)$ we have $C_j=\{x\}\subseteq Z(G)$. Then $\overline{C_i}\cdot\overline{C_j}=\overline{gxZ(G)}=\overline{gZ(G)}=\overline{C_h}$. So $p_{ij}^h=1$ in this case. Hence, for any $C_j\subseteq Z(G)$ we have $p_{ij}^h\neq 0$. We have $p^k$ choices for $C_j$ for each $C_i=C_h$. There are $p^{n-k}-1$ choices for $C_i$. Thus, in total we have $p^k(p^{n-k}-1)$ triples $(i,j,h)$ such that $p_{ij}^h\neq 0$ in this case.

    Having considered every possible case we obtain:
    \[\dim T(G)=(p^{n-k}+p^k-1)(p^{n-k}+p^k-2)+p^k+p^{k}(p^{n-k}-1)=(p^{n-k}+p^k-1)(p^{n-k}+p^k-2)+p^n.\qedhere\]
\end{proof}

\begin{Prop}\label{cor:p3dim}
    Let $G$ have nilpotency class $3$. Suppose that $|Z(G)|=p^k$, $[G\colon G']=p^{2n}$, and $[G'\colon Z(G)]=p^n$. Then 
    \[\dim T(G)=3p^{k+n}+3p^{k+2n}+p^{2k}-6p^k+p^{4n}+3p^{3n}-5p^{2n}-6p^n+7.\]
\end{Prop}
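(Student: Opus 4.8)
The plan is to follow the template of Proposition~\ref{cor:p2dim}. By Theorem~\ref{thm:acclassify} the algebra $T(G)$ is almost commutative, so by Theorem~\ref{thm:tanaka} it is triply regular and hence $\dim T(G)=|\{(i,j,h)\colon p_{ij}^h\neq 0\}|$. To count these triples I first record the conjugacy classes of $G$. Since $G$ has nilpotency class $3$, Proposition~\ref{thm:Camina3} gives the lower central series $\{e\}<Z(G)<G'<G$ with $[G\colon G']=p^{2n}$ and $[G'\colon Z(G)]=p^n$, and Proposition~\ref{lem:pcamclass} then identifies the classes as: the $p^k$ singletons $\{g\}$ with $g\in Z(G)$; the $p^n-1$ nontrivial cosets of $Z(G)$ lying inside $G'$; and the $p^{2n}-1$ nontrivial cosets of $G'$ in $G$. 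Thus $G$ has $N:=p^{2n}+p^n+p^k-2$ conjugacy classes.

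I then split the count according to whether $i\neq h$ or $i=h$. When $i\neq h$, condition~(iii) of Theorem~\ref{thm:tanaka} supplies a unique $j$ with $p_{ij}^h\neq 0$, so this case contributes $N(N-1)$ triples. When $i=h$, I treat three sub-cases according to whether $C_i\subseteq Z(G)$, $C_i\subseteq G'\setminus Z(G)$, or $C_i\subseteq G\setminus G'$, and in each I determine for which classes $C_j$ one has $p_{ij}^i\neq 0$, i.e.\ for which $C_j$ the class $C_i$ appears in the support of the setwise product $C_iC_j$. The only facts needed are that $Z(G)$ and $G'$ are normal, so that a product of two $Z(G)$-cosets (resp.\ $G'$-cosets) is again one, together with $Z(G)\leq G'$ (Lemma~\ref{lem:dercam}); as in Proposition~\ref{cor:p2dim} it is harmless to work with $\overline{C_i}\cdot\overline{C_j}$ directly, since replacing $C_j$ by its inverse class is a bijection and all the coset products in question commute. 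Working through the nine combinations of class types shows: in the first sub-case each of the $p^k$ classes admits only $C_j=\{e\}$; in the second each of the $p^n-1$ classes admits exactly the $p^k$ classes $C_j\subseteq Z(G)$; and in the third each of the $p^{2n}-1$ classes admits exactly the $p^k+p^n-1$ classes $C_j\subseteq G'$.

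Summing the four contributions gives
\[
\dim T(G)=N(N-1)+p^k+p^k(p^n-1)+(p^{2n}-1)(p^k+p^n-1).
\]
Substituting $N=p^{2n}+p^n+p^k-2$ and expanding --- most transparently by writing $a=p^{2n}$, $b=p^n$, $c=p^k$, so that the right-hand side becomes $a^2+b^2+c^2+3ab+3ac+3bc-6a-6b-6c+7$ --- recovers the stated formula. The only real work is the bookkeeping in the $i=h$ case; the one point demanding care is that some coset products accidentally fall inside $G'$ or collapse to $\{e\}$ and thereby split into several conjugacy classes, but in exactly those sub-cases the target class $C_i$ is not among them, so the count of admissible $j$ is unaffected.
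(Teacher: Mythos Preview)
Your proposal is correct and follows essentially the same approach as the paper: both use triple regularity to reduce to counting triples $(i,j,h)$ with $p_{ij}^h\neq 0$, handle the off-diagonal case $i\neq h$ via Theorem~\ref{thm:tanaka}(iii), and split the diagonal case $i=h$ into the same three sub-cases according to whether $C_i$ lies in $Z(G)$, $G'\setminus Z(G)$, or $G\setminus G'$, arriving at the identical contributions $p^k$, $p^k(p^n-1)$, and $(p^{2n}-1)(p^n+p^k-1)$. Your substitution $a=p^{2n}$, $b=p^n$, $c=p^k$ is a nice touch that makes the final expansion cleaner than the paper's direct computation.
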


\begin{proof}
    By Theorems \ref{thm:tanaka} \ref{thm:acclassify}, $T(G)$ is triply regular. Therefore, $\dim T(G)=|\{(i,j,h)\colon p_{ij}^h\neq 0\}|$. We shall count the number of triples such that $p_{ij}^h\neq 0$. By Proposition \ref{lem:pcamclass} the class of $x\in G$ is one of the following:
    \[x^G=xG'\ \text{ if } x\in G\setminus G',\ \ \ \ \ x^G=xZ(G)\ \text{ if }x\in G'\setminus Z(G), \ \ \ \ \ x^G=\{x\}\ \text{ if }x\in Z(G).\]
    Since $|Z(G)|=p^k$, $[G\colon G']=p^{2n}$, and $[G'\colon Z(G)]=p^n$, there are $p^k$ conjugacy classes with $x\in Z(G)$, $p^{n}-1$ conjugacy classes with $x\in G'\setminus Z(G)$, and $p^{2n}-1$ conjugacy classes with $x\in G\setminus G'$. This gives a total of $p^{2n}+p^n+p^k-2$ conjugacy classes. We shall denote these as $C_i$. We consider fixed $i,h$ and find the number of triples $(i,j,h)$ such that $p_{ij}^h\neq 0$. We consider cases:

    {\bf Case 1: } $i\neq h$. As $T(G)$ is almost commutative by Theorem \ref{thm:acclassify}, there exists a unique $j$ such that $p_{ij}^h\neq 0$. We have $p^{2n}+p^n+p^k-2$ choices for $C_i$. We then have $p^{2n}+p^n+p^k-3$ choices for $C_h$. Therefore, we have $(p^{2n}+p^n+p^k-2)(p^{2n}+p^n+p^k-3)$ triples $(i,j,h)$ with $p_{ij}^h\neq 0$ if $i\neq h$.

    {\bf Case 2: } $i=h$, with $C_i=\{x\}\subseteq Z(G)$. For $C_j\subseteq G'\setminus Z(G)$, we have $C_j=yZ(G)$ for some $y\in G'\setminus Z(G)$. Then $\overline{C_i}\cdot\overline{C_j}=\overline{yZ(G)}$. Note $x\not\in yZ(G)$ as $x\in Z(G)$. So $p_{ij}^h=0$ in this case. If $C_j=yG'\subseteq G\setminus G'$, then $\overline{C_i}\cdot\overline{C_j}=\overline{yG'}$. Now $x\not\in yG'$, so $p_{ij}^h=0$ in this case. Then the only time when possibly $p_{ij}^h\neq 0$ is if $C_j=\{y\}\subseteq Z(G)$. Then $\overline{C_i}\cdot\overline{C_j}=xy$. This contains $C_h=\{x\}$ if and only if $xy=x$, if and only if $y=e$. Thus, given any $i=h$ with $C_i\subseteq Z(G)$ there is a single triple, namely $(i,0,i)$ such that $p_{ij}^h\neq 0$. We have $p^k$ choices for $C_i=C_h$. This gives a total of $p^k$ triples with $p_{ij}^h\neq 0$.

    {\bf Case 3: }$i=h$ with $C_i=xZ(G)$, $x\in G'\setminus Z(G)$. If $C_j\subseteq G\setminus G'$, then $C_j=yG'$ for $y\in G\setminus G'$. Then $\overline{C_i}\cdot\overline{C_j}=\overline{yG'}$. Now $xZ(G)\not\subseteq yG'$, so $p_{ij}^h=0$ in this case. If $C_j\subseteq G'\setminus Z(G)$, then $C_j=yZ(G)$ for some $y\in G'\setminus Z(G)$. Then $\overline{C_i}\cdot\overline{C_j}=|Z(G)|\overline{xyZ(G)}$. Then $p_{ij}^h\neq 0$ if and only if $\overline{xyZ(G)}=\overline{xZ(G)}$, if and only if $y\in Z(G)$, which is false. So $p_{ij}^h=0$ in this case. Then the only time when $p_{ij}^h\neq 0$ is if $C_j=\{y\}\subseteq Z(G)$. Then $\overline{C_i}\cdot\overline{C_j}=\overline{xZ(G)}=\overline{C_h}$. So for all $y\in Z(G)$, we have $C_j=\{y\}$ such that $p_{ij}^h\neq 0$. There are $p^k$ choices for $C_j$ in this case. We have $p^n-1$ choices for $C_i=C_h$. Thus, we have a total of $p^k(p^n-1)$ triples $(i,j,h)$ with $p_{ij}^h\neq 0$ in this case.

    {\bf Case 4: }$i=h$ with $C_i=xG'$, $x\in G\setminus G'$. A similar argument to case 3 gives $(p^{2n}-1)(p^n+p^k-1)$ triples $(i,j,h)$ such that $p_{ij}^h\neq 0$ in this case.

    Having considered every possible case we get:
    \[\dim T(G)=(p^{2n}-1)(p^n+p^k-1)+p^k(p^n-1)+p^k+(p^{2n}+p^n+p^k-2)(p^{2n}+p^n+p^k-3)\]
    \[=3p^{k+n}+3p^{k+2n}+p^{2k}-6p^k+p^{4n}+3p^{3n}-5p^{2n}-6p^n+7.\qedhere\]
\end{proof}

 We now find the non-primary primitive central idempotents. We first determine the structure of the adjacency matrices $A_k$.

\begin{Lem}\label{lem:cam2outer}
    Let $C_k\not\subseteq G'$. Then for any classes $C_i,C_j$ of $G$ the $C_i,C_j$ block of $A_k$ is all $0$'s if $C_j\not\subseteq C_iC_k$ and all $1$'s if $C_j\subseteq C_iC_k$.
\end{Lem}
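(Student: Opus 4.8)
The plan is to push the entire computation down to the abelian quotient $G/G'$, where everything becomes transparent. Recall first what the $C_i,C_j$ block of $A_k$ is: its $(x,y)$ entry, for $x\in C_i$ and $y\in C_j$, equals $1$ if $(x,y)\in R_k$, i.e.\ if $yx^{-1}\in C_k$, and $0$ otherwise. Since $C_k\not\subseteq G'$ and $G$ is a Camina group, the defining property of Camina groups says $C_k$ is a full coset of $G'$, say $C_k=zG'$ with $z\in G\setminus G'$. Write $\pi\colon G\to G/G'$ for the quotient map. The key elementary observation is that $\pi$ is constant on every conjugacy class of $G$: this is immediate for a class contained in $G'$, and for a class outside $G'$ it holds because such a class is a coset of $G'$. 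Denote by $\overline{C}\in G/G'$ this common value $\pi(C)$ for a class $C$.

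Next I would evaluate an arbitrary entry of the block. Fix $x\in C_i$, $y\in C_j$. Because $C_k=zG'$ and $G/G'$ is abelian, $yx^{-1}\in C_k$ is equivalent to $\pi(yx^{-1})=\pi(z)$, i.e.\ to $\pi(y)=\pi(x)\pi(z)$, i.e.\ to $\overline{C_j}=\overline{C_i}\,\overline{C_k}$. This last condition involves only the classes $C_i,C_j,C_k$ and not the particular representatives $x,y$, so the $C_i,C_j$ block of $A_k$ is all $1$'s when $\overline{C_j}=\overline{C_i}\,\overline{C_k}$ and all $0$'s otherwise. In particular the block is constant, which is the substantive content of the lemma.

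It then remains to identify the condition $\overline{C_j}=\overline{C_i}\,\overline{C_k}$ with $C_j\subseteq C_iC_k$, for which I would show $C_iC_k=\pi^{-1}\big(\overline{C_i}\,\overline{C_k}\big)$, a single coset of $G'$. The inclusion $C_iC_k\subseteq\pi^{-1}\big(\overline{C_i}\,\overline{C_k}\big)$ is immediate from multiplicativity of $\pi$. For the reverse inclusion I split into two cases. If $C_i\not\subseteq G'$ then $C_i=xG'$ is already a coset, and $C_iC_k\supseteq C_i z=xzG'=\pi^{-1}\big(\overline{C_i}\,\overline{C_k}\big)$. If $C_i\subseteq G'$ then for every $a\in C_i$, normality of $G'$ gives $a\in G'\trianglelefteq G$, hence $azG'=zG'$, so $C_iC_k=C_i\,zG'=zG'=C_k=\pi^{-1}\big(\overline{C_i}\,\overline{C_k}\big)$. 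In either case $C_iC_k$ is a single coset of $G'$, hence a union of conjugacy classes (it is $G'$ itself, the union of the classes it contains, when $\overline{C_i}\,\overline{C_k}$ is trivial, and a single conjugacy class by the Camina property otherwise), so the statement ``$C_j\subseteq C_iC_k$'' is well posed and holds precisely when $\overline{C_j}=\overline{C_i}\,\overline{C_k}$. Combining this with the preceding paragraph gives the lemma.

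I expect the only friction to be bookkeeping rather than any genuine obstacle: one must keep straight the degenerate case where $\overline{C_i}\,\overline{C_k}$ is trivial, so that $C_iC_k=G'$ is a union of several classes rather than one, and one must check that $C_iC_k$ is the \emph{full} preimage coset (not merely contained in it) in the case $C_i\subseteq G'$. Both are handled by the short case analysis above once the argument is organized around $\pi$.
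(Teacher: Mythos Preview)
Your proof is correct and follows essentially the same approach as the paper: both write $C_k$ as a coset $zG'$, reduce the condition $(A_k)_{xy}=1$ to a statement depending only on the $G'$-cosets of $x$ and $y$, and then perform the same case split on whether $C_i\subseteq G'$ to identify that coset condition with $C_j\subseteq C_iC_k$. Your packaging via the quotient map $\pi\colon G\to G/G'$ is a slightly cleaner, more conceptual phrasing of the paper's direct coset manipulations, but the underlying argument is the same.
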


\begin{proof}
    As $C_k\not\subseteq G'$ and $G$ is a Camina group, $C_k=gG'$ for some $g\in G\setminus G'$. Now suppose $x\in C_i$ and $y\in C_j$. We have $(A_k)_{xy}=1$ if and only if $yx^{-1}\in gG'$, if and only if $y\in gG'x=gxG'$, if and only if $C_j\subseteq gxG'$. Now either $C_i\subseteq G'$ or $C_i=hG'$ for some $h\in G\setminus G'$. If $C_i\subseteq G'$, then $x\in G'$ and $gxG'=gG'=C_iC_k$. If $C_i=hG'$ for some $h\in G\setminus G'$, then $C_i=xG'$ as well. Then $gxG'=C_kC_i=C_iC_k$. So either way $C_iC_k=gxG'$. So for $x\in C_i$ and $y\in C_j$, $(A_k)_{xy}=1$ if and only if $C_j\subseteq C_iC_k$.
 
    Now suppose that the $C_i,C_j$ block of $A_k$ has a nonzero entry. Say that $(A_k)_{ab}=1$ for $a\in C_i, b\in C_j$. Then $ba^{-1}\in C_k$. Hence, $b\in C_ka\subseteq C_iC_k$. This implies $C_j\subseteq C_iC_k$, and so for all $x\in C_i$ and $y\in C_j$ that $(A_k)_{xy}=1$. Thus, the $C_i,C_j$ block of $A_k$ is all $1$'s, so the $C_i,C_j$ block of $A_k$ is all $0$'s or all $1$'s.
\end{proof}

\begin{Lem}\label{lem:cam3centeral}
    Let $C_k\subseteq Z(G)$ be a class of $G$. If $G$ has nilpotency class $2$, let $C_i,C_j$ be any two classes of $G$. If $G$ has nilpotency class $3$, then suppose $C_i,C_j\subseteq G'$ are classes of $G$. Then
    \begin{enumerate}
        \item If $C_i,C_j\subseteq Z(G)$, then the $C_i,C_j$ block of $A_k$ is $0$ if $C_j\neq C_iC_k$ and $1$ if $C_j=C_iC_k$.
        \item If $C_i\neq C_j$ with either $C_i\not\subseteq Z(G)$ or $C_j\not\subseteq Z(G)$. Then the $C_i,C_j$ block of $A_k$ is all $0$.
        \item If $C_i\not\subseteq Z(G)$, then the $C_i,C_i$ block of $A_k$ has a single nonzero entry in each row.
    \end{enumerate}
\end{Lem}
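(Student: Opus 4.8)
The plan is to exploit that a conjugacy class inside $Z(G)$ is a single central element, so that $A_k$ has exactly one nonzero entry in each row. First I would note that since $C_k\subseteq Z(G)$ and conjugacy classes of central elements are singletons, $C_k=\{z\}$ for some $z\in Z(G)$; hence $(A_k)_{xy}=1$ iff $yx^{-1}\in C_k$ iff $y=zx$. Thus the row of $A_k$ indexed by $x$ has its unique nonzero entry in column $zx$, and the whole lemma reduces to deciding which block that column lies in. In particular, if $C_i,C_j\subseteq Z(G)$ then $C_i=\{a\}$ and $C_j=\{b\}$ with $a,b$ central, the block is the single entry $(A_k)_{ab}$, which is $1$ iff $b=za$; since $C_iC_k=\{za\}$ this says exactly $C_j=C_iC_k$, proving part (1).

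For parts (2) and (3) the key point is that any class $C_i$ with $C_i\not\subseteq Z(G)$ is a left coset of $Z(G)$. In the nilpotency class $3$ case the hypothesis $C_i\subseteq G'$ then forces $C_i\subseteq G'\setminus Z(G)$, so Proposition \ref{lem:pcamclass} (with $\gamma_3(G)=Z(G)$ from Proposition \ref{thm:Camina3}) gives $C_i=xZ(G)$ for any $x\in C_i$; in the nilpotency class $2$ case $G'=Z(G)$ (as established in the proof of Proposition \ref{cor:p2dim}), so a class not contained in $Z(G)=G'$ is a coset $xG'=xZ(G)$. Either way, for $x\in C_i$ we have $zx\in zxZ(G)=xZ(G)=C_i$ because $z$ is central. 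Part (3) follows at once: the unique nonzero entry $zx$ of row $x$ lands in the $C_i,C_i$ block, so that block has exactly one nonzero entry per row.

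For part (2), suppose the $C_i,C_j$ block of $A_k$ has a nonzero entry, say $(A_k)_{xy}=1$ with $x\in C_i$ and $y\in C_j$, so $y=zx$. If $C_i\not\subseteq Z(G)$, the coset observation gives $y=zx\in C_i$, hence $C_j=C_i$; if instead $C_i\subseteq Z(G)$ (so $C_j\not\subseteq Z(G)$ by hypothesis), then $x$ is central, so $y=zx$ is central, so $C_j=\{y\}\subseteq Z(G)$. Both outcomes contradict the hypotheses of part (2), so the block must be identically $0$.

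I do not anticipate a genuine obstacle: the argument is driven entirely by the identity ``$A_k$ carries $x$ to $zx$,'' and the only care needed is keeping the two nilpotency-class cases separate when invoking the structural fact that a non-central class is a coset of $Z(G)$ (Proposition \ref{lem:pcamclass} in class $3$, and $G'=Z(G)$ in class $2$).
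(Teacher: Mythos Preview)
Your proof is correct and follows essentially the same approach as the paper: both reduce to the observation that $C_k=\{z\}$ is a singleton so that $(A_k)_{xy}=1$ iff $y=zx$, and then use that the relevant non-central classes are cosets of $Z(G)$ (via Proposition~\ref{lem:pcamclass} or $G'=Z(G)$) to locate the column $zx$. Your organization around ``the unique nonzero entry in row $x$ is at $zx$'' is slightly more streamlined than the paper's part-by-part coset computations, but the content is the same.
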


\begin{proof}
    As $C_k\subseteq Z(G)$, $C_k=\{g\}$.
    
    1: If $C_i,C_j\subseteq Z(G)$, then $C_i=\{x\}$ and $C_j=\{y\}$ for some $x,y\in Z(G)$. Then the $C_i,C_j$ block of $A_k$ is just the $(x,y)$ entry. So $(A_k)_{xy}=1$ if and only if $yx^{-1}\in C_k=\{g\}$. That is, $(A_k)_{xy}=1$ if and only if $yx^{-1}=g$, Hence, the $C_i,C_j$ block of $A_k$ is $0$ if $C_j\neq C_iC_k$ and $1$ if $C_j=C_iC_k$.

    For 2.: assume $C_i\neq C_j$ with either $C_i\not\subseteq Z(G)$ or $C_j\not\subseteq Z(G)$. Without loss of generality suppose $C_i\not\subseteq Z(G)$. Then by Proposition \ref{lem:pcamclass}, $C_i=hZ(G)$ for some $h\in G$. Let $x\in C_i$ and $y\in C_j$. If $(A_k)_{xy}=1$, then $yx^{-1}\in C_k\subseteq Z(G)$. We note as $x\in C_i=hZ(G)$, that $x^{-1}\in h^{-1}Z(G)$. If $C_j\subseteq Z(G)$, then $yx^{-1}\in h^{-1}Z(G)$. However, $h^{-1}Z(G)\cap Z(G)=\emptyset$, which contradicts $yx^{-1}\in h^{-1}Z(G)\cap Z(G)$. If $C_j\subseteq G\setminus Z(G)$, then by Proposition \ref{lem:pcamclass}, $C_j=\ell Z(G)$ for $\ell\in G\setminus Z(G)$. Since $C_i\neq C_j$, $\ell h^{-1}Z(G)\neq Z(G)$. As $y\in \ell Z(G)$ and $x^{-1}\in h^{-1}Z(G)$, $yx^{-1}\in \ell h^{-1}Z(G)$. However, $\ell h^{-1}Z(G)\cap Z(G)=\emptyset$ which contradicts $yx^{-1}\in h^{-1}Z(G)\cap Z(G)$. In either case we got a contradiction. Therefore, $(A_k)_{xy}\neq 1$. Thus for all $x\in C_i$ and $y\in C_j$ we have $(A_k)_{xy}=0$.

    For 3.: assume $C_i=C_j$ with $C_i\not\subseteq Z(G)$. Fix $x\in C_i$. Since $C_i\not\subseteq  Z(G)$ by Proposition \ref{lem:pcamclass}, $C_i=xZ(G)$. Then $(A_k)_{xy}=1$ if and only if $yx^{-1}\in C_k=\{g\}$, if and only if $y=xg$. Now as $C_i=xZ(G)$, we have $xg\in C_i$. So the $(x,xg)$ entry of $A_k$ is nonzero. This however is the only nonzero entry of the $x$th row of $A_k$. As we chose $x\in C_i$ arbitrarily, the $C_i,C_i$ block of $A_k$ has a single nonzero entry in each row. 
\end{proof}

\begin{Lem}\label{lem:cam3outer}
    Let $G$ have nilpotency class $3$. Let $C_k$ be a class of $G$ such that $C_k\subseteq G'$. Then for classes $C_i,C_j$,
    \begin{enumerate}
        \item If $C_i\neq C_j$ with either $C_i\not\subseteq G'$ or $C_j\not\subseteq G'$, then the $C_i,C_j$ block of $A_k$ is all $0$.
        \item If $C_i\not\subseteq G'$, then the $C_i,C_i$ block of $A_k$ has $|C_k|$ nonzero entries in each row.
    \end{enumerate}
\end{Lem}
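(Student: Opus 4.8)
The plan is to mirror the proof of Lemma \ref{lem:cam3centeral}, parts 2 and 3, replacing the role of $Z(G)$ by $G'$ and using that $G'\trianglelefteq G$. Since $C_k\subseteq G'$, either $C_k=\{e\}$, or $C_k\subseteq Z(G)$ with $|C_k|=1$, or $C_k\subseteq G'\setminus Z(G)$ with $C_k=gZ(G)$ for some $g\in G'\setminus Z(G)$ by Proposition \ref{lem:pcamclass}; but the argument will never need this trichotomy, only the single fact $C_k\subseteq G'$. The two key facts driving everything are that distinct conjugacy classes are disjoint and that $C_k x\subseteq G'x=xG'$ whenever $C_k\subseteq G'$.

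For part 1, I would take $C_i\neq C_j$ and, without loss of generality, assume $C_i\not\subseteq G'$; by Proposition \ref{lem:pcamclass}, $C_i=hG'$ for some $h\in G\setminus G'$, and for any $x\in C_i$ one also has $C_i=xG'$. Fix $x\in C_i$ and $y\in C_j$, and suppose $(A_k)_{xy}=1$, i.e. $yx^{-1}\in C_k\subseteq G'$. Since $G'\trianglelefteq G$ this forces $y\in G'x=xG'=C_i$, so $y\in C_i\cap C_j$, whence $C_i=C_j$, a contradiction. (If instead $C_j\not\subseteq G'$ and $C_i\subseteq G'$ the same computation applies with the roles swapped, or one simply notes $y\in C_i\subseteq G'$ while $y\in C_j\not\subseteq G'$ are disjoint.) Hence the $C_i,C_j$ block of $A_k$ is all $0$.

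For part 2, I would fix $x\in C_i$ with $C_i\not\subseteq G'$, so $C_i=xG'$ by Proposition \ref{lem:pcamclass}. Then $(A_k)_{xy}=1$ if and only if $yx^{-1}\in C_k$, i.e. $y\in C_k x$; and $C_k x\subseteq G'x=xG'=C_i$, so every nonzero entry of the $x$th row of $A_k$ lies in a column indexed by $C_i$, i.e. inside the $C_i,C_i$ block. Moreover $c\mapsto cx$ is a bijection of $C_k$ onto $C_k x$, so that row contains exactly $|C_k|$ ones. Since $x\in C_i$ was arbitrary, the $C_i,C_i$ block of $A_k$ has exactly $|C_k|$ nonzero entries in each row.

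There is no real obstacle here; the only point requiring a little care is the bookkeeping in part 2 — namely verifying both that all the $1$'s of a given row of $A_k$ fall inside the diagonal block $C_i,C_i$ (this is the inclusion $C_k x\subseteq C_i$) and that there are exactly $|C_k|$ of them rather than fewer because of collisions (this is injectivity of right multiplication by $x$). Both are immediate from $G'\trianglelefteq G$ and Proposition \ref{lem:pcamclass}.
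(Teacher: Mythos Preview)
Your proof is correct and follows essentially the same approach as the paper's: both replace $Z(G)$ by $G'$ in the argument of Lemma~\ref{lem:cam3centeral} and use Proposition~\ref{lem:pcamclass} to identify $C_i=xG'$. Your part~1 is in fact slightly cleaner than the paper's, since you avoid the subcase split on whether $C_j\subseteq G'$ by directly concluding $y\in xG'=C_i$ from $yx^{-1}\in G'$.
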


\begin{proof}
    First consider 1. Without loss of generality say that $C_i\not\subseteq G'$. Then by Proposition \ref{lem:pcamclass}, $C_i=hG'$ for some $h\in G\setminus G'$. Let $x\in C_i$ and $y\in C_j$. If $(A_k)_{xy}=1$, then $yx^{-1}\in C_k\subseteq G'$. Since $x\in C_i=hG'$, $x^{-1}\in h^{-1}G'$. If $C_j\subseteq G'$, then $yx^{-1}\in h^{-1}G'$. However, $h^{-1}G'\cap G'=\emptyset$. This contradicts $yx^{-1}\in h^{-1}G'\cap G'$. If $C_j\not\subseteq G'$, then by Proposition \ref{lem:pcamclass}, $C_j=\ell G'$ for some $\ell\in G\setminus G'$. Since $C_i\neq C_j$, $\ell h^{-1}G'\neq G'$. As $y\in \ell G'$ and $x^{-1}\in h^{-1}G'$, $yx^{-1}\in \ell h^{-1}G'$. However, $\ell h^{-1}G'\cap G'=\emptyset$. This contradicts $yx^{-1}\in h^{-1}G'\cap G'$. In either case we got a contradiction. Therefore, $(A_k)_{xy}\neq 1$. Thus for all $x\in C_i$ and $y\in C_j$ we have $(A_k)_{xy}=0$.

    Next consider 2. Fix $x\in C_i$. Since $C_i\not\subseteq G'$ by Proposition \ref{lem:pcamclass}, $C_i=xG'$. We have $(A_k)_{xy}=1$ if and only if $yx^{-1}\in C_k$, if and only if $y\in xC_k\subseteq xG'=C_i$. Then for each $y\in xC_k$ we get a nonzero entry in row $x$ of $A_k$. As we chose $x\in C_i$ arbitrarily, the $C_i,C_i$ block of $A_k$ has $|C_k|$ nonzero entries in each row. 
\end{proof}

\begin{Lem}\label{lem:cam3inner}
    Let $G$ have nilpotency class $3$. Let $C_k$ be a class of $G$ such that $C_k\subseteq G'\setminus Z(G)$. If $C_i,C_j\subseteq G'$, then the $C_i,C_j$ block of $A_k$ is all $0$'s if $C_j\not\subseteq C_iC_k$ and all $1$'s if $C_j\subseteq C_iC_k$.
\end{Lem}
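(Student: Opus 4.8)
The plan is to mimic the proof of Lemma \ref{lem:cam2outer}: reduce the whole statement to computing a single entry $(A_k)_{xy}$ with $x\in C_i$, $y\in C_j$, and then observe that the resulting condition depends only on the classes, not on the chosen representatives.

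First I would record the shapes of the classes involved. Since $G$ has nilpotency class $3$, its lower central series is $\{e\}<Z(G)<G'<G$, so $\gamma_3(G)=Z(G)$; hence $C_k\subseteq G'\setminus Z(G)$ gives $C_k=gZ(G)$ for some $g\in G'\setminus Z(G)$ by Proposition \ref{lem:pcamclass}. Similarly, because $C_i\subseteq G'$, the class $C_i$ is contained in a unique coset $L_i$ of $Z(G)$ inside $G'$: either $C_i=\{h\}$ with $h\in Z(G)$, in which case $L_i=Z(G)$, or $C_i=hZ(G)$ with $h\in G'\setminus Z(G)$ (Proposition \ref{lem:pcamclass}), in which case $L_i=C_i$. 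Either way $xZ(G)=L_i$ for every $x\in C_i$; define $L_j$ analogously for $C_j$.

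The one computation that actually needs doing is the identity $C_iC_k=gL_i$, a single coset of $Z(G)$ in $G'$. For the inclusion $C_iC_k\subseteq gL_i$ I would use that $G'/Z(G)$ is (elementary) abelian by Proposition \ref{thm:Camina3}, so that for $a\in C_i$ we have $agZ(G)=gaZ(G)=gL_i$, and therefore $a(gz)\in gL_i$ for every $z\in Z(G)$; for the reverse inclusion, fixing $a\in C_i$ and writing an arbitrary element of $gL_i=agZ(G)$ as $a\cdot(gz)$ with $gz\in gZ(G)=C_k$ exhibits it in $C_iC_k$.

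With this in hand the proof finishes quickly: for $x\in C_i$ and $y\in C_j$, $(A_k)_{xy}=1$ iff $yx^{-1}\in C_k=gZ(G)$, iff $y\in gZ(G)x=g\,(xZ(G))=gL_i=C_iC_k$ (using that $Z(G)$ is central and $xZ(G)=L_i$). Since $y$ lies in the $Z(G)$-coset $L_j$ and $C_iC_k=gL_i$ is itself a single $Z(G)$-coset, $y\in C_iC_k$ is equivalent to $L_j=gL_i$, equivalently to $C_j\subseteq C_iC_k$. This condition is independent of the chosen $x,y$, so the $C_i,C_j$ block of $A_k$ is constantly $1$ when $C_j\subseteq C_iC_k$ and constantly $0$ otherwise. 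I do not expect a genuine obstacle; the only delicate point is organizing the two possibilities for $C_i$ (a singleton in $Z(G)$ versus a full $Z(G)$-coset) uniformly, which the coset $L_i$ accomplishes, together with the appeal to commutativity of $G'/Z(G)$ to swap $ag$ and $ga$ modulo $Z(G)$.
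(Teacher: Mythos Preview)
Your proposal is correct and is precisely the argument the paper has in mind: the paper's proof of this lemma consists of the single sentence ``The proof of this result is similar to the proof of Lemma~\ref{lem:cam2outer},'' and what you have written is exactly that analogue, with $Z(G)$ playing the role of $G'$ and the abelianness of $G'/Z(G)$ (Proposition~\ref{thm:Camina3}) replacing the normality of $G'$ used to commute cosets. Your device of introducing the coset $L_i$ to treat the two shapes of $C_i$ uniformly is a clean way to package the case split in the proof of Lemma~\ref{lem:cam2outer}.
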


\begin{proof}
    The proof of this result is similar to the proof of Lemma \ref{lem:cam2outer}
\end{proof}

Using these four lemmas, we have the entire block structure of the adjacency matrices $A_k$ of $T(G)$. We use this as well as the fact $Z(G)$ is elementary abelian to construct the idempotents over the next several results.

\begin{Lem}\label{lem:nil2stuctrure1}
    Suppose that $|Z(G)|=p^k$, $k>0$, and $Z(G)=\langle z_1, z_2,\cdots, z_k\rangle$. If $G$ has nilpotency class $2$ then let $C_i$ be any class of $G$ outside of $G'$ and if $G$ has nilpotency class $3$ then let $C_i\subseteq G'\setminus Z(G)$ be a class of $G$. Let $C_{m,j}$ be the central class $\{z_m^j\}$ and $A_{m,j}$ be the adjacency matrix corresponding to $C_{m,j}$. Then $(E_i^*A_{m,1}E_i^*)^j=E_i^*A_{m,j}E_i^*$ for all $j\in \{0,1,2,\cdots p-1\}$.
\end{Lem}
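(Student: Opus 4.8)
The plan is to reduce the claim to a concrete statement about permutation matrices. Recall that $E_i^*$ is the diagonal projection onto the coordinates indexed by $C_i$, so for any matrix $M$ the product $E_i^*ME_i^*$ is the $C_i,C_i$ block of $M$ embedded in the full matrix with zeros elsewhere; in particular, since $(E_i^*)^2=E_i^*$, products of matrices of this form are computed blockwise, i.e. $(E_i^*ME_i^*)(E_i^*NE_i^*)=E_i^*\big(M|_{C_i}\,N|_{C_i}\big)E_i^*$, where $M|_{C_i}$ denotes the $C_i,C_i$ block. Hence it suffices to show that, as $|C_i|\times|C_i|$ matrices, the $C_i,C_i$ block of $A_{m,j}$ equals the $j$-th power of the $C_i,C_i$ block of $A_{m,1}$.

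First I would record the structural facts. By Proposition \ref{prop:cam2center}, $Z(G)$ is elementary abelian, so each $z_m$ has order $p$, the elements $z_m^0=e,z_m^1,\dots,z_m^{p-1}$ are distinct, each $C_{m,j}=\{z_m^j\}$ is a genuine (singleton) conjugacy class, and $A_{m,0}=A_0=I$. In both cases we have $C_i\not\subseteq Z(G)$: for nilpotency class $2$ we have $G'=Z(G)$ (as in the proof of Proposition \ref{cor:p2dim}), so a class outside $G'$ is a class outside $Z(G)$; for nilpotency class $3$ we assumed $C_i\subseteq G'\setminus Z(G)$. Thus by Proposition \ref{lem:pcamclass}, $C_i=xZ(G)$ for any $x\in C_i$.

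Next I would apply Lemma \ref{lem:cam3centeral}(3) with $C_k=C_{m,j}=\{z_m^j\}$: the $C_i,C_i$ block of $A_{m,j}$ has exactly one nonzero entry in each row, namely a $1$ in position $(x,xz_m^j)$ for each $x\in C_i$. Since $z_m^j\in Z(G)$ and $C_i=xZ(G)$ is a coset of $Z(G)$, the map $\rho_{z_m^j}\colon C_i\to C_i$, $x\mapsto xz_m^j$, is a well-defined bijection of $C_i$, so the $C_i,C_i$ block of $A_{m,j}$ is exactly the permutation matrix of $\rho_{z_m^j}$ acting on $C_i$. Because $\rho_{z_m^j}=\rho_{z_m}\circ\cdots\circ\rho_{z_m}$ ($j$ times) and the assignment of permutation matrices to permutations is compatible with composition, the $C_i,C_i$ block of $A_{m,j}$ is the $j$-th power of the $C_i,C_i$ block of $A_{m,1}$. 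Combining this with the blockwise-multiplication observation gives $(E_i^*A_{m,1}E_i^*)^j=E_i^*A_{m,j}E_i^*$ for $j\in\{1,\dots,p-1\}$, and for $j=0$ both sides equal $E_i^*$ since $A_{m,0}=I$.

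The argument is largely bookkeeping once Lemma \ref{lem:cam3centeral}(3) is available; the one point that needs genuine care is verifying that the nonzero pattern of the $C_i,C_i$ block of $A_{m,1}$ actually defines a permutation of $C_i$ (i.e. that right multiplication by $z_m$ does not "escape" the block), which is exactly where I use that $C_i$ is a full coset of the central subgroup $Z(G)$ (Proposition \ref{lem:pcamclass}) and that $z_m\in Z(G)$. After that, the identity $\rho_{z_m}^{\,\circ j}=\rho_{z_m^j}$ and the functoriality of permutation matrices under composition finish the proof.
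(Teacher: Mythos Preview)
Your proof is correct and rests on the same structural input as the paper's, namely Lemma~\ref{lem:cam3centeral}(3) together with $C_i=xZ(G)$ from Proposition~\ref{lem:pcamclass}. The packaging differs slightly: the paper argues by induction on $j$, expanding $(E_i^*A_{m,n}E_i^*)(E_i^*A_{m,1}E_i^*)$ entry by entry and checking that the $(x,y)$ entry is $1$ exactly when $y=xz_m^{n+1}$, whereas you observe once and for all that the $C_i,C_i$ block of $A_{m,j}$ is the permutation matrix of right multiplication by $z_m^j$ on the coset $C_i=xZ(G)$, and then invoke $\rho_{z_m}^{\,j}=\rho_{z_m^j}$. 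Your framing is a bit cleaner---it replaces the inductive entry computation by the single observation that $z\mapsto\rho_z$ is a homomorphism from $Z(G)$ to the permutation group of $C_i$---but the two arguments are really the same computation viewed from different angles, and both hinge on the same ``does not escape the block'' point you flagged, i.e.\ that $xz_m\in C_i$ because $C_i$ is a full $Z(G)$-coset.
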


\begin{proof}
    From the proof of Lemma \ref{lem:cam3centeral} we have for all $x,y\in C_i$, $(E_i^*A_{m,j} E_i^*)_{xy}=1$ if and only if $y=xz_m^j$. We shall proceed by induction on $j$. For $j=0,1$, the result is clear. We now suppose that the result is true for some $n$, $1\leq n\leq p-2$. That is, $(E_i^*A_{m,1}E_i^*)^n=E_i^*A_{m,n}E_i^*$. Then 
    \[(E_i^*A_{m,1}E_i^*)^{n+1}=(E_i^*A_{m,1}E_i^*)^n(E_i^*A_{m,1}E_i^*)=(E_i^*A_{m,n}E_i^*)(E_i^*A_{m,1}E_i^*).\]
    We observe $[(E_i^*A_{m,n}E_i^*)(E_i^*A_{m,1}E_i^*)]_{xy}=\sum_{u\in G}(E_i^*A_{m,n}E_i^*)_{xu}(E_i^*A_{m,1}E_i^*)_{uy}$.
    
    From Lemma \ref{lem:cam3centeral}, $(E_i^*A_{m,n}E_i^*)_{xu}(E_i^*A_{m,1}E_i^*)_{uy}=1$ if and only if $u=xz_m^n$ and $y=uz_m=xz_m^{n+1}$. The first of these guarantees the sum is either $0$ or $1$ as there is at most one nonzero term in this sum, namely, when $u=xz_m^n$. Hence, the $(x,y)$ entry of $(E_i^*A_{m,n}E_i^*)(E_i^*A_{m,1}E_i^*)$ is $1$ if and only if $y=xz_m^{n+1}$. From the proof of Lemma \ref{lem:cam3centeral} this is the condition for the $(x,y)$ entry of $E_i^*A_{m,n+1}E_i^*$ to be $1$. Hence, we have $[(E_i^*A_{m,n}E_i^*)(E_i^*A_{m,1}E_i^*)]_{xy}=(E_i^*A_{m,n+1}E_i^*)_{xy}$ for all $x,y\in C_{i}$. Clearly $(E_i^*A_{m,1}E_i^*)^{n+1}=E_i^*A_{m,n}E_i^*E_i^*A_{m,1}E_i^*$ is $0$ outside of the $C_i,C_i$ block. Hence, we have $E_i^*A_{m,n+1}E_i^*=(E_i^*A_{m,1}E_i^*)^{n+1}$. Then by induction we have $(E_i^*A_{m,1}E_i^*)^j=E_i^*A_{m,j}E_i^*$ for all $j\in \{0,1,\cdots p-1\}$. 
\end{proof}

\begin{Lem}\label{lem:nil2structure2}
    Suppose that $|Z(G)|=p^k$, $k>0$, and $Z(G)=\langle z_1, z_2,\cdots, z_k\rangle$. If $G$ has nilpotency class $2$ then let $C_i$ be any class of $G$ outside of $G'$; if $G$ has nilpotency class $3$ then let $C_i\subseteq G'\setminus Z(G)$ be a class of $G$. Suppose that for $1\leq j\leq k$ we have $C_j=\{z_j\}$. 
    Let $C_\ell=\{z_1^{m_1}z_2^{m_2}\cdots z_k^{m_k}\}$ be any central class of $G$. Then $E_i^*A_\ell E_i^*=\prod_{j=1}^k (E_i^*A_{j}E_i^*)^{m_j}$.
\end{Lem}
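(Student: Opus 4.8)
The plan is to reduce everything to the description of the $C_i,C_i$ block of a central adjacency matrix coming from Lemma \ref{lem:cam3centeral} (which, together with Lemma \ref{lem:nil2stuctrure1}, already treats both nilpotency classes uniformly, so no case split is needed here). That lemma, via part 3 and its proof, tells us that for a central class $\{g\}$ and $x,y\in C_i$ the $(x,y)$ entry of $E_i^*A_{\{g\}}E_i^*$ equals $1$ precisely when $y=xg$, and of course $E_i^*ME_i^*$ is supported on the $C_i,C_i$ block for any matrix $M$. Since $Z(G)$ is elementary abelian (Proposition \ref{prop:cam2center}), each $z_j$ has order $p$, so we may take each exponent $m_j$ in $\{0,1,\dots,p-1\}$; Lemma \ref{lem:nil2stuctrure1} then applies and gives $(E_i^*A_jE_i^*)^{m_j}=E_i^*A_{j,m_j}E_i^*$, where $A_{j,m_j}$ is the adjacency matrix of the central class $\{z_j^{m_j}\}$, so its $C_i,C_i$ block has $(x,y)$ entry $1$ iff $y=xz_j^{m_j}$.

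Next I would prove by induction on $t$, for $1\le t\le k$, that $\prod_{j=1}^t(E_i^*A_jE_i^*)^{m_j}=\prod_{j=1}^t(E_i^*A_{j,m_j}E_i^*)$ is the matrix, supported on the $C_i,C_i$ block, whose $(x,y)$ entry is $1$ iff $y=xz_1^{m_1}\cdots z_t^{m_t}$. The base case $t=1$ is the previous paragraph. For the inductive step, write $M=\prod_{j=1}^t(E_i^*A_{j,m_j}E_i^*)$; then the $(x,y)$ entry of $M\,(E_i^*A_{t+1,m_{t+1}}E_i^*)$ is $\sum_u M_{xu}(E_i^*A_{t+1,m_{t+1}}E_i^*)_{uy}$, and by the inductive hypothesis together with the first paragraph a term is nonzero only when $u=xz_1^{m_1}\cdots z_t^{m_t}$ and $y=uz_{t+1}^{m_{t+1}}$. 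Hence there is at most one nonzero summand, the product stays a $0/1$ matrix, it remains supported on the $C_i,C_i$ block (each factor is), and its $(x,y)$ entry is $1$ exactly when $y=xz_1^{m_1}\cdots z_{t+1}^{m_{t+1}}$.

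Taking $t=k$ shows $\prod_{j=1}^k(E_i^*A_jE_i^*)^{m_j}$ has $(x,y)$ entry $1$ iff $y=xz_1^{m_1}\cdots z_k^{m_k}$, and $0$ off the $C_i,C_i$ block. On the other hand $C_\ell=\{z_1^{m_1}\cdots z_k^{m_k}\}$ is a central class, so Lemma \ref{lem:cam3centeral} describes $E_i^*A_\ell E_i^*$ as exactly this same matrix: here one also uses that $C_i=xZ(G)$, so $y=xz_1^{m_1}\cdots z_k^{m_k}$ does lie in $C_i$ and the $C_j=C_iC_\ell$ condition of that lemma is automatic within the $C_i,C_i$ block. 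Comparing the two expressions yields $E_i^*A_\ell E_i^*=\prod_{j=1}^k(E_i^*A_jE_i^*)^{m_j}$. I do not expect a genuine obstacle: the only points needing care are taking the exponents in the range where Lemma \ref{lem:nil2stuctrure1} is valid, and checking that the iterated products never leave the $C_i,C_i$ block and never create an entry exceeding $1$, both of which are immediate from the single-nonzero-entry-per-row structure of the central blocks established in Lemma \ref{lem:cam3centeral}.
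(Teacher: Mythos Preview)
Your proposal is correct and follows essentially the same approach as the paper: both arguments multiply the factors one at a time and read off the resulting $(x,y)$ entry using the single-nonzero-entry-per-row description from Lemma~\ref{lem:cam3centeral}, together with Lemma~\ref{lem:nil2stuctrure1} to interpret each power. The only cosmetic difference is that the paper inducts on the number of nonzero exponents $m_j$ (with a WLOG step to reorder them), whereas you induct directly on the upper index $t$ of the partial product; your version is slightly more streamlined but otherwise the computations are identical.
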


\begin{proof}
    Let $L$ be the number of nonzero $m_j$ in $C_\ell=\{z_1^{m_1}z_2^{m_2}\cdots z_k^{m_k}\}$ where $0\leq m_i< p$. We proceed by induction on $L$. For $L=0$, the result is true as both sides are the identity in the $C_i,C_i$ block and $0$ outside of this block. If $L=1$, then without the loss of generality we may assume that $m_1$ is the only nonzero $m_j$. Then 
    \[\prod_{j=1}^k (E_i^*A_{j}E_i^*)^{m_j}=(E_i^*A_{j}E_i^*)^{m_1}.\]
    By Lemma \ref{lem:nil2stuctrure1}, this equals $E_i^*A_\ell E_i^*$, so the result is true for $L=1$. Now assume the result is true for some $n$ with $1\leq n\leq k-1$. That is, for all $C_\ell=\{z_1^{m_1}z_2^{m_2}\cdots z_k^{m_k}\}$ with $n$ of the $m_j\neq 0$ we have
    \[E_i^*A_\ell E_i^*=\prod_{j=1}^k (E_i^*A_{j}E_i^*)^{m_j}.\]
    Now we consider $C_\alpha=\{z_1^{a_1}z_2^{a_2}\cdots z_k^{a_k}\}$ in which $n+1$ of the $a_j$ are not zero. Without loss of generality, we may assume the first $n+1$ of the $a_j$ are not zero. Then we have
    \[\prod_{j=1}^k (E_i^*A_{j}E_i^*)^{a_j}=\prod_{j=1}^{n+1} (E_i^*A_jE_i^*)^{a_j}=\left(\prod_{j=1}^{n} (E_i^*A_jE_i^*)^{a_j}\right)(E_i^*A_{n+1}E_i^*)^{a_{n+1}}.\]
    Letting $C_\beta=\{z_1^{a_1}z_2^{a_2}\cdots z_n^{a_n}\}$ we have
    \[\left(\prod_{j=1}^{n} (E_i^*A_jE_i^*)^{a_j}\right)(E_i^*A_{n+1}E_i^*)^{a_{n+1}}=(E_i^*A_\beta E_i^*)(E_i^*A_{n+1}E_i^*)^{a_{n+1}}.\]
    Letting $C_\gamma=\{z_{n+1}^{a_{n+1}}\}$ by Lemma \ref{lem:nil2stuctrure1}, $(E_i^*A_{n+1}E_i^*)^{a_{n+1}}=E_i^*A_\gamma E_i^*$. We then have
    \[\prod_{j=1}^k (E_i^*A_{j}E_i^*)^{a_j}=(E_i^*A_\beta E_i^*)(E_i^*A_\gamma E_i^*).\]

    Now we observe $[(E_i^*A_{\beta}E_i^*)(E_i^*A_{\gamma}E_i^*)]_{xy}=\sum_{u\in G}(E_i^*A_{\beta}E_i^*)_{xu}(E_i^*A_{\gamma}E_i^*)_{uy}$. From Lemma \ref{lem:cam3centeral}, $(E_i^*A_{\beta}E_i^*)_{xu}(E_i^*A_{\gamma}E_i^*)_{uy}=1$ if and only if $u=xz_1^{a_1}z_2^{a_2}\cdots z_n^{a_n}$ and $y=uz_{n+1}^{a_{n+1}}=xz_1^{a_1}z_2^{a_2}\cdots z_{n+1}^{a_{n+1}}$. The first of these guarantees the sum is either $0$ or $1$ as there is at most one nonzero term in this sum, namely, when $u=xz_1^{a_1}z_2^{a_2}\cdots z_n^{a_n}$. Hence, the $(x,y)$ entry of $(E_i^*A_{\beta}E_i^*)(E_i^*A_{\gamma}E_i^*)$ is $1$ if and only if $y=xz_1^{a_1}z_2^{a_2}\cdots z_{n+1}^{a_{n+1}}$. From the proof of Lemma \ref{lem:cam3centeral} this is the condition for the $(x,y)$ entry of $E_i^*A_{\alpha}E_i^*$ to be $1$. Hence, we have $[(E_i^*A_{\beta}E_i^*)(E_i^*A_{\gamma}E_i^*)]_{xy}=(E_i^*A_{\alpha}E_i^*)_{xy}$ for all $x,y\in C_{i}$. Hence, we have $\prod_{j=1}^k (E_i^*A_{j}E_i^*)^{a_j}=E_i^*A_\alpha E_i^*$. Then by induction $E_i^*A_\ell E_i^*=\prod_{j=1}^k (E_i^*A_{j}E_i^*)^{m_j}$.
\end{proof}

Before constructing the primitive central idempotents, we prove one more result about the nature of the blocks of the adjacency matrices outside of $G'$ when considering an adjacency matrix coming from a class in the center of the group.

\begin{Prop}\label{prop:kroneckerdec2}
    Suppose that $|Z(G)|=p^k$, $k>0$, and $Z(G)=\langle z_1, z_2,\cdots, z_k\rangle$. If $G$ has nilpotency class $2$ then let $C_j$ be any class of $G$ outside of $G'$ and if $G$ has nilpotency class $3$ then let $C_j\subseteq G'\setminus Z(G)$. Let $C_i=\{z_i\}$. Then the $C_j,C_j$ block of $A_i$ can be written as $\bigotimes_{m=1}^k M_m$ (the Kronecker product) where each $M_m$ is a $p\times p$ matrix with $M_m=I$ for $m\neq k-i+1$ and
    \begin{equation}\label{eq:tensor}
    M_{k-i+1}=\begin{pmatrix}
        0 & 1 & 0 & \cdots & 0 \\
        0 & 0 & 1 & \cdots & 0 \\
        \vdots & \vdots & & \ddots & \vdots \\
        0 & 0 & 0 & \cdots & 1 \\
        1 & 0 & 0 & \cdots & 0
    \end{pmatrix}\end{equation}
    relative to a specific ordering of the elements $Z(G)$.
\end{Prop}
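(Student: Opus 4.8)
The plan is to coordinatize the elements of $Z(G)$ — and hence of $C_j$ — by exponent vectors, read off the $C_j,C_j$ block of $A_i$ in those coordinates, and then recognize it as a Kronecker product by a direct entry-by-entry comparison with $\bigotimes_{m=1}^k M_m$.

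First I would record the setup. Since $Z(G)$ is elementary abelian of order $p^k$ (Proposition \ref{prop:cam2center}) and $Z(G)=\langle z_1,\dots,z_k\rangle$, every element of $Z(G)$ is uniquely of the form $z_1^{a_1}z_2^{a_2}\cdots z_k^{a_k}$ with $0\le a_m<p$, so $Z(G)\cong(\Z_p)^k$ with the $z_m$ as a basis. In the nilpotency class $2$ case $G'=Z(G)$ (as established in the proof of Proposition \ref{cor:p2dim}) and in the class $3$ case Proposition \ref{lem:pcamclass} applies directly; either way $C_j=xZ(G)$ for some $x$, so $|C_j|=p^k$ and the elements of $C_j$ are exactly $xz_1^{a_1}\cdots z_k^{a_k}$ for $(a_1,\dots,a_k)\in\{0,\dots,p-1\}^k$.

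Next I would invoke Lemma \ref{lem:cam3centeral}(3): since $C_j\not\subseteq Z(G)$, the proof of that lemma shows that for $u,v\in C_j$ the $(u,v)$ entry of $A_i$ is $1$ precisely when $v=uz_i$. Writing $u=xz_1^{a_1}\cdots z_k^{a_k}$ and $v=xz_1^{b_1}\cdots z_k^{b_k}$ and using that the $z_m$ form a basis of $(\Z_p)^k$, the relation $v=uz_i$ is equivalent to $b_i\equiv a_i+1\pmod p$ together with $b_m=a_m$ for all $m\ne i$. I would then fix the ordering of $Z(G)$ (and thus of $C_j$) so that $xz_1^{a_1}\cdots z_k^{a_k}$ occupies position $(a_k,a_{k-1},\dots,a_1)$ in the lexicographic order on $\{0,\dots,p-1\}^k$; that is, the central elements are listed by their reversed exponent vectors. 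With rows and columns of $\bigotimes_{m=1}^k M_m$ indexed lexicographically by tuples $(c_1,\dots,c_k),(d_1,\dots,d_k)$, its $(c,d)$ entry is $\prod_{m=1}^k (M_m)_{c_m,d_m}$; taking $M_m=I$ for $m\ne k-i+1$ and $M_{k-i+1}$ equal to the cyclic shift of \eqref{eq:tensor} — whose $(a,b)$ entry (rows and columns indexed $0,\dots,p-1$) is $1$ iff $b\equiv a+1\pmod p$ — this product is $1$ iff $d_{k-i+1}\equiv c_{k-i+1}+1\pmod p$ and $d_m=c_m$ for $m\ne k-i+1$. Under the chosen ordering one has $c_m=a_{k-m+1}$ and $d_m=b_{k-m+1}$, so in particular $c_{k-i+1}=a_i$ and $d_{k-i+1}=b_i$, and this condition becomes exactly $b_i\equiv a_i+1\pmod p$ with $b_m=a_m$ for $m\ne i$ — matching the entry condition for $A_i$ above. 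Hence the $C_j,C_j$ block of $A_i$ equals $\bigotimes_{m=1}^k M_m$.

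The one genuine subtlety, and the step I would be most careful about, is the index reversal: the shift factor must land in Kronecker-coordinate $k-i+1$ rather than $i$, which is precisely what forces the ordering of $Z(G)$ by reversed exponent vectors. Since that ordering is independent of $i$, a single ordering simultaneously realizes every $A_i$ ($1\le i\le k$) in Kronecker form, which is the content of the phrase "relative to a specific ordering" in the statement; everything else is routine group-algebra bookkeeping.
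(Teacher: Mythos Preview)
Your argument is correct and in fact uses the same ordering of $Z(G)$ as the paper (the paper's sequence $\omega,\omega(1,0,\dots,0),\dots$ is exactly the lexicographic order on reversed exponent vectors you describe). The key difference is method: the paper proves the Kronecker decomposition by induction on $k$, peeling off the $z_{n+1}$ factor at each stage and passing to the Camina quotient $G/\langle z_{n+1}\rangle$ (invoking Proposition \ref{prop:cammod}) to identify the remaining $p^n\times p^n$ block with the corresponding adjacency block in the smaller group. Your direct entry-by-entry comparison bypasses both the induction and the quotient-group step entirely, reducing everything to the single observation from Lemma \ref{lem:cam3centeral} that $(A_i')_{u,v}=1$ iff $v=uz_i$, together with the standard formula for entries of a Kronecker product. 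Your route is shorter and more transparent; the paper's inductive framework, on the other hand, is reused almost verbatim in the class-$3$ analogue (Proposition \ref{prop:kroneckerdec3}), so its extra machinery pays off there.
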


\begin{proof}
    Let $A_i'$ denote the $C_j,C_j$ block of $A_i$. By Proposition \ref{lem:pcamclass}, $C_j=xZ(G)$. We order the elements of $Z(G)$ as follows. First let $\omega$ be the sequence $1,z_1,z_1^2,\cdots, z_1^{p-1}$. We define $\omega(m_2,m_3,\cdots, m_k)=z_k^{m_k}z_{k-1}^{m_{k-1}}\cdots z_2^{m_2}\cdot\omega$. Now we order the elements of $Z(G)$ as 
    \[\omega, \omega(1,0,\cdots, 0),\omega(2,0,\cdots, 0), \cdots, \omega(p-1,0,\cdots, 0), \omega(0,1,\cdots, 0), \omega(1,1,\cdots, 0),\cdots, \omega(p-1,1,\cdots, 0),\]
    \[\omega(0,2,\cdots 0), \omega(1,2,\cdots 0),\cdots,\omega(p-1,p-1,0\cdots, 0),\omega(0,0,1,\cdots 0),\omega(1,0,1,\cdots, 0),\cdots,  \omega(p-1,\cdots, p-1).\]

    We proceed by induction on $k$. For $k=1$, we have $Z(G)=\{1,z_1,z_1^2,\cdots, z_1^{p-1}\}$. Consider the row indexed by $xz_1^{m_1}$ in $A_1'$. From the proof of Lemma \ref{lem:cam3centeral} the only nonzero entry in this row is then $(xz_1^{m_1},xz_1^{m_1+1})$ entry. Notice that each nonzero entry of $A_1'$ is shifted one to the right of the main diagonal working modulo $p$ relative to the ordering of $Z(G)$. Therefore, 
    \[A_1'=\begin{pmatrix}
        0 & 1 & 0 & \cdots & 0 \\
        0 & 0 & 1 & \cdots & 0 \\
        \vdots & \vdots & & \ddots & \vdots \\
        0 & 0 & 0 & \cdots & 1 \\
        1 & 0 & 0 & \cdots & 0
    \end{pmatrix}.\]
   This is equal to $\bigotimes_{m=1}^k M_m=\bigotimes_{m=1}^1 M_m=M_1$ where $M_1$ is the matrix in (\ref{eq:tensor}). This proves the base case.

    Now suppose that the result is true for $k=n$. We consider the $n+1$ case. Let us first consider $A_i'$ for $1\leq i\leq n$. Consider the $x\omega(m_2,m_3,\cdots, m_{n+1})$ row block of $A_i'$. Row $xz_{n+1}^{m_{n+1}}z_n^{m_n}\cdots z_2^{m_2}z_1^{m_1}$ in this row block has a single nonzero entry in entry $(xz_{n+1}^{m_{n+1}}z_n^{m_n}\cdots z_2^{m_2}z_1^{m_1}, xz_{n+1}^{m_{n+1}}z_n^{m_n}\cdots z_i^{m_i+1} z_{i-1}^{m_{i-1}}\cdots z_2^{m_2}z_1^{m_1})$ by Lemma \ref{lem:cam3centeral}. The only nonzero column block in this row block is the block $x\omega(m_2,m_3,\cdots, m_i+1,m_{i+1},\cdots, m_n,m_{n+1})$. For each choice of $m_{n+1}$, the nonzero entry is \\$(xz_{n+1}^{m_{n+1}}z_n^{m_n}\cdots z_2^{m_2}z_1^{m_1}, xz_{n+1}^{m_{n+1}}z_n^{m_n}\cdots z_i^{m_i+1} z_{i-1}^{m_{i-1}}\cdots z_2^{m_2}z_1^{m_1}).$ 
    Thus each $x\omega(m_2,m_3,\cdots, m_{n+1})$,\\ $x\omega(m_2,\cdots, m_{i}+1,m_{i+1},\cdots, m_{n+1})$ block of $A_i'$ is the same. All other blocks are $0$ in the $x\omega(m_2,m_3,\cdots, m_{n+1})$ row block. As each of these blocks have the same power of $z_{n+1}$, they are all diagonal blocks because of the ordering of $Z(G)$. This means $A_i'=I\otimes M$ where $M$ is some $p^n\times p^n$ matrix. In $M$ we are holding the power on $z_{n+1}$ fixed and only considering $z_1,z_2,\cdots, z_n$. Relabeling in $M$ by dropping the $z_{n+1}^{m_{n+1}}$ term, does not change $M$. Notice the $(xz_n^{m_n}z_{n-1}^{m_{n-1}}\cdots z_1^{m_1},xz_{n}^{m_n}\cdots z_i^{m_i+1} z_{i-1}^{m_{i-1}}\cdots z_1^{m_1})$ entry is the only nonzero entry in row $xz_n^{m_n}x_{n-1}^{m_{n-1}}\cdots z_1^{m_1}$. As $G/\langle z_{n+1}\rangle$ is a Camina group by Proposition \ref{prop:cammod}, we have by Lemma \ref{lem:cam3centeral}, entry $(xz_n^{m_n}z_{n-1}^{m_{n-1}}\cdots z_1^{m_1},xz_{n}^{m_n}\cdots z_i^{m_i+1} z_{i-1}^{m_{i-1}}\cdots z_1^{m_1})$ is the only nonzero entry in row $xz_n^{m_n}x_{n-1}^{m_{n-1}}\cdots z_1^{m_1}$ of $\overline{A}_i$, where $\overline{A}_i$ is the $xZ(G/\langle z_{n+1}\rangle),xZ(G/\langle z_{n+1}\rangle)$ block of $A_i$ in $G/\langle z_{n+1}\rangle$. Therefore, $M$ and $\overline{A}_i$ are the same in every row, so $M=\overline{A}_i$. By the inductive hypothesis $\overline{A_i}=\bigotimes_{m=1}^n N_m$ where $N_m=I$ for $m\neq n-i+1$ and $N_{n-i+1}$ is the matrix in (\ref{eq:tensor}). We then have $A_i'=I\otimes \bigotimes_{m=1}^n N_m$. Shifting all the indices $N_m$ up by one and setting $I=N_1$ we get $A_i'=\bigotimes_{m=1}^{n+1} N_m$ where $N_M=I$ for $j\neq n+1-i+1$ and $N_{(n+1)-i+1}$ is the matrix in (\ref{eq:tensor}). Hence, $A_i'$ has the desired form.

    Now we consider $A_{n+1}'$. Consider the $x\omega(m_2,m_3,\cdots, m_{n+1})$ row block. Row $xz_{n+1}^{m_{n+1}}\cdots z_1^{m_1}$ has nonzero entry $(xz_{n+1}^{m_{n+1}}\cdots z_1^{m_1},xz_{n+1}^{m_{n+1}+1}\cdots z_1^{m_1})$ by Lemma \ref{lem:cam3centeral}. Then the only nonzero column block in this row block is $x\omega(m_2,m_3,\cdots, m_{n+1}+1)$. For each choice of $m_{n+1}$ the 
    \[x\omega(m_2,m_3,\cdots, m_{n+1}),x\omega(m_2,m_3,\cdots, m_{n+1}+1)\] 
    block of $A_{n+1}'$ is the same. All other blocks are $0$. Notice fixing $m_{n+1}$, we can relabel the entries of the $x\omega(m_2,m_3,\cdots, m_{n+1}),x\omega(m_2,m_3,\cdots, m_{n+1}+1)$ block without $z_{n+1}$ since it does not affect entries in this block as we are fixing it. Doing this relabeling the only nonzero entries in this block are of the form $(xz_n^{m_n}\cdots z_1^{m_1},xz_n^{m_n}\cdots z_1^{m_1})$. Thus the matrix in the $x\omega(m_2,m_3,\cdots, m_{n+1}),x\omega(m_2,m_3,\cdots, m_{n+1}+1)$ block is $I_{p^n}=\bigotimes_{m=1}^n I_p$ for each block. Each of these nonzero blocks is shifted one to the right of the main diagonal as each has one higher power on $z_{n+1}$ for the column than the row. We then have $A_{n+1}'=M\otimes \bigotimes_{m=1}^n I_p$ where $M$ is the matrix in (\ref{eq:tensor}). So $A_{n+1}'$ has the desired form. This completes the inductive step. Then by induction, $\bigotimes_{m=1}^k M_m$ where $M_m=I$ for $m\neq k-i+1$ and $M_{k-i+1}$ is the matrix in (\ref{eq:tensor}) for all $i$.
\end{proof}

We are now ready to construct the primitive central idemopotents for the nilpotency class $2$ case. In doing so we first prove that the constructed elements are idemopotents, and then prove they are all orthogonal. We write $A\sim B$ if $A$ and $B$ are similar matrices and use $\trace(A)$ to denote the trace of matrix $A$. We let $\langle A,B\rangle$ denote the inner product given by $\langle A,B\rangle=\trace(A\overline{B}^T)$.

\begin{Thm}\label{thm:2idm}
    Suppose $|Z(G)|=p^k$ with $Z(G)=\langle z_1,z_2,\cdots, z_k\rangle$. Let $C_j=\{z_j\}$ be the class containing $z_j$ for all $1\leq j\leq k$. If $G$ has nilpotency class $2$ then let $C_i$ be any class of $G$ outside of $G'$ and if $G$ has nilpotency class $3$ then let $C_i\subseteq G'\setminus Z(G)$. Let $\zeta$ be any primitive $p$th root of unity. Then
    \[W_i(\alpha)=\frac{1}{p^k}\sum_{a_1,a_2,\cdots, a_k\in \{0,1,\cdots, p-1\}} \prod_{j=1}^k \zeta^{\alpha_j a_j}(E_i^*A_jE_i^*)^{a_j}\]
    is an idempotent of $T(G)$ for any $\alpha=(\alpha_1,\alpha_2,\cdots, \alpha_k)$ with each $\alpha_j\in \{0,1,\cdots, p-1\}$.
\end{Thm}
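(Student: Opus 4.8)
Write $B_j:=E_i^*A_jE_i^*$ for $1\le j\le k$. Each $B_j$ lies in $T(G)$, and in fact in the corner algebra $E_i^*T(G)E_i^*$, whose multiplicative identity is $E_i^*$; in particular $B_j^{\,0}=E_i^*A_0E_i^*=E_i^*$, consistent with Lemma~\ref{lem:nil2stuctrure1}. The plan is to show that $W_i(\alpha)=\prod_{j=1}^k P_j$, where $P_j:=\frac1p\sum_{a=0}^{p-1}\zeta^{\alpha_j a}B_j^{\,a}$, and that each $P_j$ is an idempotent of this (commutative) corner algebra; the conclusion then follows because a product of pairwise commuting idempotents is an idempotent.

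First I would record the two structural facts that make everything go: (i) the $B_j$ pairwise commute, and (ii) $B_j^{\,p}=E_i^*$. Both are immediate from the preceding results. For (ii): by Lemma~\ref{lem:nil2stuctrure1} together with one more step of the induction in its proof, $B_j^{\,p}=E_i^*A_{j,p}E_i^*$, where $A_{j,p}$ is the adjacency matrix of the central class $\{z_j^{\,p}\}$; since $Z(G)$ is elementary abelian by Proposition~\ref{prop:cam2center}, this class is $\{e\}$, so $A_{j,p}=I$ and $B_j^{\,p}=E_i^*$. For (i): by Lemma~\ref{lem:cam3centeral}, for $x,y\in C_i$ one has $(B_j)_{xy}=1\iff y=xz_j$, whence $(B_jB_\ell)_{xy}=1\iff y=xz_jz_\ell$, and since $z_jz_\ell=z_\ell z_j$ this forces $B_jB_\ell=B_\ell B_j$. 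Alternatively, both (i) and (ii) can be read directly off Proposition~\ref{prop:kroneckerdec2}, which realizes the $C_i,C_i$-block of $A_j$ as a Kronecker product of the $p$-cycle matrix~(\ref{eq:tensor}) in coordinate $k-j+1$ with identity matrices in the remaining coordinates; such Kronecker products for distinct $j$ commute, and the matrix in~(\ref{eq:tensor}) has $p$-th power equal to the identity.

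Next, since the scalars $\zeta^{\alpha_j a_j}$ are central and the $B_j$ commute pairwise, the $k$-fold sum defining $W_i(\alpha)$ distributes over the product:
\[
W_i(\alpha)=\frac1{p^k}\sum_{a_1,\dots,a_k}\ \prod_{j=1}^k \zeta^{\alpha_j a_j}B_j^{\,a_j}
=\prod_{j=1}^k\Bigl(\tfrac1p\sum_{a=0}^{p-1}\zeta^{\alpha_j a}B_j^{\,a}\Bigr)=\prod_{j=1}^k P_j .
\]
Then I would verify $P_j^2=P_j$: expanding $P_j^2=\frac1{p^2}\sum_{a,b}\zeta^{\alpha_j(a+b)}B_j^{\,a+b}$ and substituting $c\equiv a+b\pmod p$ — legitimate by (ii) and by $\zeta^p=1$, so that $\zeta^{\alpha_j(a+b)}B_j^{\,a+b}=\zeta^{\alpha_j c}B_j^{\,c}$ — each residue $c\in\{0,\dots,p-1\}$ arises from exactly $p$ pairs $(a,b)$, so $P_j^2=\frac1p\sum_{c=0}^{p-1}\zeta^{\alpha_j c}B_j^{\,c}=P_j$. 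The $P_j$ are polynomials in the commuting $B_j$, hence pairwise commuting idempotents, so $W_i(\alpha)^2=\prod_j P_j^2=\prod_j P_j=W_i(\alpha)$; and $W_i(\alpha)\in T(G)$ since it is a polynomial in the $B_j\in T(G)$.

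There is no deep obstacle in this particular statement: once (i) and (ii) are established, the argument is the standard ``discrete Fourier idempotent'' computation for a cyclic group action, carried out simultaneously across the $k$ commuting generators of $Z(G)$. The only point that needs care is the bookkeeping — making precise that we are genuinely working inside the commutative corner algebra $E_i^*T(G)E_i^*$ with identity $E_i^*$, so that the factorization of the $k$-fold sum and the reindexing mod $p$ are valid. The substantive work (mutual orthogonality of the $W_i(\alpha)$, their primitivity, and the resulting Wedderburn decomposition) is deferred to the later results.
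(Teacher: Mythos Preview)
Your argument is correct and takes a genuinely different route from the paper. The paper invokes Proposition~\ref{prop:kroneckerdec2} to write the $C_i,C_i$-block of each $A_j$ as a Kronecker product, then simultaneously diagonalizes all of the $A_j'$ (conjugating by $\bigotimes_m Q$ where $QPQ^{-1}=D=\diag(1,\zeta,\dots,\zeta^{p-1})$) and reads off that $W_i'(\alpha)$ is similar to a diagonal matrix with a single entry $1$ and zeros elsewhere --- hence an idempotent. Your proof instead factors $W_i(\alpha)=\prod_j P_j$ as a product of commuting ``Fourier idempotents'' and checks $P_j^2=P_j$ directly from $B_j^{\,p}=E_i^*$, avoiding coordinates and diagonalization altogether. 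What the paper's approach buys is the explicit rank-$1$ diagonal form, which it reuses verbatim in the proof of Proposition~\ref{prop:orth2} (orthogonality of distinct $W_i(\alpha)$); your factored form would handle orthogonality just as easily --- the same reindexing shows $P_j(\alpha_j)P_j(\beta_j)=0$ when $\alpha_j\not\equiv\beta_j\pmod p$ --- but you would be producing a new argument there rather than recycling this one.
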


\begin{proof}
    Fix $\alpha=(\alpha_1,\alpha_2,\cdots, \alpha_k)$ with each $\alpha_j\in \{0,1,2\cdots, p-1\}$. We note $W_i(\alpha)$ is only nonzero in the $C_i,C_i$ block. If we let $A_j'$ denote the $C_i,C_i$ block of $A_j$ the $C_i,C_i$ block of $W_i(\alpha)$ is just 
    \[W_i'(\alpha)=\frac{1}{p^k}\sum_{a_1,a_2,\cdots, a_k\in \{0,1,\cdots, p-1\}} \prod_{j=1}^k \zeta^{\alpha_j a_j}(A_j')^{a_j}.\]
    Hence, $W_i(\alpha)$ is an idempotent if and only if $W_i'(\alpha)$
    is an idempotent. By Proposition \ref{prop:kroneckerdec2}, $A_j'=\bigotimes_{m=1}^k M_{m,j}$ where $M_{m,j}=I$ for $m\neq k-j+1$ and $M_{k-j+1,j}$ is the matrix in (\ref{eq:tensor}), which we denote as $P$.
    Then
    \[W_i'(\alpha)=\frac{1}{p^k}\sum_{a_1,a_2,\cdots, a_k\in \{0,1,\cdots, p-1\}}\prod_{j=1}^k \zeta^{\alpha_j a_j} \bigotimes_{m=1}^k M_{m,j}^{a_j}.\]
    As $M_{m,j}=I$ for all $m$ expect $k-j+1$ we have
    \[\frac{1}{p^k}\sum_{a_1,a_2,\cdots, a_k\in \{0,1,\cdots, p-1\}}\prod_{j=1}^k \zeta^{\alpha_j a_j} \bigotimes_{m=1}^k M_{m,j}^{a_j}=\frac{1}{p^k}\sum_{a_1,a_2,\cdots, a_k\in \{0,1,\cdots, p-1\}} \zeta^{\alpha_j a_j} \bigotimes_{j=1}^k P^{a_j}.\]
    This last equality holds since for each term in the product we can move the coefficient $\zeta^{\alpha_j a_j}$ to be on the one non-identity term in the Kronecker product for each $j$. Then in the product we have in each position of the Kronecker product $\zeta^{\alpha_j a_j}P_j$ multiplied by the identity repeatedly. 
    
    Note that the matrix $P$ is diagaonlizable and is similar to 
    \begin{equation}\label{eq:diag}
        D=\begin{pmatrix}
        1 & & & \\  & \zeta & & \\ & & & \ddots & \\ & & & & \zeta^{p-1} 
    \end{pmatrix}.\end{equation}
    Say that $D=QPQ^{-1}$. Then conjugating by $\bigotimes_{m=1}^k Q$ we have
    \[\frac{1}{p^k}\sum_{a_1,a_2,\cdots, a_k\in \{0,1,\cdots, p-1\}} \zeta^{\alpha_j a_j} \bigotimes_{j=1}^k P^{a_j}\sim
    \bigotimes_{m=1}^k Q \left(\frac{1}{p^k}\sum_{a_1,a_2,\cdots, a_k\in \{0,1,\cdots, p-1\}} \zeta^{\alpha_j a_j}\bigotimes_{j=1}^k P^{a_j}\right) \bigotimes_{m=1}^k Q^{-1}\]
    \[=\frac{1}{p^k}\sum_{a_1,a_2,\cdots, a_k\in \{0,1,\cdots, p-1\}} \zeta^{\alpha_j a_j}\bigotimes_{j=1}^k QP^{a_j}Q^{-1}=\frac{1}{p^k}\sum_{a_1,a_2,\cdots, a_k\in \{0,1,\cdots, p-1\}} \zeta^{\alpha_j a_j}\bigotimes_{j=1}^k D^{a_j}.\]

    The only nonzero entries in this matrix are the diagonal entries. Note that in $\bigotimes_{m=1}^k D$ that each entry is a product of powers of $\zeta$. With this observation we let $(t_1,t_2,\cdots, t_k)$ denote the entry of $\bigotimes_{m=1}^k D$ corresponding to $\zeta^{t_1}\zeta^{t_2}\cdots \zeta^{t_k}$. With this notation the $(t_1,t_2,\cdots, t_k)$ entry of the above matrix is 
    \[\frac{1}{p^k}\sum_{a_1,a_2\cdots a_k} \zeta^{a_1\alpha_1}\zeta^{a_1t_1}\zeta^{a_2\alpha_2}\zeta^{a_2t_2}\cdots \zeta^{a_k\alpha_k}\zeta^{a_kt_k}=\frac{1}{p^k}\sum_{a_1,a_2,\cdots, a_k} \zeta^{a_1(\alpha_1+t_1)}\zeta^{a_2(\alpha_2+t_2)}\cdots \zeta^{a_k(\alpha_k+t_k)}.\]
    If $\alpha_j+t_j\not\equiv 0 \mod p$ for any $j$, then as we range over all $a_j\in \{0,1,\cdots, p-1\}$ we get each root of unity appearing in the sum. Therefore in this case
    \[\frac{1}{p^k}\sum_{a_1,a_2,\cdots, a_k} \zeta^{a_1(\alpha_1+t_1)}\zeta^{a_2(\alpha_2+t_2)}\cdots \zeta^{a_k(\alpha_k+t_k)}\]
    \[=\frac{1}{p^k}\sum_{a_j} \zeta^{a_j(\alpha_j+t_j)}\sum_{a_1,\cdots, a_{j-1},a_{j+1},\cdots a_k} \zeta^{a_1(\alpha_1+t_1)}\cdots \zeta^{a_{j-1}(\alpha_{j-1}+t_{j-1})}\zeta^{a_{j+1}(\alpha_{j+1}+t_{j+1})}\cdots \zeta^{a_{k}(\alpha_{k}+t_{k})}=0.\]
    To get a nonzero term we must have $t_j\equiv -\alpha_j \mod p$ for all $j$. As the $\alpha_j$ are fixed this means that there is a single nonzero diagonal entry in
    \[\frac{1}{p^k}\sum_{a_1,a_2,\cdots, a_k\in \{0,1,\cdots, p-1\}} \bigotimes_{j=1}^k \left(\zeta^{\alpha_j}D\right)^{a_j}.\]
    This is the $(-\alpha_1,\-\alpha_2,\cdots, -\alpha_k)$ entry (where we are taking each of these modulo $p$). This entry is $\frac{1}{p^k}\sum_{a_1,a_2,\cdots, a_k} 1=1$.
    
    We then have
    \[\left(\frac{1}{p^k}\sum_{a_1,a_2,\cdots, a_k\in \{0,1,\cdots, p-1\}} \bigotimes_{j=1}^k \left(\zeta^{\alpha_j}D\right)^{a_j}\right)^2=\frac{1}{p^k}\sum_{a_1,a_2,\cdots, a_k\in \{0,1,\cdots, p-1\}} \bigotimes_{j=1}^k \left(\zeta^{\alpha_j}D\right)^{a_j},\]
    so this matrix is an idempotent. Then as $W_i'(\alpha)$ is similar to this matrix, it must be an idempotent as well. We then have that $W_i(\alpha)$ is an idempotent since its only nonzero block is the $C_i,C_i$ block, which we found to be an idempotent.
\end{proof}

\begin{Prop}\label{prop:orth2}
    Suppose $|Z(G)|=p^k$ with $Z(G)=\langle z_1,z_2,\cdots, z_k\rangle$. If $G$ has nilpotency class $2$ then let $C_i$ be any conjugacy class of $G$ outside of $G'$ and if $G$ has nilpotency class $3$ then let $C_i\subseteq G'\setminus Z(G)$. Let $\zeta$ be any primitive $p$th root of unity. Then using the notation from Theorem \ref{thm:2idm} we have that if $\alpha\neq \beta$, then $W_i(\alpha)$ is orthogonal to $W_i(\beta)$.
\end{Prop}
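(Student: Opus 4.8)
The plan is to reuse the simultaneous diagonalization already carried out in the proof of Theorem \ref{thm:2idm}. Recall from there that $W_i(\alpha)$ is supported only on the $C_i,C_i$ block, and that this block $W_i'(\alpha)$ is a sum of Kronecker products of powers of the single matrix $P$ from (\ref{eq:tensor}), interleaved with identity factors. The essential point is that the conjugating matrix $S=\bigotimes_{m=1}^k Q$, where $D=QPQ^{-1}$ with $D$ as in (\ref{eq:diag}), does not depend on $\alpha$: the same $S$ simultaneously diagonalizes every block $A_j'$. Consequently $S\,W_i'(\alpha)\,S^{-1}$ and $S\,W_i'(\beta)\,S^{-1}$ are both diagonal, computed by exactly the argument of Theorem \ref{thm:2idm}.

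From that argument, $S\,W_i'(\alpha)\,S^{-1}$ is the diagonal matrix whose entries are all $0$ except for a single $1$ in the position indexed by $(-\alpha_1,-\alpha_2,\ldots,-\alpha_k)$, each coordinate read modulo $p$; likewise $S\,W_i'(\beta)\,S^{-1}$ has its unique nonzero entry $1$ in position $(-\beta_1,\ldots,-\beta_k)$. Since $\alpha\neq\beta$, there is an index $j$ with $\alpha_j\not\equiv\beta_j\pmod p$, so these two positions are distinct. The product of two diagonal matrices with disjoint supports is the zero matrix, in either order, so $(S\,W_i'(\alpha)\,S^{-1})(S\,W_i'(\beta)\,S^{-1})=0$ and the same holds with $\alpha$ and $\beta$ interchanged.

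Conjugating back, $W_i'(\alpha)W_i'(\beta)=S^{-1}(S\,W_i'(\alpha)\,S^{-1})(S\,W_i'(\beta)\,S^{-1})S=0$, and symmetrically $W_i'(\beta)W_i'(\alpha)=0$. Finally, since each $W_i(\gamma)$ vanishes outside the $C_i,C_i$ block, the product $W_i(\alpha)W_i(\beta)$ is governed entirely by the product of the $C_i,C_i$ blocks, namely $W_i'(\alpha)W_i'(\beta)=0$, and likewise $W_i(\beta)W_i(\alpha)=0$; hence $W_i(\alpha)$ and $W_i(\beta)$ are orthogonal.

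I do not anticipate a genuine obstacle here. The one thing that must be stated carefully is that $S$ is independent of $\alpha$ and $\beta$, so that the two idempotents are put into diagonal form by one and the same similarity — this is precisely what licenses the disjoint-support argument. A direct attack, multiplying out the double sum defining $W_i'(\alpha)W_i'(\beta)$ in terms of powers of $P$, would be far messier and is exactly what the common diagonalization sidesteps.
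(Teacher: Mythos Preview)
Your argument is correct and mirrors the paper's: both reduce to the $C_i,C_i$ block, conjugate by the single matrix $\bigotimes_{m=1}^k Q$ (which, as you rightly stress, is independent of $\alpha$ and $\beta$) to simultaneously diagonalize every $W_i'(\gamma)$, and then read off orthogonality from the fact that the diagonalized forms have their unique nonzero entry in distinct positions when $\alpha\neq\beta$. The only cosmetic difference is that you conclude idempotent orthogonality $W_i(\alpha)W_i(\beta)=0$, whereas the paper phrases the conclusion via the inner product $\langle W_i(\alpha),W_i(\beta)\rangle=\trace\bigl(W_i(\alpha)\overline{W_i(\beta)}^{T}\bigr)=0$, which is the form invoked later in the proof of Theorem~\ref{thm:cam2decomp}; both follow immediately from the same disjoint-support observation.
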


\begin{proof}
    Let $\alpha=(\alpha_1,\alpha_2,\cdots, \alpha_k)$ and $\beta=(\beta_1,\beta_2,\cdots, \beta_k)$ with $\alpha_j,\beta_j\in \{0,1,\cdots, p-1\}$ for all $j$. Then
    \[W_i(\alpha)=\frac{1}{p^k}\sum_{a_1,a_2,\cdots, a_k} \prod_{j=1}^k \zeta^{\alpha_j a_j}(E_i^*A_jE_i^*)^{a_j} \text{ and } W_i(\beta)=\frac{1}{p^k}\sum_{a_1,a_2,\cdots, a_k} \prod_{j=1}^k \zeta^{\beta_j a_j}(E_i^*A_jE_i^*)^{a_j}.\]
    Since both $W_i(\alpha)$ and $W_i(\beta)$ are $0$ outside of the $C_i,C_i$ block, the inner product of $W_i(\alpha)$ and $W_i(\beta)$ is the same as if we only considered the $C_i,C_i$ block of each. For this reason we set
    \[W_i'(\alpha)=\frac{1}{p^k}\sum_{a_1,a_2,\cdots, a_k\in \{0,1,\cdots, p-1\}} \prod_{j=1}^k \zeta^{\alpha_j a_j}(A_j')^{a_j} \text{ and }W_i'(\beta)=\frac{1}{p^k}\sum_{a_1,a_2,\cdots, a_k\in \{0,1,\cdots, p-1\}} \prod_{j=1}^k \zeta^{\beta_j a_j}(A_j')^{a_j}\]
    where $A_j'$ is the $C_i,C_i$ block of the matrix $A_j$.
    Let $P$ be the $p\times p$ matrix in (\ref{eq:tensor}).

    Using the same argument as in the proof of Theorem \ref{thm:2idm} we have  
    \[W_i'(\alpha)=\frac{1}{p^k}\sum_{a_1,a_2,\cdots, a_k\in \{0,1,\cdots, p-1\}} \zeta^{\alpha_j a_j}\bigotimes_{j=1}^k P^{a_j}\text{ and }W_i'(\beta)=\frac{1}{p^k}\sum_{a_1,a_2,\cdots, a_k\in \{0,1,\cdots, p-1\}} \zeta^{\beta_j a_j}\bigotimes_{j=1}^k P^{a_j}.\]
    Note that the matrix $P$ is similar to the matrix $D$ in (\ref{eq:diag}).

    Say $D=QPQ^{-1}$. Let $Q'=\bigotimes_{m=1}^k Q$. Then we note that $Q'W_i'(\alpha)\overline{W_i'(\beta)^T}(Q')^{-1}\sim W_i'(\alpha)\overline{W_i'(\beta)^T}$, so we have
    \[\langle W_i'(\alpha),W_i'(\beta)\rangle=\trace(W_i'(\alpha)\overline{W_i'(\beta)^T})=\trace(Q'W_i'(\alpha)\overline{W_i'(\beta)^T}(Q')^{-1})=\trace(Q'W_i'(\alpha)(Q')^{-1}Q'\overline{W_i'(\beta)^T}(Q')^{-1}).\]
    By the same argument as in the proof of Theorem \ref{thm:2idm}
    \[Q'W_i'(\alpha)(Q')^{-1}=\frac{1}{p^k}\sum_{a_1,a_2,\cdots, a_k\in \{0,1,\cdots, p-1\}} \zeta^{\alpha_j a_j}\bigotimes_{j=1}^k D^{a_j}\]
    and
    \[Q'W_i'(\beta)(Q')^{-1}=\frac{1}{p^k}\sum_{a_1,a_2,\cdots, a_k\in \{0,1,\cdots, p-1\}} \zeta^{\beta_j a_j}\bigotimes_{j=1}^k D^{a_j}.\]
    Hence, $\langle W_i'(\alpha),W_i'(\beta)\rangle$ equals
    \[\trace\left(\left(\frac{1}{p^k}\sum_{a_1,a_2,\cdots, a_k\in \{0,1,\cdots, p-1\}} \zeta^{\alpha_j a_j}\bigotimes_{j=1}^k D^{a_j}\right)\left(\overline{\frac{1}{p^k}\left(\sum_{a_1,a_2,\cdots, a_k\in \{0,1,\cdots, p-1\}} \zeta^{\beta_j a_j}\bigotimes_{j=1}^k D^{a_j}\right)^T}\right)\right).\]
    Using the notation from Theorem \ref{thm:2idm} the matrices
    \[\frac{1}{p^k}\sum_{a_1,a_2,\cdots, a_k\in \{0,1,\cdots, p-1\}} \zeta^{\alpha_j a_j}\bigotimes_{j=1}^k D^{a_j}\text{ and }\frac{1}{p^k}\sum_{a_1,a_2,\cdots, a_k\in \{0,1,\cdots, p-1\}} \zeta^{\beta_j a_j}\bigotimes_{j=1}^k D^{a_j}\]
    are diagonal matrices with a single nonzero entry of $1$ in diagonal positions $(-\alpha_1,-\alpha_2,\dots, -\alpha_k)$ and $(-\beta_1,-\beta_2,\cdots, -\beta_k)$ respectively (where each of these terms is found modulo $p$). This implies
    \[\overline{\frac{1}{p^k}\left(\sum_{a_1,a_2,\cdots, a_k\in \{0,1,\cdots, p-1\}} \zeta^{\beta_j a_j}\bigotimes_{j=1}^k D^{a_j}\right)^T}=\overline{\frac{1}{p^k}\sum_{a_1,a_2,\cdots, a_k\in \{0,1,\cdots, p-1\}} \zeta^{\beta_j a_j}\bigotimes_{j=1}^k D^{a_j}}\]
    \[=\frac{1}{p^k}\sum_{a_1,a_2,\cdots, a_k\in \{0,1,\cdots, p-1\}} \zeta^{\beta_j a_j}\bigotimes_{j=1}^k D^{a_j}.\]
    By assumption $\alpha\neq \beta$, so $(-\alpha_1,-\alpha_2,\cdots, -\alpha_k)\neq (-\beta_1,-\beta_2,\cdots, -\beta_k)$. This then implies 
    \[\left(\frac{1}{p^k}\sum_{a_1,a_2,\cdots, a_k\in \{0,1,\cdots, p-1\}} \zeta^{\alpha_j a_j}\bigotimes_{j=1}^k D^{a_j}\right)\left(\overline{\frac{1}{p^k}\left(\sum_{a_1,a_2,\cdots, a_k\in \{0,1,\cdots, p-1\}} \zeta^{\beta_j a_j}\bigotimes_{j=1}^k D^{a_j}\right)^T}\right)\]
    \[=\left(\frac{1}{p^k}\sum_{a_1,a_2,\cdots, a_k\in \{0,1,\cdots, p-1\}} \zeta^{\alpha_j a_j}\bigotimes_{j=1}^k D^{a_j}\right)\left(\frac{1}{p^k}\sum_{a_1,a_2,\cdots, a_k\in \{0,1,\cdots, p-1\}} \zeta^{\beta_j a_j}\bigotimes_{j=1}^k D^{a_j}\right)=0\]
    as these diagonal matrices have different nonzero diagonal entries. Therefore, $\langle W_i'(\alpha),W_i'(\beta)\rangle=0$. Recall this equals $\langle W_i(\alpha),W_i(\beta)\rangle$. Hence, $W_i(\alpha)$ and $W_i(\beta)$ are orthogonal.
\end{proof}

These turn out to be the only central idempotents in the nilpotency class $2$ case. As such, we can now prove the Wedderburn decomposition of the Terwilliger algebra in this case.

\begin{Thm}\label{thm:cam2decomp}
    Let $G$ have nilpotency class $2$ with $|G|=p^n$. Suppose $|Z(G)|=p^k$ Then the Wedderburn decomposition of $T(G)$ is
    \[T(G)=V\oplus \bigoplus_{i=1}^{p^{n-k}-1}\bigoplus_{\alpha}\mathcal{W}_i(\alpha),\]
    where $V$ is the primary component and each $\mathcal{W}_i(\alpha)$ is a one-dimensional ideal generated by a matrix $W_i(\alpha)$ (using the notation from Theorem \ref{thm:2idm}), where $\alpha=(\alpha_1,\alpha_2,\cdots, \alpha_k)$ with each $\alpha_j\in \{0,1,2,\cdots, p-1\}$ and not all $\alpha_j=0$.
\end{Thm}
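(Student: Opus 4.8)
The strategy is to identify the $W_i(\alpha)$ with $\alpha\neq 0$ as exactly the non-primary primitive central idempotents, and to pin down the number of non-primary Wedderburn blocks by a dimension count. Since $T(G)$ is closed under conjugate transpose and contains the identity, it is semisimple, hence a direct sum of matrix algebras, one per isomorphism class of irreducible $T(G)$-module; by Theorems~\ref{thm:acclassify} and~\ref{thm:tanaka} it is almost commutative, so every irreducible $T(G)$-module other than the primary one is $1$-dimensional. For the group association scheme the primary module $T(G)\hat e$ equals $\mathcal M\hat e$, where $\mathcal M:=\Span\{A_0,\dots,A_d\}$ is the adjacency (Bose--Mesner) algebra, because $E_i^*$ and left multiplication by the $A_l$ preserve $\mathcal M\hat e$; since $A_j\hat e$ is the characteristic vector $\widehat{C_j}$ of $C_j$, this module is $\Span\{\widehat{C_j}:0\le j\le d\}$, which has dimension $d+1$ and is irreducible. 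As $G'=Z(G)$ in the nilpotency class $2$ case (see the proof of Proposition~\ref{cor:p2dim}), $G$ has $m:=p^{n-k}-1+p^k$ conjugacy classes, so the primary component $V$ is an $m\times m$ matrix algebra while every other block is one-dimensional. Hence by Proposition~\ref{cor:p2dim} the number of non-primary blocks is $\dim T(G)-m^2=m(m-1)+p^n-m^2=p^n-m=(p^{n-k}-1)(p^k-1)$, which is precisely the number of pairs $(C_i,\alpha)$ with $C_i\not\subseteq G'$ and $\alpha\neq 0$.

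Next I would check that for each such pair, $W_i(\alpha)$ is a primitive central idempotent with $W_i(\alpha)T(G)=\mathbb C\,W_i(\alpha)$. It is an idempotent supported on the $C_i,C_i$ block by Theorem~\ref{thm:2idm}; write $A_j'$ and $W_i'(\alpha)$ for the $C_i,C_i$ blocks of $A_j$ and $W_i(\alpha)$. By Proposition~\ref{prop:kroneckerdec2} the $A_j'$ are pairwise commuting permutation matrices fixing the all-ones vector $\mathbf 1$, so $W_i'(\alpha)$ is a polynomial in them and, since $\alpha\neq 0$, $W_i'(\alpha)\mathbf 1=\frac1{p^k}\prod_j\big(\sum_{a_j}\zeta^{\alpha_j a_j}\big)\mathbf 1=0=({W_i'(\alpha)})^{T}\mathbf 1$; moreover the proof of Theorem~\ref{thm:2idm} shows $W_i'(\alpha)$ is conjugate to a diagonal $0$--$1$ matrix with a single $1$, so it is a rank-one idempotent. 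Centrality is checked on the generators $A_k,E_j^*$. Commuting with each $E_j^*$ is immediate from $W_i(\alpha)=E_i^*W_i(\alpha)E_i^*$. If $C_k\not\subseteq G'$, Lemma~\ref{lem:cam2outer} makes every block of $A_k$ either $0$ or an all-ones block $J$, and $W_i'(\alpha)J=0=JW_i'(\alpha)$ by the vanishing above, so $W_i(\alpha)A_k=0=A_kW_i(\alpha)$. If $C_k\subseteq G'=Z(G)$, Lemma~\ref{lem:cam3centeral} shows the only block of $A_k$ meeting the $C_i$-row or $C_i$-column is the $C_i,C_i$ block, equal to the permutation matrix $A_k'=\prod_j(A_j')^{c_j}$ with $C_k=\{z_1^{c_1}\cdots z_k^{c_k}\}$ (Lemma~\ref{lem:nil2structure2}), which commutes with the polynomial $W_i'(\alpha)$ in the $A_j'$; hence $W_i(\alpha)A_k=A_kW_i(\alpha)$. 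Thus $W_i(\alpha)$ is central, and $W_i(\alpha)T(G)=W_i(\alpha)T(G)W_i(\alpha)$ equals, on the $C_i,C_i$ block, $W_i'(\alpha)(E_i^*T(G)E_i^*)W_i'(\alpha)=\mathbb C\,W_i'(\alpha)$ because $W_i'(\alpha)$ is rank one; so $\mathcal W_i(\alpha):=W_i(\alpha)T(G)=\mathbb C\,W_i(\alpha)$ is a one-dimensional two-sided ideal.

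To assemble the decomposition, observe these idempotents are pairwise orthogonal: for $i\neq i'$ they are supported on the disjoint blocks $C_i,C_i$ and $C_{i'},C_{i'}$, while for fixed $i$ and $\alpha\neq\beta$ the proof of Proposition~\ref{prop:orth2} shows a common conjugation turns $W_i'(\alpha)$ and $W_i'(\beta)$ into diagonal matrices whose unique nonzero entries sit in different positions, so $W_i(\alpha)W_i(\beta)=0$. Hence the $(p^{n-k}-1)(p^k-1)$ ideals $\mathcal W_i(\alpha)$ are distinct Wedderburn blocks, each one-dimensional and so distinct from $V$ (which has dimension $m^2>1$). By the first paragraph there are exactly $(p^{n-k}-1)(p^k-1)$ non-primary blocks, so these exhaust them; equivalently $\dim\big(V\oplus\bigoplus_{i,\alpha\neq 0}\mathcal W_i(\alpha)\big)=m^2+(p^{n-k}-1)(p^k-1)=\dim T(G)$. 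Therefore $T(G)=V\oplus\bigoplus_{i=1}^{p^{n-k}-1}\bigoplus_{\alpha\neq 0}\mathcal W_i(\alpha)$ with $V=\big(1-\sum_{i,\alpha\neq 0}W_i(\alpha)\big)T(G)$ the primary component, which is the assertion.

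I expect the main obstacle to be the centrality argument in the second step: correctly combining the block descriptions of Lemmas~\ref{lem:cam2outer}, \ref{lem:cam3centeral} and Proposition~\ref{prop:kroneckerdec2} to see that $W_i(\alpha)$ commutes with every $A_k$. The decisive point is that $W_i'(\alpha)$ kills the all-ones blocks linking $C_i$ to the other classes, which holds precisely because $\alpha\neq 0$; this is also the structural reason the $\alpha=0$ idempotents are absorbed into the primary component rather than contributing extra one-dimensional blocks.
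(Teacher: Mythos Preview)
Your argument is correct and takes a genuinely different route from the paper's. The paper never verifies centrality of the $W_i(\alpha)$ directly; instead it identifies $W_i(0,\dots,0)$ with the all--ones block $v_{ii}\in V$ and then uses the trace--inner--product orthogonality of Proposition~\ref{prop:orth2} to place every $W_i(\alpha)$ with $\alpha\neq 0$ in the orthogonal complement $R$ of $V$; only after that does a dimension count force each ideal $\mathcal W_i(\alpha)$ to be one--dimensional. You instead prove that each $W_i(\alpha)$ with $\alpha\neq 0$ is a rank--one central idempotent outright: centrality by checking commutation with the generators $E_j^*$ and $A_k$ (using $W_i'(\alpha)\mathbf 1=\mathbf 1^{T}W_i'(\alpha)=0$ to annihilate the all--ones blocks coming from Lemma~\ref{lem:cam2outer}, and Lemmas~\ref{lem:cam3centeral}--\ref{lem:nil2structure2} for the central $A_k$), and one--dimensionality of the ideal from the rank--one description obtained inside the proof of Theorem~\ref{thm:2idm}. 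The dimension count then only serves to show these exhaust the non--primary blocks.

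What each approach buys: the paper's route is shorter because it never touches centrality and lets the count do the work, but it leans on the slightly implicit fact (visible in the proof of Proposition~\ref{prop:orth2}) that the $W_i(\alpha)$ are \emph{product}--orthogonal, not merely trace--orthogonal, which is what really makes the ``orthogonal ideals'' argument go through. Your route makes that product--orthogonality explicit, yields the primitivity of each $W_i(\alpha)$ before any counting, and isolates the key structural point---that $W_i'(\alpha)$ kills $\mathbf 1$ precisely when $\alpha\neq 0$---which is the mirror image of the paper's identification $W_i(0)=v_{ii}$.
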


\begin{proof}
    By Proposition \ref{cor:p2dim}, $\dim T(G)=p^{2n-2k}+3p^n-3p^{n-k}+p^{2k}-3p^k+2$. We know $V$ is an irreducible ideal with dimension equal to the number of conjugacy classes of $G$ squared. From the proof of Proposition \ref{cor:p2dim}, $\dim(V)=(p^{n-k}-1+p^k)^2$. We can then write $T(G)=V\oplus R$ where $\dim(R)=p^n-p^k-p^{n-k}+1=(p^k-1)(p^{n-k}-1)$. 

    From Proposition \ref{prop:orth2} we have, for each choice of $C_i\not\subseteq G'$, idempotents $W_i(\alpha)$ all of which are orthogonal. Clearly for $i\neq j$ we have $W_i(\alpha)$ and $W_j(\beta)$ are orthogonal as they are nonzero in different blocks. For any fixed $C_i\not\subseteq G'$, we consider $W_i(\alpha)$ when $\alpha=(0,0,\cdots, 0)$. We have
    \[W_i(\alpha)=\frac{1}{p^k}\sum_{a_1,a_2,\cdots, a_k\in \{0,1,\cdots, p-1\}} \prod_{j=1}^k (E_i^*A_jE_i^*)^{a_j}.\]
    By Lemma \ref{lem:nil2structure2}, $\prod_{j=1}^k (E_i^*A_jE_i^*)^{a_j}=E_i^*A_{\ell}E_i^*$ where $C_\ell=\{z_1^{a_1}z_2^{a_2}\cdots z_k^{a_k}\}$. This means $W_i(\alpha)$ is a scaled sum of the $C_i,C_i$ blocks of all the adjacency matrices corresponding to an element in $G'$. 

    Let $C_t$ be any class of $G$ outside of $G'$. Say $C_i=xZ(G)$ and $C_t=yZ(G)$. Then $C_iC_t=xyZ(G)$. Suppose $x\in xyZ(G)$. Then $x=xyz$ for some $z\in Z(G)$ and then $y^{-1}=z$. This implies $y^{-1}\in Z(G)$, which is false. Therefore, $C_i\not\subseteq C_iC_t$. By Lemma \ref{lem:cam2outer} the $C_i,C_i$ block of $A_t$ is all $0$. Hence, every adjacency matrix corresponding to a class outside $G'$ is $0$ in the $C_i,C_i$ block. As the sum of all the adjacency matrices is the all $1$'s matrix, the sum of all the adjacency matrices corresponding to classes outside $G'$ must be $0$ in the $C_i,C_i$ block. Hence, the sum of all the adjacency matrices corresponding to $C_t\subseteq G'$ must be the all $1$'s matrix in the $C_i,C_i$ block. Hence, $W_i(\alpha)$ has all $1$'s in the $C_i,C_i$ block and is $0$ outside of this block. Therefore, it is equal to the basis element of $V$ that is the all $1$'s matrix in block $C_i,C_i$ and $0$ outside this block. We denote this as $v_{ii}$. 

    Then for all $\beta\neq (0,0,\cdots, 0)$ we have $\langle W_i(\alpha),W_i(\beta)\rangle=0$, so $\langle v_{ii},W_i(\beta)\rangle=0$. Clearly $\langle v_{st},W_i(\beta)\rangle=0$ for all $s,t$ not both equal to $i$. Hence, for all $\beta\neq (0,0,\cdots, 0)$ we have $W_i(\beta)$ is orthogonal to $V$. Then $W_i(\beta)\in R$. As we chose $C_i$ arbitrarily, this is true for all $C_i\not\subseteq G'$.

    Now let $\mathcal{W}_i(\beta)$ be the ideal generated by $W_i(\beta)$. Then for all $\beta\neq (0,0,\cdots, 0)$, $\mathcal{W}_i(\beta)\subseteq R$. There are a total of $p^k-1$ choices for $\beta\neq (0,0,\cdots, 0)$. From the proof of Proposition \ref{cor:p2dim} there are $p^{n-k}-1$  classes $C_i\not\subseteq G'$. Hence, we have found $(p^k-1)(p^{n-k}-1)$ orthogonal ideals all in $R$. This equals the dimension of $R$, so each $\mathcal{W}_i(\beta)$ must be $1-$dimensional and thus irreducible. Decomposing $R$ into its irreducible components we have
    \[T(G)=V\oplus \bigoplus_{i=1}^{p^{n-k}-1}\bigoplus_{\alpha}\mathcal{W}_i(\alpha).\qedhere\]
\end{proof}

\begin{Cor}\label{cor:2prim}
    Let $G$ have nilpotency class $2$ with $|Z(G)|=p^k$. Then the non-primary primitive central idempotents of $T(G)$ are $W_i(\alpha)$ for any $\alpha=(\alpha_1,\alpha_2,\cdots, \alpha_k)$ where each $\alpha_j\in \{0,1,\cdots, p-1\}$ and not all $\alpha_j$ are $0$.
\end{Cor}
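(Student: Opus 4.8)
The plan is to read the corollary off directly from the Wedderburn decomposition established in Theorem \ref{thm:cam2decomp}, using the standard fact that in a finite-dimensional semisimple $\mathbb{C}$-algebra the primitive central idempotents are exactly the identity elements of the indecomposable two-sided ideal summands, this correspondence is a bijection, and the decomposition into such ideals is unique. So it suffices to identify the summands of $T(G)$ and their identities. First I would invoke Theorem \ref{thm:cam2decomp}, which gives $T(G)=V\oplus\bigoplus_{i=1}^{p^{n-k}-1}\bigoplus_\alpha\mathcal{W}_i(\alpha)$, where, from the proof of that theorem, $V$ is an irreducible (two-sided) ideal and each $\mathcal{W}_i(\alpha)$ is a one-dimensional (hence simple) two-sided ideal, with $\alpha=(\alpha_1,\dots,\alpha_k)$ ranging over all tuples in $\{0,\dots,p-1\}^k$ that are not identically zero. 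Since $V$ is a single indecomposable summand, its identity is the unique primary primitive central idempotent, and therefore every non-primary primitive central idempotent must be the identity element of one of the ideals $\mathcal{W}_i(\alpha)$.

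Next I would check that the identity element of $\mathcal{W}_i(\alpha)$ is $W_i(\alpha)$ itself. By Theorem \ref{thm:2idm}, $W_i(\alpha)$ is an idempotent of $T(G)$, and it is nonzero: its $C_i,C_i$ block is similar to the diagonal matrix with a single $1$ in position $(-\alpha_1,\dots,-\alpha_k)$, as computed in that proof. Since $\mathcal{W}_i(\alpha)$ is by definition the two-sided ideal generated by $W_i(\alpha)$ and is one-dimensional, we get $\mathcal{W}_i(\alpha)=\mathbb{C}\,W_i(\alpha)$, and a nonzero idempotent that spans an ideal is the identity of that ideal. As the identity of a two-sided ideal direct summand of a semisimple algebra, $W_i(\alpha)$ is automatically central in $T(G)$ and is primitive as a central idempotent (it cannot be a sum of two nonzero orthogonal central idempotents, since $\mathbb{C}\,W_i(\alpha)$ has no proper nonzero ideals).

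Finally I would assemble the conclusion. The ideals $\mathcal{W}_i(\alpha)$ are pairwise distinct and pairwise orthogonal: for $i\neq j$ they are supported in disjoint blocks, and for fixed $i$ and $\alpha\neq\beta$ orthogonality is Proposition \ref{prop:orth2}. Hence the elements $W_i(\alpha)$, for $C_i$ a class outside $G'$ and $\alpha\neq(0,\dots,0)$, are distinct non-primary primitive central idempotents, and by the uniqueness of the Wedderburn decomposition in Theorem \ref{thm:cam2decomp} there are no others. I do not anticipate a real obstacle here; the only points needing a line of care are that $W_i(\alpha)\neq0$ and that it acts as the identity on the one-dimensional ideal it generates, both of which are immediate from the computations already in the proofs of Theorem \ref{thm:2idm} and Theorem \ref{thm:cam2decomp}.
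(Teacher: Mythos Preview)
Your proposal is correct and matches the paper's approach: the paper states this corollary without proof, treating it as an immediate consequence of the Wedderburn decomposition in Theorem~\ref{thm:cam2decomp}, which is exactly what you do. Your extra care in verifying that each $W_i(\alpha)$ is nonzero and is the identity of its one-dimensional ideal simply makes explicit what the paper leaves implicit.
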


As a result of both $G/G'$ and $G'/Z(G)$ being elementary abelian groups for the nilpotency class $3$ case (Proposition \ref{thm:Camina3}) many of the arguments used for the nilpotency class $2$ case will also work in the nilpotency class $3$ case. We start with lemmas to aid in constructing the idempotents.

\begin{Lem}\label{lem:nil3stuctrure1}
    Suppose that $|Z(G)|=p^k$ for some $k>0$ and $|G'\colon Z(G)|=p^n$. Suppose that $Z(G)=\langle z_1, z_2,\cdots, z_k\rangle$. Suppose that $G$ has nilpotency class $3$ and let $C_i\subseteq G\setminus G'$ be a class. Let $C_{m,j}=\{z_m^j\}$ be one of the central classes of $G$. Then $(E_i^*A_{m,1}E_i^*)^j=E_i^*A_{m,j}E_i^*$ for all $j\in \{0,1,2,\cdots p-1\}$.
\end{Lem}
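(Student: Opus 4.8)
The plan is to run exactly the same induction on $j$ that was used in the proof of Lemma~\ref{lem:nil2stuctrure1}, but with the role of Lemma~\ref{lem:cam3centeral} replaced by Lemma~\ref{lem:cam3outer}. The first step is to record the entrywise description of the block $E_i^*A_{m,j}E_i^*$ that the induction feeds on. Since $C_i\subseteq G\setminus G'$, Proposition~\ref{lem:pcamclass} gives $C_i=xG'$, and since $z_m^j\in Z(G)\subseteq G'$ the class $C_{m,j}=\{z_m^j\}$ is a singleton lying in $G'$. Applying Lemma~\ref{lem:cam3outer}(2) with $C_k=C_{m,j}$ (so $|C_k|=1$), we get that for $x,y\in C_i$ the entry $(E_i^*A_{m,j}E_i^*)_{xy}$ equals $1$ iff $y\in x\{z_m^j\}$, i.e.\ iff $y=xz_m^j$, and that all other entries of $E_i^*A_{m,j}E_i^*$ (including everything outside the $C_i,C_i$ block) are $0$; note $xz_m^j\in xZ(G)\subseteq xG'=C_i$, so this single nonzero entry does sit inside the $C_i,C_i$ block.

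Next I would do the induction. The cases $j=0$ and $j=1$ are immediate. Assume $(E_i^*A_{m,1}E_i^*)^n=E_i^*A_{m,n}E_i^*$ for some $1\le n\le p-2$. Then
\[(E_i^*A_{m,1}E_i^*)^{n+1}=(E_i^*A_{m,n}E_i^*)(E_i^*A_{m,1}E_i^*),\]
and its $(x,y)$ entry is $\sum_{u\in G}(E_i^*A_{m,n}E_i^*)_{xu}(E_i^*A_{m,1}E_i^*)_{uy}$. By the entrywise description above, a summand is nonzero only when $u=xz_m^n$ and $y=uz_m=xz_m^{n+1}$; the first condition pins down $u$, so the sum has at most one nonzero term and the product is a $0/1$ matrix whose $(x,y)$ entry is $1$ iff $y=xz_m^{n+1}$. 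Since $z_m$ has order $p$ (as $Z(G)$ is elementary abelian by Proposition~\ref{prop:cam2center}), $z_m^{n+1}$ is genuinely the representative of the central class $C_{m,n+1}$, and the condition $y=xz_m^{n+1}$ is exactly the one characterizing the nonzero entries of $E_i^*A_{m,n+1}E_i^*$. Outside the $C_i,C_i$ block the product is clearly $0$, so $(E_i^*A_{m,1}E_i^*)^{n+1}=E_i^*A_{m,n+1}E_i^*$, closing the induction and giving the claim for all $j\in\{0,1,\dots,p-1\}$.

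I do not anticipate a real obstacle here: the only point that needs a moment's care is checking that the unique nonzero entry $xz_m^j$ of each block genuinely lies in $C_i$ (which follows from $C_i=xG'$ and $z_m^j\in Z(G)$) and that the exponents stay in the range $0\le j\le p-1$ where $z_m^j$ parametrizes distinct central classes (which is where elementary abelianness of $Z(G)$ enters). Beyond that, the argument is formally identical to the class-$2$ case of Lemma~\ref{lem:nil2stuctrure1}, with Lemma~\ref{lem:cam3outer} supplying the block structure in place of Lemma~\ref{lem:cam3centeral}.
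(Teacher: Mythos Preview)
Your proposal is correct and matches the paper's own approach exactly: the paper simply states that the proof is the same as that of Lemma~\ref{lem:nil2stuctrure1} with Lemma~\ref{lem:cam3outer} replacing Lemma~\ref{lem:cam3centeral}, which is precisely the substitution you carry out. Your extra remarks about $xz_m^j\in C_i$ and the elementary abelianness of $Z(G)$ are the natural sanity checks and introduce no deviation from the intended argument.
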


\begin{proof}
    The proof is similar to that of  Lemma \ref{lem:nil2stuctrure1} using Lemma \ref{lem:cam3outer} instead of Lemma \ref{lem:cam3centeral}.
\end{proof}

\begin{Lem}\label{lem:nil3structure2}
    Let $G$ have nilpotency class $3$. Suppose $|Z(G)|=p^k$ for some $k>0$ and $|G'\colon Z(G)|=p^n$. Suppose $Z(G)=\langle z_1, z_2,\cdots, z_k\rangle$. Let $C_i\subseteq G\setminus G'$ be a class. Suppose for $1\leq j\leq k$ we have $C_j=\{z_j\}$. 
    Let $C_\ell=\{z_1^{m_1}z_2^{m_2}\cdots z_k^{m_k}\}$ be any central class of $G$. Then $E_i^*A_\ell E_i^*=\prod_{j=1}^k (E_i^*A_{j}E_i^*)^{m_j}$.
\end{Lem}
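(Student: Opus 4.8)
The statement is the exact analogue of Lemma \ref{lem:nil2structure2} but with $C_i \subseteq G \setminus G'$ in the nilpotency class $3$ setting, where the relevant structural facts about the $C_i,C_i$ block of a central adjacency matrix come from Lemma \ref{lem:cam3outer} rather than Lemma \ref{lem:cam3centeral}. The plan is therefore to run the same induction on the number $L$ of nonzero exponents $m_j$ in $C_\ell = \{z_1^{m_1} \cdots z_k^{m_k}\}$, substituting Lemma \ref{lem:nil3stuctrure1} for Lemma \ref{lem:nil2stuctrure1} and Lemma \ref{lem:cam3outer} for Lemma \ref{lem:cam3centeral} throughout.

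First I would record the key local fact: since $C_i \subseteq G \setminus G'$, by Proposition \ref{lem:pcamclass} we have $C_i = xG'$ for any $x \in C_i$, and by part 2 of Lemma \ref{lem:cam3outer}, for a central class $C_m = \{z_m^j\}$ (which has $|C_m| = 1$), the $C_i,C_i$ block of $A_m$ has exactly one nonzero entry in each row; tracing through the proof of Lemma \ref{lem:cam3outer}, that entry in row $y$ is at column $y z_m^j$ (and $y z_m^j \in xG' = C_i$, so it indeed lands in the block). This is the analogue of the fact extracted from the proof of Lemma \ref{lem:cam3centeral} that was used in the class $2$ argument. Then the base cases $L = 0$ (both sides are the identity on the $C_i,C_i$ block and zero elsewhere) and $L = 1$ (immediate from Lemma \ref{lem:nil3stuctrure1}) go through verbatim.

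For the inductive step, assuming the claim for $L = n$, I would take $C_\alpha = \{z_1^{a_1} \cdots z_k^{a_k}\}$ with exactly $n+1$ nonzero exponents, WLOG the first $n+1$, factor $\prod_{j=1}^k (E_i^*A_jE_i^*)^{a_j} = \left(\prod_{j=1}^n (E_i^*A_jE_i^*)^{a_j}\right)(E_i^*A_{n+1}E_i^*)^{a_{n+1}}$, apply the inductive hypothesis to the first factor to rewrite it as $E_i^*A_\beta E_i^*$ with $C_\beta = \{z_1^{a_1}\cdots z_n^{a_n}\}$, and apply Lemma \ref{lem:nil3stuctrure1} to rewrite $(E_i^*A_{n+1}E_i^*)^{a_{n+1}} = E_i^*A_\gamma E_i^*$ with $C_\gamma = \{z_{n+1}^{a_{n+1}}\}$. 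Then I would compute the $(x,y)$ entry of $(E_i^*A_\beta E_i^*)(E_i^*A_\gamma E_i^*)$ as a sum over $u \in G$ and use the single-nonzero-entry-per-row fact from Lemma \ref{lem:cam3outer} to see the only surviving term is $u = x z_1^{a_1}\cdots z_n^{a_n}$, forcing $y = x z_1^{a_1}\cdots z_{n+1}^{a_{n+1}}$; comparing with the description of $E_i^*A_\alpha E_i^*$ gives equality on the $C_i,C_i$ block, and both sides vanish off that block, so $E_i^*A_\alpha E_i^* = \prod_{j=1}^k (E_i^*A_jE_i^*)^{a_j}$, completing the induction.

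I do not expect a real obstacle here: the proof is structurally identical to that of Lemma \ref{lem:nil2structure2}, and the only thing to check carefully is that Lemma \ref{lem:cam3outer} really does give the needed "one nonzero entry per row in the $C_i,C_i$ block, located at column $y z_m^j$" statement for central classes $C_m$ — which it does, since a central class has size $1$ and lies in $G'$, so part 2 of that lemma applies with $|C_m| = 1$. The mild care point is bookkeeping: making sure the relabeling/index-shifting is consistent and that one genuinely uses the class $3$ lemmas (not the class $2$ ones) at each invocation. Consequently I would likely state the proof in the paper simply as "The proof is similar to that of Lemma \ref{lem:nil2structure2}, using Lemma \ref{lem:nil3stuctrure1} in place of Lemma \ref{lem:nil2stuctrure1} and Lemma \ref{lem:cam3outer} in place of Lemma \ref{lem:cam3centeral}," after verifying the one structural fact above.
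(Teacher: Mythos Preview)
Your proposal is correct and matches the paper's own proof, which simply states that the argument is similar to that of Lemma \ref{lem:nil2structure2} with Lemmas \ref{lem:cam3outer} and \ref{lem:nil3stuctrure1} substituted for Lemmas \ref{lem:cam3centeral} and \ref{lem:nil2stuctrure1}. Your verification that part 2 of Lemma \ref{lem:cam3outer} yields the needed ``single nonzero entry in row $y$ at column $yz_m^j$'' fact (since central classes have size $1$) is exactly the check required to make the substitution go through.
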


\begin{proof}
    Similar to the proof of Lemma \ref{lem:nil2structure2} replacing Lemmas \ref{lem:cam3centeral} and \ref{lem:nil2stuctrure1} by Lemmas \ref{lem:cam3outer} and \ref{lem:nil3stuctrure1}.
\end{proof}

\begin{Prop}\label{prop:kroneckerdec3}
    Let $G$ have nilpotency class $3$. Suppose $|Z(G)|=p^k$ for some $k>0$ and $|G'\colon Z(G)|=p^n$. Suppose that $Z(G)=\langle z_1, z_2,\cdots, z_k\rangle$. Let $C_j\subseteq G\setminus G'$. Let $C_i=\{z_i\}$. Then the $C_j,C_j$ block of $A_i$ can be written as $\bigotimes_{m=1}^{n+k} M_m$ where each $M_m$ is a $p\times p$ matrix with $M_m=I$ for $m\neq n+k-i+1$ and
    \begin{equation}\label{eq:tensor3}
    M_{n+k-i+1}=\begin{pmatrix}
        0 & 1 & 0 & \cdots & 0 \\
        0 & 0 & 1 & \cdots & 0 \\
        \vdots & \vdots & & \ddots & \vdots \\
        0 & 0 & 0 & \cdots & 1 \\
        1 & 0 & 0 & \cdots & 0
    \end{pmatrix}\end{equation}
    for a specific ordering of the elements $G'$.
\end{Prop}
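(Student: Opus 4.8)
The plan is to follow the proof of Proposition~\ref{prop:kroneckerdec2} almost verbatim, the one new ingredient being a normal form on $G'$ that extends the one used there on $Z(G)$. By Proposition~\ref{lem:pcamclass} we may write $C_j=xG'$, so the $C_j,C_j$ block of $A_i$ is a $p^{n+k}\times p^{n+k}$ matrix, matching the size of $\bigotimes_{m=1}^{n+k}M_m$. Since $G'/Z(G)$ is elementary abelian of order $p^n$ (Proposition~\ref{thm:Camina3}), I would fix $w_1,\dots,w_n\in G'$ whose images form an $\mathbb{F}_p$-basis of $G'/Z(G)$; then $\{w_1^{b_1}\cdots w_n^{b_n}:0\le b_\ell\le p-1\}$ is a transversal of $Z(G)$ in $G'$, and combining this with the unique expression $z_1^{a_1}\cdots z_k^{a_k}$ for elements of $Z(G)$ (elementary abelian by Proposition~\ref{prop:cam2center}, with $Z(G)=\langle z_1,\dots,z_k\rangle$) shows every element of $G'$ has a unique normal form $w_1^{b_1}\cdots w_n^{b_n}z_1^{a_1}\cdots z_k^{a_k}$ with $0\le b_\ell,a_m\le p-1$. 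One could add that $G'$ is abelian, since $[\gamma_2,\gamma_2]\le\gamma_4=\{e\}$, but this is not needed.

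Next I would order $G'$, and hence $xG'$ via $g\mapsto xg$, by listing the normal-form coordinate tuples lexicographically so that the exponent of $z_1$ varies fastest, then $z_2$, and so on up to $z_k$, with the $w_\ell$-exponents as the $n$ slowest-varying coordinates; under this ordering the $z_i$-coordinate occupies Kronecker slot $n+k-i+1$. By Lemma~\ref{lem:cam3outer}, for $x',y\in xG'$ one has $(A_i)_{x'y}=1$ exactly when $y=x'z_i$. Since $z_i$ is central of order $p$, passing to normal form shows that $x'\mapsto x'z_i$ increments the $z_i$-coordinate cyclically by $1$ modulo $p$ and fixes every other coordinate. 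Hence the $C_j,C_j$ block of $A_i$ is the permutation matrix of a permutation of the product set $\{0,\dots,p-1\}^{n+k}$ that is the identity on every coordinate except slot $n+k-i+1$, on which it is the cyclic shift $c\mapsto c+1\bmod p$.

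To finish I would use the elementary fact that a coordinatewise permutation of a product set has, in the lexicographic ordering, permutation matrix equal to the Kronecker product of the permutation matrices of its coordinate maps --- a one-line entrywise check. The cyclic shift $c\mapsto c+1$ has permutation matrix precisely the matrix $P$ of \eqref{eq:tensor3}, and the identity has matrix $I_p$, so the block equals $\bigotimes_{m=1}^{n+k}M_m$ with $M_m=I$ for $m\ne n+k-i+1$ and $M_{n+k-i+1}=P$, which is exactly the assertion.

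The main obstacle is the normal-form step: producing a presentation of $G'$ in which right multiplication by each central generator $z_i$ is a single cyclic coordinate shift. This is precisely where one uses that \emph{both} $G'/Z(G)$ and $Z(G)$ are elementary abelian, together with $z_i^p=e$ for the wraparound. An alternative, closer to the inductive style of Proposition~\ref{prop:kroneckerdec2}, would induct on $k$ using that $G/\langle z_\ell\rangle$ is again Camina (Proposition~\ref{prop:cammod}); but then one must control the nilpotency class and the center of these quotients, which the direct argument sidesteps.
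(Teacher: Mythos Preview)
Your argument is correct, and the ordering you set up is exactly the one the paper uses (the paper calls your $w_\ell$'s by $g_\ell$, lists $Z(G)$ with $z_1$ fastest via the sequences $\Lambda(m_2,\dots,m_k)$, then runs through the coset representatives $\upsilon_s$ of $G'/Z(G)$ in the analogous way, and finally lists $G'$ as $\upsilon_1\Sigma,\dots,\upsilon_{p^n}\Sigma$). Where you diverge is in how you conclude. The paper, having fixed this ordering, simply says ``the rest of the proof is very similar to the proof of Proposition~\ref{prop:kroneckerdec2}'' and thus implicitly carries over the induction on $k$ that appears there, passing to quotients $G/\langle z_\ell\rangle$ via Proposition~\ref{prop:cammod}. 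You instead give a direct, non-inductive argument: once you know from Lemma~\ref{lem:cam3outer} that the block is the permutation matrix of $x'\mapsto x'z_i$, and that in normal-form coordinates this map is a cyclic shift on a single slot, the Kronecker factorisation drops out of the general fact about permutation matrices on product sets. Your route is shorter and sidesteps the bookkeeping you flagged at the end (tracking nilpotency class and center under successive quotients); the paper's route has the virtue of literally recycling the earlier proof. Either is fine here.
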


\begin{proof}
    Let $A_i'$ denote the $C_j,C_j$ block of $A_i$ and say $C_j=xG'$. We order the elements of $Z(G)$ as follows. First let $\Lambda$ be the sequence $1,z_1,z_1^2,\cdots, z_1^{p-1}$. We define $\Lambda(m_2,m_3,\cdots, m_k)=z_k^{m_k}z_{k-1}^{m_{k-1}}\cdots z_2^{m_2}\cdot\Lambda$. Now we order the elements of $Z(G)$ as \\ $\Lambda, \Lambda(1,0,\cdots, 0),\Lambda(2,0,\cdots, 0), \cdots, \Lambda(p-1,0,\cdots, 0), \Lambda(0,1,\cdots, 0), \Lambda(1,1,\cdots, 0),\cdots, \Lambda(p-1,1,\cdots, 0),\\ \Lambda(0,2,\cdots 0), \Lambda(1,2,\cdots 0),\cdots, \Lambda(p-1,p-1,0\cdots, 0),\Lambda(0,0,1,\cdots 0),\Lambda(1,0,1,\cdots, 0),\\ \cdots,  \Lambda(p-1,p-1,\cdots, p-1)$. Let $\Sigma$ denote this sequence of elements of $Z(G)$.

    By Theorem \ref{thm:Camina3}, $G'/Z(G)$ is elementary abelian, say $G'/Z(G)=\langle \overline{g_1},\overline{g_2},\cdots, \overline{g_n}\rangle$, where each $g_i\in G$ is a representative in $G'$ of a generator of $G'/Z(G)$. First let $\Upsilon$ be the sequence $1,g_1,g_1^2,\cdots, g_1^{p-1}$. We define $\Upsilon(m_2,m_3,\cdots, m_n)=g_n^{m_n}g_{n-1}^{m_{n-1}}\cdots g_2^{m_2}\cdot\Upsilon$. Now we order the representatives of $G'/Z(G)$ as $\Upsilon, \Upsilon(1,0,\cdots, 0),\Upsilon(2,0,\cdots, 0), \cdots, \Upsilon(p-1,0,\cdots, 0), \Upsilon(0,1,\cdots, 0), \Upsilon(1,1,\cdots, 0),\cdots, \Upsilon(p-1,1,\cdots,0), \\ \Upsilon(0,2,\cdots 0),\Upsilon(1,2,\cdots 0),\cdots,\Upsilon(p-1,p-1,0\cdots, 0),\Upsilon(0,0,1,\cdots 0),\Upsilon(1,0,1,\cdots, 0),\cdots, \\ \Upsilon(p-1,p-1,\cdots, p-1)$. Let $\upsilon_i$ be the $i$th element of this sequence of elements of $G'$. We now order $G'$ as $\upsilon_1\Sigma, \upsilon_2 \Sigma,\cdots, \upsilon_{p^n}\Sigma$.  

    With this ordering the rest of the proof is very similar to the proof of Proposition \ref{prop:kroneckerdec2} using Lemmas \ref{lem:cam3outer} and \ref{lem:nil3structure2} instead of Lemmas \ref{lem:cam3centeral} and \ref{lem:nil2structure2}.
    \end{proof}

    We now have the tools needed to construct a second type of idempotent for the nilpotency class $3$ case. We later prove they are orthogonal to one another.

    \begin{Thm}\label{thm:3idm}
   Let $G$ have nilpotency class $3$. Suppose $|Z(G)|=p^k$, $k>0$, and $|G'\colon Z(G)|=p^n$. Suppose that $Z(G)=\langle z_1, z_2,\cdots, z_k\rangle$. Let $C_j=\{z_j\}$ be the class containing $z_j$ for any $1\leq j\leq k$. Let $C_i\subseteq G\setminus G'$ be a class. Let $\zeta$ be any primitive $p$th root of unity. Then
    \[X_i(\alpha)=\frac{1}{p^k}\sum_{a_1,a_2,\cdots, a_k\in \{0,1,\cdots, p-1\}} \prod_{j=1}^k \zeta^{\alpha_j a_j}(E_i^*A_jE_i^*)^{a_j}\]
    is an idempotent of $T(G)$ for any $\alpha=(\alpha_1,\alpha_2,\cdots, \alpha_k)$ with each $\alpha_j\in \{0,1,\cdots, p-1\}$.
\end{Thm}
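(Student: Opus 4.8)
The plan is to run the argument of Theorem~\ref{thm:2idm} essentially verbatim, substituting Proposition~\ref{prop:kroneckerdec3} for Proposition~\ref{prop:kroneckerdec2}, since the only change from the earlier case is that we now work inside the $C_i,C_i$ block for a class $C_i\subseteq G\setminus G'$ rather than $C_i\subseteq G'\setminus Z(G)$. First note $X_i(\alpha)\in T(G)$: each $E_i^*A_jE_i^*$ lies in $T(G)$, and $X_i(\alpha)$ is a linear combination of products of powers of these. Since $E_i^*A_jE_i^*$ is supported only on the $C_i,C_i$ block, so is $X_i(\alpha)$; hence $X_i(\alpha)$ is idempotent if and only if its $C_i,C_i$ block
\[
X_i'(\alpha)=\frac{1}{p^k}\sum_{a_1,\dots,a_k\in\{0,\dots,p-1\}}\prod_{j=1}^{k}\zeta^{\alpha_j a_j}(A_j')^{a_j}
\]
is idempotent, where $A_j'$ denotes the $C_i,C_i$ block of $A_j$.

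Next I would invoke Proposition~\ref{prop:kroneckerdec3}: for $1\le j\le k$ we have $A_j'=\bigotimes_{m=1}^{n+k}M_{m,j}$ with $M_{m,j}=I_p$ for $m\neq n+k-j+1$ and $M_{n+k-j+1,j}=P$, the cyclic permutation matrix in (\ref{eq:tensor3}). The first $n$ tensor slots are identically $I_p$ across all these $A_j'$ and simply factor out; in the remaining slots $n+1,\dots,n+k$ the non-identity factor for index $j$ sits in slot $n+k-j+1$, and since $j\mapsto n+k-j+1$ is injective on $\{1,\dots,k\}$, distinct values of $j$ occupy distinct slots. Exactly as in Theorem~\ref{thm:2idm}, this lets each scalar $\zeta^{\alpha_j a_j}$ be absorbed onto the single non-identity factor, giving
\[
X_i'(\alpha)=I_{p^n}\otimes\frac{1}{p^k}\sum_{a_1,\dots,a_k}\bigotimes_{j=1}^{k}\bigl(\zeta^{\alpha_j}P\bigr)^{a_j}.
\]

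Finally, diagonalize $P$: write $D=QPQ^{-1}$ with $D$ as in (\ref{eq:diag}), and conjugate $X_i'(\alpha)$ by $I_{p^n}\otimes\bigotimes_{m=1}^{k}Q$ to replace $P$ by $D$ throughout. Indexing the diagonal entries of $\bigotimes_{j=1}^k D^{a_j}$ by $(t_1,\dots,t_k)$ as in Theorem~\ref{thm:2idm}, the $(t_1,\dots,t_k)$ entry of the resulting matrix is $\frac{1}{p^k}\sum_{a_1,\dots,a_k}\prod_{j=1}^k\zeta^{a_j(\alpha_j+t_j)}$, which factors as a product of geometric sums and equals $1$ when $t_j\equiv-\alpha_j\pmod p$ for every $j$ and $0$ otherwise. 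Thus the conjugate of $X_i'(\alpha)$ is a diagonal $0$--$1$ matrix with a single nonzero entry, hence idempotent; therefore so are $X_i'(\alpha)$ and $X_i(\alpha)$. I expect the only real work is the index bookkeeping in the middle step — confirming that the $n$-fold leading identity tensor factor splits off cleanly and that the scalar-absorption step of Theorem~\ref{thm:2idm} transfers without collision; everything after diagonalization is the identical root-of-unity orthogonality computation and needs no new idea.
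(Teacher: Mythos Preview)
Your proposal is correct and follows exactly the approach the paper takes: its proof of Theorem~\ref{thm:3idm} simply says to rerun the argument of Theorem~\ref{thm:2idm} with Proposition~\ref{prop:kroneckerdec3} in place of Proposition~\ref{prop:kroneckerdec2}, and your write-up does precisely this, including the key observation that the leading $I_{p^n}$ tensor factor splits off cleanly. One small slip to fix in a final write-up: after conjugation, $X_i'(\alpha)$ is $I_{p^n}$ tensored with a rank-one diagonal projection, so it has $p^n$ nonzero diagonal entries rather than a single one --- but this is of course still idempotent, so the conclusion is unaffected.
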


\begin{proof}
    This is similar to the proof of Theorem \ref{thm:2idm} using Proposition \ref{prop:kroneckerdec3} instead of Proposition \ref{prop:kroneckerdec2}.
\end{proof}

\begin{Prop}\label{prop:orth3}
    Let $G$ have nilpotency class $3$. Suppose $|Z(G)|=p^k$, $k>0$, and $|G'\colon Z(G)|=p^n$. Suppose that $Z(G)=\langle z_1, z_2,\cdots, z_k\rangle$. Let $C_j=\{z_j\}$ be the class containing $z_j$ for $1\leq j\leq k$. Let $C_i\subseteq G\setminus G'$ be a class and let $\zeta$ be any primitive $p$th root of unity. Then using the notation from Theorem \ref{thm:3idm} if $\alpha\neq \beta$, then $X_i(\alpha)$ is orthogonal to $X_i(\beta)$.
\end{Prop}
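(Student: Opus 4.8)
The plan is to run the proof of Proposition \ref{prop:orth2} essentially verbatim, with Proposition \ref{prop:kroneckerdec3} replacing Proposition \ref{prop:kroneckerdec2}. Since both $X_i(\alpha)$ and $X_i(\beta)$ vanish outside the $C_i,C_i$ block, the inner product $\langle X_i(\alpha),X_i(\beta)\rangle$ equals the inner product of the restricted matrices on that block, so first I would pass to $X_i'(\alpha)=\frac{1}{p^k}\sum_{a_1,\dots,a_k}\prod_{j=1}^k\zeta^{\alpha_j a_j}(A_j')^{a_j}$, where $A_j'$ denotes the $C_i,C_i$ block of $A_j$. By Proposition \ref{prop:kroneckerdec3} each $A_j'=\bigotimes_{m=1}^{n+k}M_{m,j}$ with $M_{m,j}=I_p$ except at the single position $m=n+k-j+1$, where $M_{m,j}$ is the $p\times p$ cyclic permutation matrix $P$ of (\ref{eq:tensor3}). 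Exactly as in the proof of Theorem \ref{thm:2idm} one moves each scalar $\zeta^{\alpha_j a_j}$ onto the unique non-identity Kronecker factor and rewrites the product, obtaining (up to the fixed permutation of the last $k$ tensor positions dictated by the ordering of $G'$ in Proposition \ref{prop:kroneckerdec3}) the expression $X_i'(\alpha)=\frac{1}{p^k}\sum_{a_1,\dots,a_k} I_{p^n}\otimes\bigotimes_{j=1}^k(\zeta^{\alpha_j}P)^{a_j}$. The only new feature relative to the nilpotency class $2$ case is precisely this leading factor $I_{p^n}$, which appears because the relevant block is indexed by $G'$ rather than by $Z(G)$.

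Next I would diagonalize. Write $D=QPQ^{-1}$, where $D$ is the diagonal matrix of (\ref{eq:diag}) and $Q$ is chosen to be the (unitary) discrete Fourier matrix; then $Q':=\bigotimes_{m=1}^{n+k}Q$ is unitary, so $\langle X_i'(\alpha),X_i'(\beta)\rangle=\trace\big(Q'X_i'(\alpha)(Q')^{-1}\,\overline{(Q'X_i'(\beta)(Q')^{-1})^T}\big)$. The same computation as in Theorem \ref{thm:2idm} — the root-of-unity filter $\frac{1}{p}\sum_{a_j}\zeta^{a_j(\alpha_j+t_j)}=0$ whenever $\alpha_j+t_j\not\equiv 0\pmod p$ — shows that $Q'X_i'(\alpha)(Q')^{-1}=I_{p^n}\otimes E_\alpha$, where $E_\alpha$ is the $p^k\times p^k$ diagonal $\{0,1\}$-matrix whose unique nonzero entry lies in the diagonal position labeled $(-\alpha_1,\dots,-\alpha_k)$ modulo $p$, and similarly for $\beta$. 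These matrices are real, so the conjugate-transpose is trivial, and $\langle X_i'(\alpha),X_i'(\beta)\rangle=\trace\big((I_{p^n}\otimes E_\alpha)(I_{p^n}\otimes E_\beta)\big)=p^n\,\trace(E_\alpha E_\beta)$. Since $\alpha\neq\beta$ there is an index $j$ with $\alpha_j\neq\beta_j$, hence $-\alpha_j\not\equiv-\beta_j\pmod p$, so $E_\alpha$ and $E_\beta$ have disjoint supports and $E_\alpha E_\beta=0$. Therefore $\langle X_i'(\alpha),X_i'(\beta)\rangle=0$, and since this equals $\langle X_i(\alpha),X_i(\beta)\rangle$, the idempotents $X_i(\alpha)$ and $X_i(\beta)$ are orthogonal.

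I do not expect a serious obstacle; the one point needing care is the bookkeeping inherited from Proposition \ref{prop:kroneckerdec3}, namely verifying that the chosen ordering of $G'$ makes $\prod_{j=1}^k(A_j')^{a_j}$ factor as a Kronecker product acting as the identity on the first $n$ ``$G'/Z(G)$'' coordinates and coordinate-wise by powers of $P$ on the last $k$ ``$Z(G)$'' coordinates, just as in the class $2$ case. Once this reduction is in place, tensoring with $I_{p^n}$ is the only change, the disjoint-support argument for $E_\alpha$ and $E_\beta$ is identical to the one in Proposition \ref{prop:orth2}, and the conclusion follows.
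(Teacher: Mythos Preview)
Your proposal is correct and follows essentially the same approach as the paper, which simply says to mimic the proof of Proposition \ref{prop:orth2} using Theorem \ref{thm:3idm} (hence Proposition \ref{prop:kroneckerdec3}) in place of Theorem \ref{thm:2idm}. Your identification of the extra $I_{p^n}$ tensor factor as the only new feature is exactly the point, and your choice of a unitary $Q$ is a clean way to handle the conjugate-transpose step.
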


\begin{proof}
    The proof of this Proposition is similar to that of Proposition \ref{prop:orth2} using Theorem \ref{thm:3idm} in place of Theorem \ref{thm:2idm}.
\end{proof}

Now we now prove that these new idempotents we found are orthogonal to $V$.

\begin{Prop}\label{prop:orth3v}
    Let $G$ have nilpotency class $3$. Suppose $|Z(G)|=p^k$, $k>0$, and $|G'\colon Z(G)|=p^n$. Suppose that $Z(G)=\langle z_1, z_2,\cdots, z_k\rangle$ and $G'/Z(G)=\langle g_1Z(G), g_2Z(G),\cdots, g_nZ(G)\rangle$. Let $C_i\subseteq G\setminus G'$ be a class. For $1\leq j\leq k$ we let $C_{j}=\{z_j\}$ be a central class of $G$. Let $\zeta$ be any primitive $p$th root of unity. Let $X_i(\beta)$ be any idempotent of $T(G)$ from Theorem \ref{thm:3idm} for any $\beta=(\beta_1,\beta_2,\cdots \beta_k)$ with $\beta_j\in \{0,1,\cdots, p-1\}$ not all $0$. Then $X_i(\beta)$ is orthogonal to $V$, where $V$ is the primary component of $T(G)$.
\end{Prop}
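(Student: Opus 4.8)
The plan is to verify $\langle X_i(\beta),v\rangle=0$ for every element $v$ of a basis of the primary ideal $V$. Recall from the discussion around Theorem \ref{thm:cam2decomp} that, $V$ being the primary component, it is a two-sided ideal of dimension the square of the number of conjugacy classes of $G$, with basis $\{v_{st}\}$ where $v_{st}$ is the all $1$'s matrix on the $C_s\times C_t$ block and $0$ elsewhere; in fact $v_{st}=E_s^*JE_t^*$ with $J=\sum_k A_k$ the all $1$'s matrix, so each $v_{st}\in T(G)$. Since $X_i(\beta)$ is a sum of products of matrices of the form $E_i^*A_jE_i^*$, it is supported entirely on the $C_i\times C_i$ block; hence $\langle X_i(\beta),v_{st}\rangle=0$ automatically whenever $(s,t)\neq(i,i)$, and only $v_{ii}$ remains to be treated.

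Because $v_{ii}$ is a real symmetric $0$--$1$ matrix, $\langle X_i(\beta),v_{ii}\rangle=\trace\bigl(X_i(\beta)v_{ii}\bigr)$, which is precisely the sum of all entries of the $C_i\times C_i$ block $X_i'(\beta)$ of $X_i(\beta)$. I would then record the Kronecker form of $X_i'(\beta)$ coming from the computation in the proof of Theorem \ref{thm:3idm} (that is, the argument of the proof of Theorem \ref{thm:2idm} now fed by Proposition \ref{prop:kroneckerdec3} in place of Proposition \ref{prop:kroneckerdec2}): for each central class $C_j=\{z_j\}$ the $C_i\times C_i$ block of $A_j$ has the form $I_{p^n}\otimes(\cdots)$, since the $G'/Z(G)$--coordinates contribute only identity factors, and therefore
\[X_i'(\beta)=I_{p^n}\otimes\left(\frac{1}{p^k}\sum_{a_1,\dots,a_k\in\{0,\dots,p-1\}}\Bigl(\prod_{j=1}^{k}\zeta^{\beta_j a_j}\Bigr)\bigotimes_{j=1}^{k}P^{a_j}\right),\]
where $P$ is the $p\times p$ cyclic permutation matrix of (\ref{eq:tensor3}).

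Next I would compute the sum of all entries directly from this permutation form --- crucially \emph{without} first conjugating $P$ to the diagonal matrix $D$ of (\ref{eq:diag}), as the sum-of-all-entries functional is not invariant under that similarity. The sum of all entries of a Kronecker product is the product of the entry sums; the entry sum of any $p\times p$ permutation matrix is $p$, and the entry sum of $I_{p^n}$ is $p^n$. Hence the sum of all entries of $X_i'(\beta)$ equals
\[p^n\cdot\frac{1}{p^k}\sum_{a_1,\dots,a_k}p^{k}\prod_{j=1}^{k}\zeta^{\beta_j a_j}=p^n\prod_{j=1}^{k}\left(\sum_{a_j=0}^{p-1}\zeta^{\beta_j a_j}\right).\]
Since $\beta\neq(0,\dots,0)$, some $\beta_j\neq 0$; as $0<\beta_j<p$ and $\zeta$ is a primitive $p$th root of unity, $\zeta^{\beta_j}\neq 1$, so the geometric sum $\sum_{a_j=0}^{p-1}\zeta^{\beta_j a_j}$ vanishes. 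Thus the sum of all entries of $X_i'(\beta)$ is $0$, i.e. $\langle X_i(\beta),v_{ii}\rangle=0$, and with the first paragraph this gives $X_i(\beta)\perp V$.

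The work is essentially bookkeeping rather than conceptual. The one place requiring care is the extraction of $X_i'(\beta)=I_{p^n}\otimes(\cdots)$ from Proposition \ref{prop:kroneckerdec3} --- that a central class acts as the identity on the $G'/Z(G)$ coordinates --- and the genuine pitfall to avoid is the diagonalization $D=QPQ^{-1}$ used in Theorem \ref{thm:3idm} and Proposition \ref{prop:orth3}: unlike the trace inner product appearing there, the sum-of-entries quantity governing orthogonality to $V$ is not preserved by that similarity, so the argument must stay with the permutation matrices.
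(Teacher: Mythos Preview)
Your proof is correct and is actually cleaner than the paper's argument. Both proofs begin the same way: reduce to the single basis element $v_{ii}$, pass to the $C_i\times C_i$ block, and extract the Kronecker form $X_i'(\beta)=I_{p^n}\otimes\bigl(\frac{1}{p^k}\sum_{a}\prod_j\zeta^{\beta_j a_j}\bigotimes_j P^{a_j}\bigr)$ from Proposition~\ref{prop:kroneckerdec3}. From there the paper proceeds indirectly: it first identifies $X_i'(0)=\frac{1}{p^k}I_{p^n}\otimes J_{p^k}$ via an induction showing $\sum_{a}\bigotimes_j P^{a_j}=J_{p^k}$, then invokes the already-proved orthogonality $\langle X_i'(\beta),X_i'(0)\rangle=0$ from Proposition~\ref{prop:orth3} to deduce the auxiliary identity $\trace\bigl((\sum_a\zeta^{\beta\cdot a}\bigotimes_j P^{a_j})J_{p^k}\bigr)=0$, and finally reuses that identity to evaluate $\langle X_i'(\beta),v_{ii}'\rangle=\langle X_i'(\beta),J_{p^n}\otimes J_{p^k}\rangle$. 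You bypass this detour entirely by observing that $\langle X_i'(\beta),J\rangle$ is simply the sum of all entries of $X_i'(\beta)$, and computing that sum directly from the permutation-matrix form: the entry sum of a Kronecker product factors, each $P^{a_j}$ contributes $p$, and the resulting character sum $\prod_j\sum_{a_j}\zeta^{\beta_j a_j}$ vanishes because some $\beta_j\neq 0$. Your route is shorter, self-contained (it does not need Proposition~\ref{prop:orth3} as input), and your remark about avoiding the diagonalization $D=QPQ^{-1}$ is apt --- the paper does not diagonalize in this particular proof either, but you are right that it would be the natural mistake to make coming from Theorems~\ref{thm:2idm} and~\ref{thm:3idm}.
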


\begin{proof}
    For any $\beta=(\beta_1,\beta_2,\cdots, \beta_k)$ we have
    \[X_i(\beta)=\frac{1}{p^k}\sum_{a_1,a_2,\cdots, a_k\in \{0,1,\cdots, p-1\}} \prod_{j=1}^k \zeta^{\beta_j a_j}(E_i^*A_jE_i^*)^{a_j}.\]
    Since $X_i(\beta)$ is $0$ outside of the $C_i,C_i$ block, the inner product of $X_i(\beta)$ with any matrix is the same as if we only considered the $C_i,C_i$ block of each. For this reason we set
    \[X_i'(\beta)=\frac{1}{p^k}\sum_{a_1,a_2,\cdots, a_k\in \{0,1,\cdots, p-1\}} \prod_{j=1}^k \zeta^{\beta_j a_j}(A_j')^{a_j}\]
    where $A_j'$ is the $C_i,C_i$ block of the matrix $A_j$. 
    Let $P$ be the $p\times p$ matrix in (\ref{eq:tensor3}).
     Then using an argument similar to that used in the proof of Theorem \ref{thm:2idm} we have 
    \[X_i'(\beta)=\frac{1}{p^k}\sum_{a_1,a_2,\cdots, a_k\in \{0,1,\cdots, p-1\}} \bigotimes_{i=1}^n I_p\otimes \zeta^{\alpha_j a_j} \bigotimes_{j=1}^{k} P^{a_j}=\frac{1}{p^k}I_{p^n}\otimes \sum_{a_1,a_2,\cdots, a_k}\zeta^{\alpha_j a_j} \bigotimes_{j=1}^{k} P^{a_j}.\]

    Let us now consider $X_i'(\alpha)$ when $\alpha=(0,0,\cdots, 0)$. We have 
    \[X_i'(\alpha)=\frac{1}{p^k}\sum_{a_1,a_2,\cdots, a_k\in \{0,1,\cdots, p-1\}} \bigotimes_{i=1}^n I_p\otimes \bigotimes_{j=1}^{k} P^{a_j}=\frac{1}{p^k}I_{p^n}\otimes \sum_{a_1,a_2,\cdots, a_k} \bigotimes_{j=1}^k P^{a_j}.\]
    We claim that $\sum_{a_1,a_2,\cdots, a_k} \bigotimes_{j=1}^k P^{a_j}=J_{p^k}$. We proceed by induction on $k\geq 1$. For $k=1$,
    \[\sum_{a_1,a_2,\cdots, a_k} \bigotimes_{j=1}^k P^{a_j}=\sum_{i=0}^{p-1}P^i=J_p\]
    so the result is true for $k=1$. Suppose the result is true for some $m\geq 1$. That is, $\sum_{a_1,a_2,\cdots, a_m} \bigotimes_{j=1}^m P^{a_j}=J_{p^m}$. We consider $m+1$. We have
    \[\sum_{a_1,a_2,\cdots, a_{m+1}} \bigotimes_{j=1}^{m+1} P^{a_j}=\sum_{a_1=0}^{p-1}\sum_{a_2,a_3,\cdots, a_{m+1}} \bigotimes_{j=1}^{m+1}P^{a_j}=\sum_{a_1=0}^{p-1}P^{a_1}\otimes \sum_{a_2,a_3,\cdots, a_{m+1}}\bigotimes_{j=2}^{m+1}P^{a_j}.\]
    The first term is a sum of all the powers of $P$, and is thus $J_p$. The second term, by the inductive hypothesis, is $J_{p^m}$. Then this equals $J_p\otimes J_{p^m}=J_{p^{m+1}}$.
    This proves the inductive step. Hence, $\sum_{a_1,a_2,\cdots, a_k} \bigotimes_{j=1}^k P^{a_j}=J_{p^k}$. We then have $X_i'(\alpha)=\frac{1}{p^k}I_{p^n}\otimes J_{p^k}$.

    Now for any $\beta=(\beta_1,\beta_2,\cdots, \beta_k)$, with not all the $\beta_j=0$, we have by Proposition \ref{prop:orth3}, $\langle X_i'(\beta),X_i'(\alpha)\rangle=0$. Thus,
    \[0=\langle X_i'(\beta),X_i'(\alpha)\rangle=\trace\left(\left(\frac{1}{p^k}I_{p^n}\otimes \sum_{a_1,a_2,\cdots, a_k}\zeta^{\beta_j a_j} \bigotimes_{j=1}^{k} P^{a_j}\right)\left(\overline{\frac{1}{p^k}I_{p^n}\otimes J_{p^k}}\right)^T\right)\]
    \[=\trace\left(\left(\frac{1}{p^k}I_{p^n}\otimes \sum_{a_1,a_2,\cdots, a_k}\zeta^{\beta_j a_j} \bigotimes_{j=1}^{k} P^{a_j}\right)\left(\frac{1}{p^k}I_{p^n}\otimes J_{p^k}\right)\right)=\trace\left(\frac{1}{p^{2k}}I_{p^n}\otimes\left(\sum_{a_1,a_2,\cdots, a_k}\zeta^{\beta_j a_j} \bigotimes_{j=1}^{k} P^{a_j}\right)J_{p^k}\right).\]
    We note since the first term in the Kronecker product is $I_{p^n}$ this trace is just the trace of \\ $\left(\sum_{a_1,a_2,\cdots, a_k}\zeta^{\beta_j a_j} \bigotimes_{j=1}^{k} P^{a_j}\right)J_{p^k}$ multiplied by $\frac{p^n}{p^{2k}}$ since in the trace computation we just add up the trace of $\left(\sum_{a_1,a_2,\cdots, a_k}\zeta^{\beta_j a_j} \bigotimes_{j=1}^{k} P^{a_j}\right)J_{p^k}$ a total $p^n$ times. We then have
    \[0=\frac{p^n}{p^{2k}}\trace\left(\left(\sum_{a_1,a_2,\cdots, a_k}\zeta^{\beta_j a_j} \bigotimes_{j=1}^{k} P^{a_j}\right)J_{p^k}\right).\]
    So $\trace\left(\left(\sum_{a_1,a_2,\cdots, a_k}\zeta^{\beta_j a_j} \bigotimes_{j=1}^{k} P^{a_j}\right)J_{p^k}\right)=0$.

    We now let $v_{ii}$ be the basis element of $V$ with all $1$'s in the $C_i,C_i$ block. We let $v'_{ii}$ be the $C_i,C_i$ block of $v_{ii}$. We have $v'_{ii}=J_{p^{n+k}}=J_{p^n}\otimes J_{p^k}$. Observe
    \[\langle X_i'(\beta),v'_{ii}\rangle=\trace\left(\left(\frac{1}{p^k}I_{p^n}\otimes \sum_{a_1,a_2,\cdots, a_k}\zeta^{\beta_j a_j} \bigotimes_{j=1}^{k} P^{a_j}\right)\left(\overline{J_{p^n}\otimes J_{p^k}}\right)^T\right)\]
    \[=\trace\left(\left(\frac{1}{p^k}I_{p^n}\otimes \sum_{a_1,a_2,\cdots, a_k}\zeta^{\beta_j a_j} \bigotimes_{j=1}^{k} P^{a_j}\right)\left(J_{p^n}\otimes J_{p^k}\right)\right)=\trace\left(\frac{1}{p^{k}}J_{p^n}\otimes\left(\sum_{a_1,a_2,\cdots, a_k}\zeta^{\beta_j a_j} \bigotimes_{j=1}^{k} P^{a_j}\right)J_{p^k}\right).\]
    Since the first term in the Kronecker product has all $\frac{1}{p^k}$'s along its main diagonal, the trace we are computing is $\frac{p^n}{p^k}\trace\left(\left(\sum_{a_1,a_2,\cdots, a_k}\zeta^{\beta_j a_j} \bigotimes_{j=1}^{k} P^{a_j}\right)J_{p^k}\right)=0$. Hence, $\langle X_i'(\beta),v'_{ii}\rangle=0$, and so $\langle X_i(\beta),v_{ii}\rangle=0$. For any $s,t$ not both $i$ we have $\langle X_i(\beta),v_{st}\rangle=0$ as $X_i(\beta)$ is only nonzero in the $C_i,C_i$ block and $v_{st}$ is all $0$ in the $C_i,C_i$ block. Hence, for any $\beta=(\beta_1,\beta_2,\cdots \beta_k)$ with $\beta_j\in \{0,1,\cdots, p-1\}$ not all $0$, $X_i(\beta)$ is orthogonal to $V$. 
\end{proof}

We now consider the third type of idempotent for $T(G)$ using a similar argument to the first two. 

\begin{Lem}\label{lem:nil3stuctrure1'}
    Let $G$ have nilpotency class $3$. Suppose $|Z(G)|=p^k$, $k>0$, and $|G'\colon Z(G)|=p^n$. Suppose that $G'/Z(G)=\langle g_1Z(G), g_2Z(G),\cdots, g_nZ(G)\rangle$. Let $C_i\subseteq G\setminus G'$ be a class. Let $C_{m,j}=g_m^jZ(G)$ be one of the classes of $G$. Then $(E_i^*A_{m,1}E_i^*)^j=p^{(j-1)k}E_i^*A_{m,j}E_i^*$ for all $j\in \{1,2,\cdots p-1\}$. Furthermore $(E_i^*A_{m,0}E_i^*)^0=E_i^*A_{m,0}E_i^*$.
\end{Lem}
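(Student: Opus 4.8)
The plan is to adapt the path-counting argument behind Lemma~\ref{lem:nil3stuctrure1}: the only structural change is that the conjugacy classes $C_{m,j}=g_m^jZ(G)$ now have size $|Z(G)|=p^k$ rather than $1$, and this is exactly what produces the scalar $p^{(j-1)k}$. First I would pin down the block structure of the $A_{m,j}$. For $1\le j\le p-1$ the element $g_m^j$ lies in $G'\setminus Z(G)$, since $g_mZ(G)$ has order $p$ in the elementary abelian group $G'/Z(G)$ (Proposition~\ref{thm:Camina3}); hence its conjugacy class is $C_{m,j}=g_m^jZ(G)\subseteq G'$, and $A_{m,j}$ is a genuine adjacency matrix of $T(G)$. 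Since $C_i\subseteq G\setminus G'$, Lemma~\ref{lem:cam3outer}(2) and its proof apply: writing $C_i=xG'$, for $x,y\in C_i$ we have $(A_{m,j})_{xy}=1$ if and only if $y\in xC_{m,j}=xg_m^jZ(G)$. So $E_i^*A_{m,j}E_i^*$ is the $0$--$1$ matrix, supported on the $C_i,C_i$ block, whose $(x,y)$ entry is $1$ exactly when $y\in xg_m^jZ(G)$; in particular each of its rows has exactly $p^k$ ones.

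Next I would prove the formula by induction on $j$, in the style of Lemmas~\ref{lem:nil2stuctrure1} and~\ref{lem:nil3stuctrure1}. The base case $j=1$ reads $p^{0}=1$ and is immediate. For the step $j\to j+1$ — valid precisely while $j+1\le p-1$, so that $C_{m,j+1}$ is still a conjugacy class — I would use the inductive hypothesis to write $(E_i^*A_{m,1}E_i^*)^{j+1}=p^{(j-1)k}(E_i^*A_{m,j}E_i^*)(E_i^*A_{m,1}E_i^*)$ and then expand the product entrywise. For $x,y\in C_i$, a term $(E_i^*A_{m,j}E_i^*)_{xu}(E_i^*A_{m,1}E_i^*)_{uy}$ equals $1$ exactly when $u=xg_m^jz$ and $y=ug_mz'$ for some $z,z'\in Z(G)$; since $Z(G)$ is central this forces $y=xg_m^jz\,g_mz'=xg_m^{j+1}(zz')$, so the $(x,y)$ entry vanishes unless $y\in xg_m^{j+1}Z(G)$, and for a fixed such $y=xg_m^{j+1}w$ the surviving terms are indexed by the pairs $(z,z')\in Z(G)^2$ with $zz'=w$ — of which there are exactly $|Z(G)|=p^k$, and distinct $z$ give distinct $u$. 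Hence, by the first paragraph, $(E_i^*A_{m,j}E_i^*)(E_i^*A_{m,1}E_i^*)=p^k\,E_i^*A_{m,j+1}E_i^*$, and so $(E_i^*A_{m,1}E_i^*)^{j+1}=p^{(j-1)k}\cdot p^k\,E_i^*A_{m,j+1}E_i^*=p^{jk}E_i^*A_{m,j+1}E_i^*$, closing the induction. (Directly: $[(E_i^*A_{m,1}E_i^*)^j]_{xy}$ counts walks $x=u_0,u_1,\dots,u_j=y$ with each $u_t\in u_{t-1}g_mZ(G)$; such a walk is recorded by a tuple $(z_1,\dots,z_j)\in Z(G)^j$ via $u_t=xg_m^tz_1\cdots z_t$, so for $y=xg_m^jw$ the number of walks is the number of such tuples with $z_1\cdots z_j=w$, namely $|Z(G)|^{j-1}=p^{(j-1)k}$.)

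For the final assertion, note that $g_m^0=e$, so $C_{m,0}$ is the trivial class and $A_{m,0}=A_0=I$; then $E_i^*A_{m,0}E_i^*=E_i^*$ is idempotent and therefore equals each of its powers, in particular its $0$th power, which is the identity of the corner algebra $E_i^*T(G)E_i^*$. The one thing that genuinely needs care is the bookkeeping inside the entrywise product — which vertex each central factor multiplies, together with the repeated commuting of central elements past $g_m$ — and the attendant fact that the scalar accumulates \emph{multiplicatively}, picking up a factor $p^k$ at each step (hence $p^{(j-1)k}$ overall) rather than additively; this is ultimately just the statement that the number of factorizations $zz'=w$ in the group $Z(G)$ is $|Z(G)|$. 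I do not anticipate any obstacle beyond this routine verification, and I would make a point of noting explicitly that the induction only manufactures classes $C_{m,j+1}$ with $j+1\le p-1$, which matches the stated range $j\in\{1,\dots,p-1\}$.
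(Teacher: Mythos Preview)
Your proposal is correct and follows essentially the same approach as the paper: both set up the block description via Lemma~\ref{lem:cam3outer}, induct on $j$, and verify the inductive step by counting the intermediate vertices $u$ (your parametrization by pairs $(z,z')\in Z(G)^2$ with $zz'=w$ is exactly the paper's count of $u\in xg_m^\ell Z(G)$, repackaged). The direct walk-counting remark you add is a pleasant alternative phrasing of the same computation, not a different argument.
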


\begin{proof}
    The statement $(E_i^*A_{m,1}E_i^*)^0=E_i^*A_{m,0}E_i^*$ is true as both are the identity in the $C_i,C_i$ block and $0$ outside this block. We now consider the first part. From the proof of Lemma \ref{lem:cam3outer} we have, for all $x,y\in C_i$, $(E_i^*A_{m,j} E_i^*)_{xy}=1$ if and only if $y\in xC_{m,j}=xg_m^jZ(G)$. We proceed by induction on $j$. For $j=1$, the result is clear. We now suppose the result is true for some $\ell\geq 1$, with $\ell\leq p-2$. Then $(E_i^*A_{m,1}E_i^*)^\ell=p^{(\ell-1)k}E_i^*A_{m,\ell}E_i^*$, and so  $(E_i^*A_{m,1}E_i^*)^{\ell+1}=(E_i^*A_{m,1}E_i^*)^\ell(E_i^*A_{m,1}E_i^*)=p^{(\ell-1)k}(E_i^*A_{m,\ell}E_i^*)(E_i^*A_{m,1}E_i^*)$.
    We observe
    \[[(E_i^*A_{m,\ell}E_i^*)(E_i^*A_{m,1}E_i^*)]_{xy}=\sum_{u\in G}(E_i^*A_{m,\ell}E_i^*)_{xu}(E_i^*A_{m,1}E_i^*)_{uy}.\]
    From Lemma \ref{lem:cam3outer}, $(E_i^*A_{m,\ell}E_i^*)_{xu}(E_i^*A_{m,1}E_i^*)_{uy}=1$ if and only if $u\in xg_m^\ell Z(G)$ and $y\in ug_m Z(G)=xg_m^{\ell+1}Z(G)$.
    There are $p^k$ different choices for $u$, namely those in $xg_m^\ell Z(G)$, that can make the first term in each product nonzero. For each of these choices of $u$, if $y\not\in ug_m^{\ell}Z(G)$ then $(E_i^*A_{m,\ell}E_i^*)_{xu}(E_i^*A_{m,1}E_i^*)_{uy}=0$ and if $y\in ug_m^\ell Z(G)$, then $(E_i^*A_{m,\ell}E_i^*)_{xu}(E_i^*A_{m,1}E_i^*)_{uy}=1$. Thus, if $y\in xg_{m}^{\ell+1}Z(G)$, we have a nonzero term in our sum for each of the $u\in xg_m^\ell Z(G)$. This gives a total of $p^k$ nonzero terms in the sum, each of which equals 1. Thus, if $y\in xg_m^{\ell+1}Z(G)$, the $(x,y)$ entry of $(E_i^*A_{m,\ell}E_i^*)(E_i^*A_{m,1}E_i^*)$ is $p^k$. If $y\not\in xg_{m}^{\ell+1}Z(G)$, then $(E_i^*A_{m,\ell}E_i^*)_{xu}(E_i^*A_{m,1}E_i^*)_{uy}=0$ for all $u$ as either $u\not\in g_m^\ell Z(G)$, or $y\not\in ug_m^\ell Z(G)$. Thus, the $(x,y)$ entry of  $(E_i^*A_{m,\ell}E_i^*)(E_i^*A_{m,1}E_i^*)$ is $0$ in this case.
    
    Hence, the $(x,y)$ entry of $(E_i^*A_{m,\ell}E_i^*)(E_i^*A_{m,1}E_i^*)$ is $p^k$ if and only if $y\in xz_m^{\ell+1}Z(G)$. From the proof of Lemma \ref{lem:cam3outer} this is identical to the condition for the $(x,y)$ entry of $p^k E_i^*A_{m,\ell+1}E_i^*$ to be $p^k$. Hence, $[(E_i^*A_{m,\ell}E_i^*)(E_i^*A_{m,1}E_i^*)]_{xy}=p^k(E_i^*A_{m,\ell+1}E_i^*)_{xy}$ for all $x,y\in C_{i}$. Thus, \[E_i^*A_{\beta}E_i^*=(E_i^*A_{m,1}E_i^*)^{\ell+1}=p^{(\ell-1)k}(E_i^*A_{m\ell}E_i^*)(E_i^*A_{m,1}E_i^*)=p^{\ell k}E_{i}^*A_{m,\ell+1}E_i^*.\] Then by induction we have $(E_i^*A_{m,1}E_i^*)^j=p^{(j-1)k}E_i^*A_{m,j}E_i^*$ for all $j\in \{1,\cdots p-1\}$. 
\end{proof}

\begin{Lem}\label{lem:nil3structure2'}
   Let $G$ have nilpotency class $3$. Suppose $|Z(G)|=p^k$, $k>0$, $|G'\colon Z(G)|=p^n$, and $G'/Z(G)=\langle g_1Z(G), g_2Z(G),\cdots, g_nZ(G)\rangle$. Let $C_i\subseteq G\setminus G'$ be a class.
    Let $C_\ell=g_1^{m_1}g_2^{m_2}\cdots g_n^{m_n}Z(G)$ be any conjugacy class of $G$ $(0\leq m_i< p)$. Then $E_i^*A_\ell E_i^*=p^k\prod_{j=1}^n \frac{1}{p^{m_j k}} (E_i^*A_{j}E_i^*)^{m_j}$.
\end{Lem}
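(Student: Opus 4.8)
The plan is to follow the template of the proof of Lemma~\ref{lem:nil2structure2}, but to carry along the powers of $p^k$ that now appear because every conjugacy class contained in $G'\setminus Z(G)$ is a full coset of $Z(G)$ and hence has $p^k$ elements. As in that proof, every matrix in sight is supported on the $C_i,C_i$ block, so it suffices to argue inside that block. Write $C_\ell=g_1^{m_1}\cdots g_n^{m_n}Z(G)$ and let $L=\#\{j : m_j\neq 0\}$. Since $G'/Z(G)$ is elementary abelian with basis $\{g_jZ(G)\}$ (Proposition~\ref{thm:Camina3}), the product $g_1^{m_1}\cdots g_n^{m_n}$ lies in $Z(G)$ only when every $m_j=0$; so for $C_\ell$ to be an honest conjugacy class we need $L\geq 1$, and by Proposition~\ref{lem:pcamclass} it is then a class inside $G'\setminus Z(G)$. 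I would induct on $L$.

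The computational heart of the argument is the identity
\[(E_i^*A_\beta E_i^*)(E_i^*A_\gamma E_i^*)=p^k\,E_i^*A_\alpha E_i^*,\]
valid for any classes $C_\beta,C_\gamma\subseteq G'$, where $C_\alpha$ is the class equal to the set $C_\beta C_\gamma$ (a single coset of $Z(G)$ in $G'$ because $Z(G)\trianglelefteq G$, and nontrivial whenever $C_\beta$ or $C_\gamma$ is). Two observations give it. First, by Lemma~\ref{lem:cam3outer} the matrix $A_\beta$ carries a row indexed by $x\in C_i$ only into columns indexed by elements of $xC_\beta$, and $xC_\beta\subseteq xG'=C_i$; hence $(E_i^*A_\beta E_i^*)(E_i^*A_\gamma E_i^*)=E_i^*A_\beta A_\gamma E_i^*$. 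Second, in the adjacency algebra $A_\beta A_\gamma=p^kA_\alpha$: using that $Z(G)$ is central and $G'/Z(G)$ abelian, every element of the coset $C_\alpha=C_\beta C_\gamma$ has exactly $|Z(G)|=p^k$ factorizations as a product of an element of $C_\beta$ with one of $C_\gamma$, so $p_{\beta\gamma}^\alpha=p^k$ while $p_{\beta\gamma}^h=0$ for $h\neq\alpha$. (Equivalently, one reads this off the explicit block structure in Lemma~\ref{lem:cam3outer}, exactly as the corresponding step is done in Lemma~\ref{lem:nil2structure2}.)

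Now the induction. For $L=1$, say $m_1$ is the only nonzero exponent; Lemma~\ref{lem:nil3stuctrure1'} gives $(E_i^*A_1E_i^*)^{m_1}=p^{(m_1-1)k}E_i^*A_\ell E_i^*$, so the claimed right-hand side $p^k\prod_{j=1}^n \frac{1}{p^{m_jk}}(E_i^*A_jE_i^*)^{m_j}=p^{(1-m_1)k}(E_i^*A_1E_i^*)^{m_1}$ collapses to $E_i^*A_\ell E_i^*$. For the step, suppose the formula holds whenever $L=s$, and take $C_\alpha=g_1^{a_1}\cdots g_n^{a_n}Z(G)$ with exactly $s+1$ nonzero exponents, reordered so that $a_1,\dots,a_{s+1}\neq 0$. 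Put $C_\beta=g_1^{a_1}\cdots g_s^{a_s}Z(G)$ and $C_\gamma=g_{s+1}^{a_{s+1}}Z(G)$, so $C_\beta C_\gamma=C_\alpha$. Apply the inductive hypothesis to $E_i^*A_\beta E_i^*$, apply Lemma~\ref{lem:nil3stuctrure1'} to rewrite $E_i^*A_\gamma E_i^*$ in terms of $(E_i^*A_{s+1}E_i^*)^{a_{s+1}}$, plug both into the displayed identity, and collect the powers of $p^k$ (one $p^{-k}$ from the identity, $p^k$ from the hypothesis, $p^{(1-a_{s+1})k}$ from Lemma~\ref{lem:nil3stuctrure1'}, and $p^{a_{s+1}k}$ from normalizing the trailing factor so it joins the product): the exponents sum to $k$, giving $E_i^*A_\alpha E_i^*=p^k\prod_{j=1}^{s+1}\frac{1}{p^{a_jk}}(E_i^*A_jE_i^*)^{a_j}$, which equals $p^k\prod_{j=1}^{n}\frac{1}{p^{a_jk}}(E_i^*A_jE_i^*)^{a_j}$ since the remaining factors have exponent $0$. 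This completes the induction.

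The argument has no real conceptual obstacle beyond the one already handled in Lemma~\ref{lem:nil2structure2}; the only place care is needed is the exponent bookkeeping, namely verifying that the $p^k$ produced by the multiplicative identity $(E_i^*A_\beta E_i^*)(E_i^*A_\gamma E_i^*)=p^kE_i^*A_\alpha E_i^*$ cancels exactly against the $p^{(j-1)k}$ factors coming from Lemma~\ref{lem:nil3stuctrure1'}, so that a single overall factor of $p^k$ survives.
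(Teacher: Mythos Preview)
Your proof is correct and follows essentially the same route as the paper: induction on the number $L$ of nonzero exponents, with Lemma~\ref{lem:nil3stuctrure1'} handling the base case and the inductive step reducing to the identity $(E_i^*A_\beta E_i^*)(E_i^*A_\gamma E_i^*)=p^{k}E_i^*A_\alpha E_i^*$. The only cosmetic difference is that you derive this identity by dropping the inner $E_i^*$ and invoking the Bose--Mesner product $A_\beta A_\gamma=p^{k}A_\alpha$, whereas the paper verifies it by a direct entry-by-entry count in the style of Lemma~\ref{lem:nil2structure2}.
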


\begin{proof}
    Let $L$ be the number of nonzero $m_j$ in $C_\ell=g_1^{m_1}g_2^{m_2}\cdots g_n^{m_n}Z(G)$. We proceed by induction on $L$. For $L=1$ we assume, without loss of generality, that $m_1$ is the only nonzero $m_j$. Then 
    \[\prod_{j=1}^n \frac{1}{p^{\gamma_j k}}(E_i^*A_{j}E_i^*)^{m_j}=\frac{p^k}{p^{m_1k}}(E_i^*A_jE_i^*)^{m_1}=\frac{1}{p^{(m_1-1)k}}(E_i^*A_{j}E_i^*)^{m_1}.\]
    By Lemma \ref{lem:nil3stuctrure1'}, this equals $E_i^*A_\ell E_i^*$. So the result is true for $L=1$. Now assume the result is true for some $1\leq L\leq n-1$. That is, for all $C_\ell=g_1^{m_1}g_2^{m_2}\cdots g_n^{m_n}Z(G)$ with $L$ of the $m_j\neq 0$ we have
    \[E_i^*A_\ell E_i^*=p^k\prod_{j=1}^n \frac{1}{p^{m_j k}} (E_i^*A_{j}E_i^*)^{m_j}.\]
    Now we consider $C_\alpha=g_1^{a_1}g_2^{a_2}\cdots g_n^{a_n}Z(G)$ in which $L+1$ of the $a_j\neq 0$. Without loss of generality, we may assume the first $L+1$ of the $a_j\neq 0$. Then we have
    \[p^k\prod_{j=1}^n \frac{1}{p^{a_j k}}(E_i^*A_{j}E_i^*)^{a_j}=p^k\prod_{j=1}^{L+1}\frac{1}{p^{a_j k}} (E_i^*A_jE_i^*)^{a_j}=\left(p^k\prod_{j=1}^{L} \frac{1}{p^{a_j k}}(E_i^*A_jE_i^*)^{a_j}\right)\frac{1}{p^{a_{L+1}k}}(E_i^*A_{L+1}E_i^*)^{a_{L+1}}.\]
    Letting $C_\beta=g_1^{a_1}g_2^{a_2}\cdots g_L^{a_L}Z(G)$ we have by the inductive hypothesis that
    \[\left(p^k\prod_{j=1}^{L} \frac{1}{p^{a_j}}(E_i^*A_jE_i^*)^{a_j}\right)\frac{1}{p^{a_{L+1}k}}(E_i^*A_{L+1}E_i^*)^{a_{L+1}}=(E_i^*A_\beta E_i^*)\frac{1}{p^{a_{L+1}k}}(E_i^*A_{L+1}E_i^*)^{a_{L+1}}.\]
    Letting $C_\upsilon=g_{L+1}^{a_{L+1}}Z(G)$ by Lemma \ref{lem:nil3stuctrure1'}, $(E_i^*A_{L+1}E_i^*)^{a_{L+1}}=p^{(a_{L+1}-1)k}E_i^*A_\upsilon E_i^*$. We then have
    \[p^k\prod_{j=1}^n \frac{1}{p^{a_j k}}(E_i^*A_{j}E_i^*)^{a_j}=(E_i^*A_\beta E_i^*)\frac{1}{p^k}(E_i^*A_\upsilon E_i^*)=\frac{1}{p^k}(E_i^*A_\beta E_i^*)(E_i^*A_\upsilon E_i^*).\]
    We observe that $[(E_i^*A_{\beta}E_i^*)(E_i^*A_{\upsilon}E_i^*)]_{xy}=\sum_{u\in G}(E_i^*A_{\beta}E_i^*)_{xu}(E_i^*A_{\upsilon}E_i^*)_{uy}$.
    
     From Lemma \ref{lem:cam3outer}, we have $(E_i^*A_{\beta}E_i^*)_{xu}(E_i^*A_{\upsilon}E_i^*)_{uy}=1$ if and only if $u\in xC_\beta$ and $y\in u C_\upsilon=xg_1^{a_1}g_2^{a_2}\cdots g_{L+1}^{a_{L+1}}Z(G)=xC_\alpha$.
    The first of these conditions states there are $p^k$ different choices for $u$, namely those in $xC_\beta$ that can make the first term in each product nonzero. For each of these choices of $u$ if $y\not\in xC_\alpha$ then, $(E_i^*A_{\beta}E_i^*)_{xu}(E_i^*A_{\upsilon}E_i^*)_{uy}=0$ and if $y\in xC_\alpha$, then $(E_i^*A_{\beta}E_i^*)_{xu}(E_i^*A_{\upsilon}E_i^*)_{uy}=1$. Thus, if $y\in xC_\alpha$, we have a nonzero term in our sum for each of the $u\in xC_\beta$. This gives a total of $p^k$ nonzero terms in the sum, each of which is $1$. Thus, if $y\in xC_\alpha$, the $(x,y)$ entry of $(E_i^*A_{\beta}E_i^*)(E_i^*A_{\upsilon}E_i^*)$ is $p^k$. If $y\not\in xC_\alpha$, then $(E_i^*A_{\beta}E_i^*)_{xu}(E_i^*A_{\upsilon}E_i^*)_{uy}=0$ for all $u$ as either $u\not\in xC_\beta$, or $y\not\in uC_\upsilon$. Thus, the $(x,y)$ entry of  $(E_i^*A_{\beta}E_i^*)(E_i^*A_{\upsilon}E_i^*)$ is $0$ in this case.
    
    Hence, the $(x,y)$ entry of $(E_i^*A_{\beta}E_i^*)(E_i^*A_{\upsilon}E_i^*)$ is $p^k$ if and only if $y\in xC_\alpha$. From the proof of Lemma \ref{lem:cam3outer} this is the identical condition for the $(x,y)$ entry of $p^k E_i^*A_{C_\alpha}E_i^*$ to be $p^k$. Hence, we have $[(E_i^*A_{\beta}E_i^*)(E_i^*A_{\upsilon}E_i^*)]_{xy}=p^k(E_i^*A_{\alpha}E_i^*)_{xy}$ for all $x,y\in C_{i}$. We note that $\prod_{j=1}^n \frac{1}{p^{a_j k}}(E_i^*A_{j}E_i^*)^{a_j}$ is $0$ outside of the $C_i,C_i$, block as multiplying by $E_i^*$ on the right and left makes everything outside of this block $0$. Hence, 
    \[p^k\prod_{j=1}^n \frac{1}{p^{a_j k}}(E_i^*A_{j}E_i^*)^{a_j}=\frac{1}{p^k}(E_i^*A_{\beta}E_i^*)(E_i^*A_{\upsilon}E_i^*)= E_i^*A_\alpha E_i^*.\]
    This proves the inductive step and concludes the proof.
\end{proof}

\begin{Prop}\label{prop:kroneckerdec3'}
    Let $G$ have nilpotency class $3$. Suppose $|Z(G)|=p^k$, $k>0$, and $|G'\colon Z(G)|=p^n$. Suppose that $Z(G)=\langle z_1, z_2,\cdots, z_k\rangle$ and $G'/Z(G)=\langle g_1Z(G), g_2Z(G),\cdots, g_nZ(G)\rangle$. Let $C_j\subseteq G\setminus G'$.
    Let $C_\ell=g_1^{m_1}g_2^{m_2}\cdots g_n^{m_n}Z(G)$ be any class of $G$. Then we have that the $C_j,C_j$ block of $A_\ell$ can be written as $\left(\bigotimes_{t=1}^{n} P^{m_t}\right)\otimes J_{p^k}$ where $P$ is the $p\times p$ matrix
    \begin{equation}\label{eq:tensor3'}
    P=\begin{pmatrix}
        0 & 1 & 0 & \cdots & 0 \\
        0 & 0 & 1 & \cdots & 0 \\
        \vdots & \vdots & & \ddots & \vdots \\
        0 & 0 & 0 & \cdots & 1 \\
        1 & 0 & 0 & \cdots & 0
    \end{pmatrix}\end{equation}
    for a specific ordering of the elements $G'$.
\end{Prop}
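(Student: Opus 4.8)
The plan is to compute the entries of the $C_j,C_j$ block of $A_\ell$ by hand and then recognise the resulting pattern as the claimed Kronecker product. Since $C_j\subseteq G\setminus G'$, Proposition \ref{lem:pcamclass} gives $C_j=x_0G'$ for some $x_0\in G\setminus G'$, and $|C_j|=|G'|=p^{n+k}$. By Proposition \ref{thm:Camina3}, $G'/Z(G)$ is elementary abelian of order $p^n$, so $\overline{g_1},\dots,\overline{g_n}$ is a basis of $G'/Z(G)\cong(\Z_p)^n$ and every element of $G'$ is uniquely $g_1^{b_1}\cdots g_n^{b_n}z_1^{c_1}\cdots z_k^{c_k}$ with each $b_t,c_s\in\{0,\dots,p-1\}$. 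I would order the $p^{n+k}$ elements of $C_j$ as $x_0$ times an ordering of $G'$ of the kind used in the proof of Proposition \ref{prop:kroneckerdec3}: lexicographically, with the $g$-exponents $(b_1,\dots,b_n)$ forming an outer block of size $p^n$ (with $g_1$ the most significant coordinate) and the $z$-exponents $(c_1,\dots,c_k)$ an inner block of size $p^k$.

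The heart of the argument is a short coset computation. Take $x=x_0h$ and $y=x_0h'$ in $C_j$ with $h=g_1^{b_1}\cdots g_n^{b_n}z_1^{c_1}\cdots z_k^{c_k}$ and $h'=g_1^{b_1'}\cdots g_n^{b_n'}z_1^{c_1'}\cdots z_k^{c_k'}$ in $G'$. Then $(A_\ell)_{xy}=1$ iff $yx^{-1}\in C_\ell$, and since $yx^{-1}=x_0(h'h^{-1})x_0^{-1}$ and $C_\ell$ is a conjugacy class, this is equivalent to $h'h^{-1}\in C_\ell$. As $C_\ell=g_1^{m_1}\cdots g_n^{m_n}Z(G)$ is a single coset of $Z(G)$ in $G'$ and $h'h^{-1}\in G'$, this amounts to the equation $\overline{h'h^{-1}}=\overline{g_1}^{\,m_1}\cdots\overline{g_n}^{\,m_n}$ in $G'/Z(G)$; using that $G'/Z(G)$ is abelian with basis $\overline{g_1},\dots,\overline{g_n}$, this becomes $\overline{g_1}^{\,b_1'-b_1}\cdots\overline{g_n}^{\,b_n'-b_n}=\overline{g_1}^{\,m_1}\cdots\overline{g_n}^{\,m_n}$, i.e. $b_t'\equiv b_t+m_t\pmod p$ for all $t$. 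Crucially, this condition involves none of the $z$-exponents $c_s,c_s'$.

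It remains to read off the block structure. Fixing the outer indices $(b_1,\dots,b_n)$ and $(b_1',\dots,b_n')$, the corresponding $p^k\times p^k$ sub-block of the $C_j,C_j$ block of $A_\ell$ is the all-ones matrix $J_{p^k}$ when $(b_1',\dots,b_n')\equiv(b_1+m_1,\dots,b_n+m_n)\pmod p$ and is $0$ otherwise (the $z$-exponents being free). Hence the $C_j,C_j$ block of $A_\ell$ equals $M\otimes J_{p^k}$, where $M$ is the $p^n\times p^n$ permutation matrix of the translation $(b_1,\dots,b_n)\mapsto(b_1+m_1,\dots,b_n+m_n)$ on $(\Z_p)^n$. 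This translation is the coordinatewise product of the cyclic shifts $b_t\mapsto b_t+m_t$, so with respect to the chosen lexicographic ordering of the $g$-exponents it factors as $M=\bigotimes_{t=1}^n P^{m_t}$, with $P$ the shift matrix of (\ref{eq:tensor3'}); combining, the $C_j,C_j$ block of $A_\ell$ is $\bigl(\bigotimes_{t=1}^n P^{m_t}\bigr)\otimes J_{p^k}$, as claimed.

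The only genuine difficulty is the indexing bookkeeping: one must commit to one ordering of $G'$ — in particular a choice of which $g$-exponent is the most significant coordinate and of the orientation of each cyclic block — so that $M$ decomposes as the Kronecker product $\bigotimes_{t=1}^n P^{m_t}$ in precisely the displayed order; this is exactly why the statement is phrased ``for a specific ordering of the elements $G'$.'' Alternatively, one could obtain the same conclusion by mimicking the inductive arguments of Propositions \ref{prop:kroneckerdec2} and \ref{prop:kroneckerdec3}, peeling off one generator at a time with the help of Proposition \ref{prop:cammod} and Lemmas \ref{lem:cam3outer} and \ref{lem:nil3structure2'}; but the direct computation above seems the shorter route.
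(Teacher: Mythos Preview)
Your argument is correct. The key step---observing that $yx^{-1}=x_0(h'h^{-1})x_0^{-1}$ is conjugate to $h'h^{-1}$, so that membership in the conjugacy class $C_\ell$ reduces to a condition in $G'/Z(G)$---is clean and valid, and the reduction to $b_t'\equiv b_t+m_t\pmod p$ (independent of the $z$-exponents) immediately yields the $M\otimes J_{p^k}$ structure with $M=\bigotimes_t P^{m_t}$ under your lexicographic ordering.

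This is a genuinely different route from the paper. The paper first shows that each $\upsilon_sZ(G),\upsilon_tZ(G)$ sub-block is all $0$'s or all $1$'s, then passes to the quotient $G/Z(G)$, which by Proposition~\ref{prop:cammod} is a Camina group of nilpotency class~$2$, and invokes the already-established Proposition~\ref{prop:kroneckerdec2} together with Lemma~\ref{lem:nil2structure2} on that quotient to identify the $p^n\times p^n$ pattern matrix as $\bigotimes_r P^{m_r}$. Your direct computation bypasses the quotient entirely by exploiting conjugacy-invariance of $C_\ell$; it is shorter and self-contained, at the cost of not reusing the class~$2$ machinery. The paper's approach, on the other hand, makes the parallel with the class~$2$ case structurally explicit and avoids re-deriving the Kronecker factorisation of the translation matrix from scratch. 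Both are fine; your remark that the ordering convention (which $g$-coordinate is most significant) must be fixed to get $\bigotimes_{t=1}^n P^{m_t}$ in the stated order is exactly the point, and is the reason the statement is qualified by ``for a specific ordering.''
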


\begin{proof}
    Let $A_\ell'$ denote the $C_j,C_j$ block of $A_\ell$, where $C_j=gG'$. We shall order the elements of $Z(G)$ as follows. Let $\Lambda$ be the sequence $1,z_1,z_1^2,\cdots, z_1^{p-1}$. We define $\Lambda(m_2,m_3,\cdots, m_k)=z_k^{m_k}z_{k-1}^{m_{k-1}}\cdots z_2^{m_2}\cdot\Lambda$. Now we order the elements of $Z(G)$ as $\Lambda, \Lambda(1,0,\cdots, 0),\Lambda(2,0,\cdots, 0), \cdots, \Lambda(p-1,0,\cdots, 0), \\ \Lambda(0,1,\cdots, 0), \Lambda(1,1,\cdots, 0),\cdots, \Lambda(p-1,1,\cdots, 0),\Lambda(0,2,\cdots 0), \Lambda(1,2,\cdots 0),\cdots, \Lambda(p-1,p-1,0\cdots, 0), \\ \Lambda(0,0,1,\cdots 0),\Lambda(1,0,1,\cdots, 0),\cdots,  \Lambda(p-1,p-1,\cdots, p-1)$. Let $\Sigma$ denote this sequence of elements of $Z(G)$.

   Now let $\Upsilon$ be the sequence $1,g_1,g_1^2,\cdots, g_1^{p-1}$. We define $\Upsilon(m_2,m_3,\cdots, m_n)=g_n^{m_n}g_{n-1}^{m_{n-1}}\cdots g_2^{m_2}\cdot\Upsilon$. Now we order the representatives of $G'/Z(G)$ as $\Upsilon, \Upsilon(1,0,\cdots, 0),\Upsilon(2,0,\cdots, 0), \cdots, \Upsilon(p-1,0,\cdots, 0), \\ \Upsilon(0,1,\cdots, 0), \Upsilon(1,1,\cdots, 0),\cdots, \Upsilon(p-1,1,\cdots,0), \Upsilon(0,2,\cdots 0),\Upsilon(1,2,\cdots 0),\cdots,\Upsilon(p-1,p-1,0\cdots, 0), \\ \Upsilon(0,0,1,\cdots 0),\Upsilon(1,0,1,\cdots, 0),\cdots, \Upsilon(p-1,p-1,\cdots, p-1)$. Let $\upsilon_i$ be the $i$th element of this sequence of elements of $G'$. We now order $G'$ as $\upsilon_1\Sigma, \upsilon_2 \Sigma,\cdots, \upsilon_{p^n}\Sigma$.

    From the proof of Lemma \ref{lem:cam3outer}, the $(x,y)$ entry of $A_\ell'$ is $1$ if and only if $y\in x C_\ell$. Using the ordering we have described above, suppose the $\upsilon_s Z(G), \upsilon_t Z(G)$ block of $A_\ell'$ has a nonzero entry. Say $(A_\ell')_{xy}=1$. Then $y\in x C_\ell$ with $x\in \upsilon_s Z(G)$ and $y\in \upsilon_t Z(G)$. Suppose $y=\upsilon_t w$ for some $w\in Z(G)$. Given any $a\in \upsilon_t Z(G)$ we have $a=\upsilon_t b$ for some $b\in Z(G)$. Then $a=\upsilon_t ww^{-1}b=y(w^{-1}b)$. As $w^{-1}b\in Z(G)$ and $y\in x C_\ell$, we have $a\in x C_\ell$ as $C_\ell$ is a coset of $Z(G)$ in $G'$. Hence, for all $a\in \upsilon_t Z(G)$ we have $a\in x C_\ell$. Therefore, $\upsilon_t Z(G) \subseteq x C_\ell= \upsilon_s C_\ell$. Now as $\upsilon_t Z(G) \subseteq \upsilon_t C_\ell$, for all $a\in \upsilon_s Z(G)$ and $b\in \upsilon_t Z(G)$, $b\in a C_\ell=\upsilon_s C_\ell$. Hence, $(A_\ell ')_{ab}=1$. Thus, every entry of the $\upsilon_s Z(G), \upsilon_t Z(G)$ block of $A_\ell'$ is $1$. So if any entry of the $\upsilon_s Z(G), \upsilon_t Z(G)$ block of $A_\ell'$ is nonzero, they are all nonzero. Hence, the $\upsilon_s Z(G), \upsilon_t Z(G)$ block of $A_\ell'$ is either all $1$'s or all $0$'s.

    Using Theorem \ref{thm:Camina3} the lower central series of $G$ is $G\trianglerighteq G'\trianglerighteq Z(G)\trianglerighteq 1$. Then the lower central series of $G/Z(G)$, which is a Camina group by Proposition \ref{prop:cammod}, is $G/Z(G)\trianglerighteq G'/Z(G)\trianglerighteq 1$. Hence, $G/Z(G)$ is a Camina group of nilpotency class $2$ with center $G'/Z(G)$. We note that $\overline{\upsilon_1},\overline{\upsilon_2}\cdots, \overline{\upsilon_{p^n}}$ is an ordering of the elements of $G'/Z(G)$. It is in fact the same ordering of the elements of the center of $G/Z(G)$ that is used in the proof of Proposition \ref{prop:kroneckerdec2}. For each conjugacy class of $G/Z(G)$, we let $\overline{C_i}$ denote the conjugacy class $C_i/Z(G)$ in $G/Z(G)$. In particular, $\overline{C_\ell}=\overline{g_1^{m_1}}\cdot\overline{g_2^{m_2}}\cdots \overline{g_{n}^{m_n}}$. Let $\overline{A_\ell}$ be the adjacency matrix corresponding to $\overline{C_\ell}$. We shall let $\overline{A_\ell'}$ denote the $\overline{C_j},\overline{C_j}$ block of $\overline{A_\ell}$. 
    
    From what we proved above, the $\upsilon_sZ(G),\upsilon_t Z(G)$ block of $A_\ell'$ is all $1$'s if and only if $\upsilon_tZ(G)\subseteq \upsilon_tC_\ell$. This is true if and only if $\overline{\upsilon_t}\subseteq \overline{\upsilon_t}\cdot\overline{C_\ell}$. By the proof of Lemma \ref{lem:cam3centeral} this is exactly the condition for the $(\overline{\upsilon_s},\overline{\upsilon_t})$ entry of $\overline{A_\ell'}$ to be nonzero. Hence, the $\upsilon_sZ(G),\upsilon_t Z(G)$ block of $A_\ell'$ is all $1$'s if and only if $(\overline{A_\ell'})_{\overline{\upsilon_s},\overline{\upsilon_t}}=1$. We now let $\overline{A_{r}}$ denote the adjacency matrix for the conjugacy class $\overline{C_r}=\{\overline{g_r}\}$ in $G/Z(G)$. By Lemma \ref{lem:nil2structure2}, $\overline{A_\ell'}=\prod_{r=1}^n \overline{A_r'}^{m_r}$, where $\overline{A_r'}$ is the $\overline{C_j},\overline{C_j}$ block of $\overline{A_r}$. As the ordering on $G'/Z(G)$ is the same as in the proof of Proposition \ref{prop:kroneckerdec2}, $\overline{A_r'}=\bigotimes_{a=1}^n M_a$ where each $M_a$ is a $p\times p$ matrix with $M_a=I$ for $m\neq n-r+1$ and $M_{n-r+1}=P$ from (\ref{eq:tensor3'}). 
    We observe that
    \[\overline{A_\ell'}=\prod_{r=1}^n \overline{A_r'}^{m_r}=\prod_{r=1}^n \bigotimes_{a=1}^n M_a^{m_r}=\bigotimes_{r=1}^n P^{m_r}\]
    since matrix multiplication distributes over Kronecker products.

    As the $\upsilon_sZ(G),\upsilon_t Z(G)$ block of $A_\ell'$ is all $1$'s if and only if $(\overline{A_\ell'})_{\overline{\upsilon_s},\overline{\upsilon_t}}=1$, we have $A_\ell'=\overline{A_\ell'}\otimes J$ since each place $\overline{A_\ell'}$ has a $1$, $A_\ell'$ has a $J$ matrix appearing. Therefore, we have $A_\ell'=\left(\bigotimes_{t=1}^n P^{m_t}\right)\otimes J$.
\end{proof}

We will need one more result before constructing the idempotents.

\begin{Prop}\label{prop:kroneckerdec3c}
     Let $G$ have nilpotency class $3$. Suppose $|Z(G)|=p^k$, $k>0$, and $|G'\colon Z(G)|=p^n$. Suppose $Z(G)=\langle z_1, z_2,\cdots, z_k\rangle$ and $G'/Z(G)=\langle g_1Z(G), g_2Z(G),\cdots, g_nZ(G)\rangle$. Let $C_j\subseteq G\setminus G'$ be a class. Letting $C_0,C_1,\cdots, C_{p^k-1}$ be the central classes of $G$, the $C_j,C_j$ block of $\sum_{i=1}^{p^k}A_i$ can be written as $\left(\bigotimes_{t=1}^{n} I_{p}\right)\otimes J_{p^k}$.
\end{Prop}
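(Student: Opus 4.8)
The plan is to read off the $C_j,C_j$ block of each central adjacency matrix, sum these blocks, and then identify the result using the ordering of $G'$ fixed in the proof of Proposition \ref{prop:kroneckerdec3'}.

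First I would describe the $C_j,C_j$ block of $\sum_i A_i$ (the sum over all $p^k$ central classes) entrywise. For a central class $C_i=\{z\}$ with $z\in Z(G)$, by definition $(A_i)_{x'y'}=1$ iff $y'(x')^{-1}\in C_i$; since the central classes partition $Z(G)$ and $Z(G)$ is central, summing over all of them gives, for $x'\in C_j=xG'$, that the $C_j,C_j$ block of $\sum_i A_i$ has $(x',y')$ entry $1$ exactly when $y'\in Z(G)x'=x'Z(G)$ and $0$ otherwise; note $x'Z(G)\subseteq x'G'=C_j$, so these columns all stay inside the block. The same conclusion follows from part 2 of Lemma \ref{lem:cam3outer}: the $C_j,C_j$ block of each $A_i$ has $|C_i|=1$ nonzero entry per row, located at column $x'z$, and as $z$ runs over $Z(G)$ these columns run over $x'Z(G)$, each hit exactly once.

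Next I would invoke the ordering of $G'$ from the proof of Proposition \ref{prop:kroneckerdec3'}: $G'=\upsilon_1\Sigma,\upsilon_2\Sigma,\dots,\upsilon_{p^n}\Sigma$, with $\Sigma$ an enumeration of $Z(G)$ and $\upsilon_1,\dots,\upsilon_{p^n}$ representatives of the cosets of $Z(G)$ in $G'$, so that $C_j=xG'$ is correspondingly ordered as $x\upsilon_1\Sigma,\dots,x\upsilon_{p^n}\Sigma$ and its consecutive blocks of $p^k$ elements are precisely the cosets $x\upsilon_sZ(G)$. Writing $x'=x\upsilon_sz_a$ and $y'=x\upsilon_tz_b$ with $z_a,z_b\in Z(G)$, one checks that $y'\in x'Z(G)$ iff $\upsilon_t\in\upsilon_sZ(G)$ iff $s=t$; hence the entry is $1$ exactly when $x'$ and $y'$ lie in the same length-$p^k$ block. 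This makes the $C_j,C_j$ block of $\sum_i A_i$ block diagonal with $p^n$ diagonal blocks, each equal to $J_{p^k}$, and zero off the diagonal, i.e.\ $I_{p^n}\otimes J_{p^k}=\left(\bigotimes_{t=1}^n I_p\right)\otimes J_{p^k}$, as claimed.

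The computation is short, so there is no genuine obstacle; the points that need care are verifying that the chosen ordering of $G'$ really does break it into the $Z(G)$-cosets as consecutive length-$p^k$ blocks (so that ``$y'\in x'Z(G)$'' becomes ``same block''), and keeping the indexing of the $p^k$ central classes consistent with the sum in the statement (with $C_0,\dots,C_{p^k-1}$ the central classes this is the sum over all of them). One could alternatively view this result as the natural ``degenerate'' companion of Proposition \ref{prop:kroneckerdec3'}: there the factor $J_{p^k}$ already encodes that each $Z(G)$-coset is fully connected, and summing the central adjacency matrices produces exactly the $I_{p^n}\otimes J_{p^k}$ piece one would expect; but the direct argument above is the cleanest way to pin it down.
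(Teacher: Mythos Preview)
Your argument is correct and follows essentially the same route as the paper: both compute that the $(x',y')$ entry of the summed block is $1$ iff $y'\in x'Z(G)$, then use the ordering from Proposition~\ref{prop:kroneckerdec3'} to see that this condition is equivalent to $x'$ and $y'$ lying in the same $Z(G)$-coset block, yielding $I_{p^n}\otimes J_{p^k}$. The paper phrases the middle step slightly differently (showing each $\upsilon_sZ(G),\upsilon_tZ(G)$ block is all $0$'s or all $1$'s and then identifying when it is all $1$'s), but the content is identical.
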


\begin{proof}
    Let $A_i'$ denote the $C_j,C_j$ block of $A_i$ for $0\leq i\leq p^k-1$ and let $C_j=gG'$. We order the elements of $Z(G)$ and $G'$ as in the proof of Proposition \ref{prop:kroneckerdec3'}, using the same notation as in that proof.  

    From the proof of Lemma \ref{lem:cam3inner} the $(x,y)$ entry of $A_i'$ is $1$ if and only if $y\in xC_i=\{xz_i\}$. Then as $C_0,C_1,\cdots, C_{p^k-1}$ partition $Z(G)$, the $(x,y)$ entry of $\sum_{i=0}^{p^k-1} A_i'$ is $1$ if and only if $y\in xZ(G)$. To see this we note the $(x,y)$ entry of $\sum_{i=0}^{p^k-1}A_i'$ is $1$ if and only if $y\in xC_i$ for some $C_i$, if and only if $y\in xZ(G)$ as $Z(G)=\bigcup_{i=0}^{p^k-1} C_i$.

Using our ordering, suppose the $\upsilon_s Z(G), \upsilon_t Z(G)$ block of $\sum_{i=0}^{p^k-1} A_i'$ has a nonzero entry. Say $(\sum_{i=0}^{p^k-1} A_i')_{xy}=1$, where $x\in \upsilon_s Z(G)$ and $y\in \upsilon_t Z(G)$. Note that $y\in xZ(G)$ by the above argument. Suppose $y=\upsilon_t w$ for some $w\in Z(G)$. Given any $a\in \upsilon_t Z(G)$ we have that $a=\upsilon_t b$ for some $b\in Z(G)$. Then $a=\upsilon_t ww^{-1}b=y(w^{-1}b)$. As $w^{-1}b\in Z(G)$ and $y\in x Z(G)$, we have $a\in x Z(G)$. Hence, for all $a\in \upsilon_t Z(G)$ we have $a\in x Z(G)$. Therefore, $\upsilon_t Z(G) \subseteq x Z(G)= \upsilon_s Z(G)$. Now as $\upsilon_t Z(G) \subseteq \upsilon_s Z(G)$, we have for all $a\in \upsilon_s Z(G)$ and $b\in \upsilon_t Z(G)$, $b\in a Z(G)=\upsilon_s Z(G)$. Hence, $(\sum_{i=0}^{p^k-1} A_i')_{ab}=1$. Thus, every entry of the $\upsilon_s Z(G), \upsilon_t Z(G)$ block of $A_\ell'$ is $1$. So if any entry of the $\upsilon_s Z(G), \upsilon_t Z(G)$ block of $A_\ell'$ is nonzero, they are all nonzero. Hence, the $\upsilon_s Z(G), \upsilon_t Z(G)$ block of $A_\ell'$ is either all $1$'s or all $0$'s. We note $\upsilon_t Z(G)\subseteq \upsilon_s Z(G)$ if and only if $\upsilon_s=\upsilon_t$. Therefore, the $\upsilon_s Z(G),\upsilon_s Z(G)$ block of $\sum_{i=0}^{p^k-1} A_i'$ is all $1$'s for all $\upsilon_s$ and all other blocks are $0$. This however means $\sum_{i=0}^{p^k-1} A_i'=I_{p^{n}}\otimes J_{p^k}$. Therefore, $\sum_{i=0}^{p^k-1} A_i'=\left(\bigotimes_{t=1}^n I_p\right)\otimes J_{p^k}$.
\end{proof}

We now have the tools needed to construct the third type of idempotent for the nilpotency class $3$ type.

\begin{Thm}\label{thm:3'idm}
    Let $G$ have nilpotency class $3$. Suppose $|Z(G)|=p^k$, $k>0$, $|G'\colon Z(G)|=p^n$, $Z(G)=\langle z_1, z_2,\cdots, z_k\rangle$, and $G'/Z(G)=\langle g_1Z(G), g_2Z(G),\cdots, g_nZ(G)\rangle$. Let $C_i\subseteq G\setminus G'$ be a class. Let $C_0,C_1,\cdots, C_{p^k-1}$ be the central classes of $G$ and $C_{p^k-1+j}=g_jZ(G)$ for $1\leq j\leq n$. Let $\zeta$ be any primitive $p$th root of unity. Then define
    \[Y_i(\alpha)=\frac{1}{p^{n+k}}\left(\sum_{j=0}^{p^k-1} E_i^*A_jE_i^*+\sum_{a_1,a_2,\cdots, a_n} p^k\prod_{j=1}^n \frac{\zeta^{\alpha_j a_j}}{p^{a_jk}}(E_i^*A_{p^k-1+j}E_i^*)^{a_j}\right)\]
    where $a_1,a_2,\cdots, a_n\in \{0,1,\cdots, p-1\}$ and not all of the $a_j$ are zero. Then $Y_i(\alpha)$
    is an idempotent of $T(G)$ for any $\alpha=(\alpha_1,\alpha_2,\cdots, \alpha_n)$ with each $\alpha_j\in \{0,1,\cdots, p-1\}$.
\end{Thm}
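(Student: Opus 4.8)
The plan is to follow the pattern of the proof of Theorem~\ref{thm:2idm}. Since $Y_i(\alpha)$ is zero outside the $C_i,C_i$ block, it is an idempotent if and only if its $C_i,C_i$ block $Y_i'(\alpha)$ is. First I would rewrite the outer terms using Lemma~\ref{lem:nil3structure2'}: moving the scalars past the $E_i^*$'s, for each tuple $a=(a_1,\dots,a_n)$ one has $p^k\prod_{j=1}^n \frac{\zeta^{\alpha_j a_j}}{p^{a_jk}}(E_i^*A_{p^k-1+j}E_i^*)^{a_j}=\left(\prod_{j=1}^n\zeta^{\alpha_j a_j}\right)E_i^*A_{\ell(a)}E_i^*$, where $C_{\ell(a)}=g_1^{a_1}\cdots g_n^{a_n}Z(G)$. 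By Proposition~\ref{prop:kroneckerdec3c} the $C_i,C_i$ block of $\sum_{j=0}^{p^k-1}A_j$ is $\left(\bigotimes_{t=1}^n I_p\right)\otimes J_{p^k}$, which is exactly the value the formula of Proposition~\ref{prop:kroneckerdec3'} assigns to the ``$a=(0,\dots,0)$'' term $\left(\bigotimes_{t=1}^n P^0\right)\otimes J_{p^k}$. Hence the central sum absorbs into the outer sum as its missing $a=0$ term, and using Proposition~\ref{prop:kroneckerdec3'} for the blocks of the $A_{\ell(a)}$ I get
\[Y_i'(\alpha)=\frac{1}{p^{n+k}}\sum_{a\in\{0,\dots,p-1\}^n}\left(\prod_{j=1}^n\zeta^{\alpha_j a_j}\right)\left(\left(\bigotimes_{t=1}^n P^{a_t}\right)\otimes J_{p^k}\right).\]

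Next I would factor this sum of Kronecker products as a Kronecker product of sums. Pulling $J_{p^k}$ out and using that the $a_j$ range independently,
\[Y_i'(\alpha)=\frac{1}{p^{n+k}}\left(\bigotimes_{j=1}^n F_j\right)\otimes J_{p^k},\qquad F_j:=\sum_{a=0}^{p-1}\zeta^{\alpha_j a}P^a.\]
As in Theorem~\ref{thm:2idm}, $P$ is diagonalizable and similar to $D=\diag(1,\zeta,\dots,\zeta^{p-1})$, so $F_j$ is similar to $\sum_{a=0}^{p-1}\zeta^{\alpha_j a}D^a$, a diagonal matrix whose $t$-th entry is $\sum_{a=0}^{p-1}\zeta^{a(\alpha_j+t)}$; this equals $p$ when $t\equiv-\alpha_j\pmod p$ and $0$ otherwise. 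Thus $\tfrac1p F_j$ is similar to a rank-one diagonal idempotent, so $F_j^2=pF_j$; also $J_{p^k}^2=p^kJ_{p^k}$. Since squaring distributes over Kronecker products,
\[\left(Y_i'(\alpha)\right)^2=\frac{1}{p^{2(n+k)}}\left(\bigotimes_{j=1}^n F_j^2\right)\otimes J_{p^k}^2=\frac{p^n\cdot p^k}{p^{2(n+k)}}\left(\bigotimes_{j=1}^n F_j\right)\otimes J_{p^k}=Y_i'(\alpha),\]
so $Y_i'(\alpha)$, and therefore $Y_i(\alpha)$, is an idempotent. Equivalently, one may conjugate simultaneously by $\left(\bigotimes_{j=1}^n Q\right)\otimes I_{p^k}$, where $D=QPQ^{-1}$, reducing $Y_i'(\alpha)$ to a ``diagonal $\otimes\, J_{p^k}$'' matrix whose square is checked directly, exactly as in the proof of Theorem~\ref{thm:2idm}.

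I expect the main obstacle to be the bookkeeping that merges the central block and the outer blocks into the single formula for $Y_i'(\alpha)$: one must use the \emph{same} orderings of the elements of $Z(G)$ and of $G'$ in Propositions~\ref{prop:kroneckerdec3'} and~\ref{prop:kroneckerdec3c} (both follow the ordering set up in the proof of Proposition~\ref{prop:kroneckerdec3'}), and one must be careful that the ``$a=0$'' slot of the tensor formula is the central block $I_{p^n}\otimes J_{p^k}$ and not the adjacency matrix of a single conjugacy class, since $Z(G)$ is a union of $p^k$ central classes rather than one class. Once the identity $Y_i'(\alpha)=\frac{1}{p^{n+k}}\left(\bigotimes_j F_j\right)\otimes J_{p^k}$ is established, the remaining computation is identical to that in Theorem~\ref{thm:2idm}.
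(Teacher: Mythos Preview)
Your proof is correct and follows essentially the same path as the paper: both reduce to the $C_i,C_i$ block, absorb the central sum as the $a=0$ term via Propositions~\ref{prop:kroneckerdec3'} and~\ref{prop:kroneckerdec3c}, and arrive at $Y_i'(\alpha)=\frac{1}{p^{n+k}}\sum_a\bigl(\prod_j\zeta^{\alpha_j a_j}\bigr)\bigl(\bigotimes_t P^{a_t}\bigr)\otimes J_{p^k}$. The only cosmetic difference is that you factor this as $\frac{1}{p^{n+k}}\bigl(\bigotimes_j F_j\bigr)\otimes J_{p^k}$ and square using $F_j^2=pF_j$, $J_{p^k}^2=p^kJ_{p^k}$, whereas the paper simultaneously diagonalizes $P$ and $J_p$ by conjugating with $\bigotimes_{m=1}^{n+k}Q$ to reach a diagonal matrix with a single $1$; your alternative remark at the end is precisely the paper's route.
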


\begin{proof}
     Fix $\alpha=(\alpha_1,\alpha_2,\cdots, \alpha_n)$ with each $\alpha_j\in \{0,1,\cdots, p-1\}$. We note $Y_i(\alpha)$ is only nonzero in the $C_i,C_i$ block. If we let $A_j'$ denote the $C_i,C_i$ block of $A_j$ the $C_i,C_i$ block of $Y_i(\alpha)$ is just 
    \[Y_i'(\alpha)=\frac{1}{p^{n+k}}\left(\sum_{j=0}^{p^k-1} (A_j')+\sum_{a_1,a_2,\cdots, a_n} p^k\prod_{j=1}^n \frac{\zeta^{\alpha_j a_j}}{p^{a_jk}}(A_{p^k-1+j}')^{a_j}\right).\]
    Hence, $Y_i(\alpha)$ is an idempotent if and only if $Y_i'(\alpha)$ is an idempotent.

    By Proposition \ref{prop:kroneckerdec3c}, $\sum_{j=0}^{p^k-1} (A_j')=\left(\bigotimes_{t=1}^n I_p\right)\otimes J_{p^k}$. Similarly be Proposition \ref{prop:kroneckerdec3'}, $A_{p^k-1+j}'=\left(\bigotimes_{t=1}^{n} P^{m_t}\right)\otimes J_{p^k}$ where $m_t=0$ for $t\neq j$, $m_j=1$, and $P$ is the $p\times p$ matrix in (\ref{eq:tensor3'}).

    Using this we have
    \[Y_i'(\alpha)=\frac{1}{p^{n+k}}\left(\left(\bigotimes_{t=1}^n I_p\right)\otimes J_{p^k}+\sum_{a_1,a_2,\cdots, a_n} p^k\prod_{j=1}^n \frac{\zeta^{\alpha_j a_j}}{p^{a_jk}} \left(\left(\bigotimes_{t=1}^n P^{m_t}\right)\otimes J_{p^k}\right)^{a_j}\right)\]
    \[=\frac{1}{p^{n+k}}\left(\left(\bigotimes_{t=1}^n I\right)\otimes J_{p^k}+\sum_{a_1,a_2,\cdots, a_n} p^k\prod_{j=1}^n \zeta^{\alpha_j a_j}\left(\bigotimes_{t=1}^n P^{m_t}\right)^{a_j}\otimes 
    \frac{1}{p^{a_jk}}J_{p^k}^{a_j}\right).\]
     Note $P^{m_t}=I_p$ for all $t$ expect $m=j$. Moving the coefficient $\zeta^{\alpha_j a_j}$ to be on the one non-identity term in the Kronecker product for each $j$, then in the product we have in each position of the Kronecker product $\zeta^{\alpha_j a_j}P_j$ multiplied by the identity repeatedly. This gives
    \[\frac{1}{p^{n+k}}\left(\left(\bigotimes_{t=1}^n I_p\right)\otimes J_{p^k}+p^k\sum_{a_1,a_2,\cdots, a_n}\zeta^{\alpha_j a_j}\left(\bigotimes_{j=1}^n P^{a_j}\right)\otimes \frac{1}{p^{k\sum_{i=1}^n a_i}}J_{p^k}^{\sum_{i=1}^n a_i}\right)\]
    \[=\frac{1}{p^{n+k}}\left(\left(\bigotimes_{t=1}^n I_p\right)\otimes J_{p^k}+p^k\sum_{a_1,a_2,\cdots, a_n}\zeta^{\alpha_j a_j}\left(\bigotimes_{j=1}^n P^{a_j}\right)\otimes \frac{1}{p^{k\sum_{i=1}^n a_i}}p^{k\sum_{i=1}^n a_i-1}J_{p^k}\right)\]
    \[=\frac{1}{p^{n+k}}\left(\left(\bigotimes_{t=1}^n I_p\right)\otimes J_{p^k}+\sum_{a_1,a_2,\cdots, a_n}\zeta^{\alpha_j a_j}\left(\bigotimes_{j=1}^n P^{a_j}\right)\otimes J_{p^k}\right).\]
    We recall the summation is over all $a_1,a_2\cdots, a_n\in \{0,1,\cdots, p-1\}$ such that not all of the $a_j$ are zero. Notice if $a_1=a_2=\cdots =a_n=0$, then we would have 
    \[\zeta^{\alpha_j a_j}\left(\bigotimes_{j=1}^n P^{a_j}\right)\otimes J_{p^k}=1\left(\bigotimes_{j=1}^n I_p\right)\otimes J_{p^k},\]
    Therefore, 
    \[\frac{1}{p^{n+k}}\left(\left(\bigotimes_{t=1}^n I_p\right)\otimes J_{p^k}+\sum_{a_1,a_2,\cdots, a_n}\zeta^{\alpha_j a_j}\left(\bigotimes_{j=1}^n P^{a_j}\right)\otimes J_{p^k}\right)=\frac{1}{p^{n+k}}\sum_{a_1,a_2,\cdots, a_n}\zeta^{\alpha_j a_j}\left(\bigotimes_{j=1}^n P^{a_j}\right)\otimes J_{p^k},\]
    where $a_1,a_2,\cdots,a_n\in \{0,1,\cdots p-1\}$. We next note that $J_{p^k}=\bigotimes_{j=1}^k J_p$. Notice that every row and column of $P$ has exactly one nonzero entry. Since every row and column of $P$ has the same number of nonzero entries, it commutes with $J_p$. As these matrices commute and are diagonalizable they are simultaneously diagonalizable. Let $Q$ be a matrix the simultaneously diagonalizes $P$ and $J_p$. Say that $QPQ^{-1}=D$ and $QJ_pQ^{-1}=K_p$. We note 
    \[P\sim\begin{pmatrix}
        1 & & & \\  & \zeta & & \\ & & & \ddots & \\ & & & & \zeta^{p-1} 
    \end{pmatrix}\text{ and } J_p\sim \begin{pmatrix}
        p & & & \\  & 0 & & \\ & & & \ddots & \\ & & & & 0 
    \end{pmatrix}.\]
    Then $D$ is a diagonal matrix with diagonal entries being a permutation of $1,\zeta,\zeta^2,\cdots, \zeta^{p-1}$ and $K_p$ is a diagonal matrix with one entry being $p$ and the rest being $0$. Picking $Q$ appropriately, we may assume that $K_p$ has the first diagonal entry as $p$. Now conjugating by $\bigotimes_{m=1}^{n+k} Q$ we have
    \[\frac{1}{p^{n+k}}\sum_{a_1,a_2,\cdots, a_n}\zeta^{\alpha_j a_j}\left(\bigotimes_{j=1}^n P^{a_j}\right)\otimes J_{p^k}\sim
    \bigotimes_{m=1}^{n+k} Q\left( \frac{1}{p^{n+k}}\sum_{a_1,a_2,\cdots, a_n}\zeta^{\alpha_j a_j}\left(\bigotimes_{j=1}^n P^{a_j}\right)\otimes J_{p^k}\right) \bigotimes_{m=1}^{n+k} Q^{-1}\]
    \[=\bigotimes_{m=1}^{n+k} Q \left(\frac{1}{p^{n+k}}\sum_{a_1,a_2,\cdots, a_n}\zeta^{\alpha_j a_j}\left(\bigotimes_{j=1}^n P^{a_j}\right)\otimes \bigotimes_{j=1}^k J_p\right) \bigotimes_{m=1}^{n+k} Q^{-1}\]
    \[=\frac{1}{p^{n+k}}\sum_{a_1,a_2,\cdots, a_n} \zeta^{\alpha_j a_j}\bigotimes_{j=1}^n QP^{a_j}Q^{-1}\otimes \bigotimes_{j=1}^k QJ_pQ^{-1}=\frac{1}{p^{n+k}}\sum_{a_1,a_2,\cdots, a_n} \zeta^{\alpha_j a_j}\bigotimes_{j=1}^n D^{a_j}\otimes \bigotimes_{j=1}^k K_p.\]
    We note that $K=\bigotimes_{j=1}^k K_p$ is a diagonal matrix with $p^k$ in the first position and $0$ elsewhere. Then we are considering
    \[\frac{1}{p^{n+k}}\sum_{a_1,a_2,\cdots, a_n} \zeta^{\alpha_j a_j}\bigotimes_{j=1}^n D^{a_j}\otimes \bigotimes_{j=1}^k K_p=\frac{1}{p^{n+k}}\sum_{a_1,a_2,\cdots, a_n} \zeta^{\alpha_j a_j}\bigotimes_{j=1}^n D^{a_j}\otimes K.\]
    We shall focus on 
    \[\frac{1}{p^{n+k}}\sum_{a_1,a_2,\cdots, a_n} \zeta^{\alpha_j a_j}\bigotimes_{j=1}^n D^{a_j}=\frac{1}{p^{n+k}}\sum_{a_1,a_2,\cdots a_n} \bigotimes_{j=1}^n \left(\zeta^{\alpha_j} D\right)^{a_j}.\] 
    By a similar argument to that in the proof of Theorem \ref{thm:2idm} this matrix
    is a diagonal matrix with $\frac{1}{p^k}$ in a single entry. Since $K$ is a matrix with $p^k$ in the first diagonal entry and $0$ in all other entries, 
    \[\frac{1}{p^{n+k}}\sum_{a_1,a_2,\cdots, a_n} \zeta^{\alpha_j a_j}\bigotimes_{j=1}^n D^{a_j}\otimes K\]
    has a single diagonal entry that is $1$ and all other entries are $0$. We then have
    \[\left(\frac{1}{p^{n+k}}\sum_{a_1,a_2,\cdots, a_n} \zeta^{\alpha_j a_j}\bigotimes_{j=1}^n D^{a_j}\otimes K\right)^2=\frac{1}{p^{n+k}}\sum_{a_1,a_2,\cdots, a_n} \zeta^{\alpha_j a_j}\bigotimes_{j=1}^n D^{a_j}\otimes K,\]
    so this matrix is an idempotent. Then as $Y_i'(\alpha)$ 
    is similar to this matrix, it must be an idempotent as well. Thus $Y_i(\alpha)$ is an idempotent since its only nonzero block is the $C_i,C_i$ block, which we just found to be an idempotent.    
    \end{proof}

    \begin{Prop}\label{prop:orth3'}
    Let $G$ have nilpotency class $3$. Suppose $|Z(G)|=p^k$, $k>0$, $|G'\colon Z(G)|=p^n$, $Z(G)=\langle z_1, z_2,\cdots, z_k\rangle$, and $G'/Z(G)=\langle g_1Z(G), g_2Z(G),\cdots, g_nZ(G)\rangle$. Let $C_i\subseteq G\setminus G'$ be a class. Let $C_0,C_1,\cdots, C_{p^k-1}$ be the central classes of $G$ and $C_{p^k-1+j}=g_jZ(G)$ for $1\leq j\leq n$. Let $\zeta$ be any primitive $p$th root of unity. Then using the notation from Theorem \ref{thm:3'idm} we have that if $\alpha\neq \beta$, then $Y_i(\alpha)$ is orthogonal to $Y_i(\beta)$.
\end{Prop}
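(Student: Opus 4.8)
The plan is to mimic the orthogonality argument of Proposition \ref{prop:orth2} (and \ref{prop:orth3v}), reducing everything to a trace computation after a simultaneous diagonalization. First I would note that both $Y_i(\alpha)$ and $Y_i(\beta)$ vanish outside the $C_i,C_i$ block, so $\langle Y_i(\alpha),Y_i(\beta)\rangle$ equals the inner product of their $C_i,C_i$ blocks $Y_i'(\alpha)$ and $Y_i'(\beta)$. Next, by the chain of simplifications carried out in the proof of Theorem \ref{thm:3'idm} (invoking Propositions \ref{prop:kroneckerdec3'} and \ref{prop:kroneckerdec3c}), I would record that
\[Y_i'(\alpha)=\frac{1}{p^{n+k}}\sum_{a_1,\ldots,a_n}\zeta^{\alpha_j a_j}\left(\bigotimes_{j=1}^n P^{a_j}\right)\otimes J_{p^k},\]
and likewise for $\beta$, where $P$ is the matrix in (\ref{eq:tensor3'}) and the sum is over all $a_1,\ldots,a_n\in\{0,1,\ldots,p-1\}$.

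The key step is the simultaneous diagonalization already set up in Theorem \ref{thm:3'idm}: there is $Q$ with $QPQ^{-1}=D$ diagonal with entries a permutation of $1,\zeta,\ldots,\zeta^{p-1}$, and $QJ_pQ^{-1}=K_p=\diag(p,0,\ldots,0)$. Conjugating $Y_i'(\alpha)$ and $Y_i'(\beta)$ by $\bigotimes_{m=1}^{n+k}Q$ is a similarity and hence preserves the trace inner product; after conjugation, the argument in Theorem \ref{thm:3'idm} shows each of these becomes a diagonal $0$--$1$ matrix with a single $1$, located at the position determined by the $(-\alpha_1,\ldots,-\alpha_n)$-entry of $\bigotimes_j D^{a_j}$ tensored with the first coordinate of $K$ (the only position where $K=\bigotimes K_p$ is nonzero). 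So after conjugation, $Y_i'(\alpha)$ is the diagonal matrix with its unique $1$ in the slot indexed by $(-\alpha_1,\ldots,-\alpha_n)$ (mod $p$) in the $D$-factors, and similarly for $\beta$. Since these are real diagonal matrices, $\langle Y_i'(\alpha),Y_i'(\beta)\rangle=\trace(Y_i'(\alpha)\overline{Y_i'(\beta)}^T)=\trace(Y_i'(\alpha)Y_i'(\beta))$, and the product of two diagonal $0$--$1$ matrices with single entries in different positions is $0$. Because $\alpha\neq\beta$ forces $(-\alpha_1,\ldots,-\alpha_n)\not\equiv(-\beta_1,\ldots,-\beta_n)\pmod p$, the two nonzero slots differ, so the product is $0$ and hence $\langle Y_i(\alpha),Y_i(\beta)\rangle=0$.

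The one genuinely new bookkeeping point, and the place I'd be most careful, is confirming that the constant term ($a_1=\cdots=a_n=0$) really does fold correctly into the sum after the cancellation performed in Theorem \ref{thm:3'idm}, so that $Y_i'(\alpha)$ truly has the clean form $\frac{1}{p^{n+k}}\sum\zeta^{\alpha_j a_j}(\bigotimes P^{a_j})\otimes J_{p^k}$ with the sum now ranging over \emph{all} tuples including the zero tuple; this is exactly what makes the single-nonzero-entry conclusion go through, and it is already established in the proof of Theorem \ref{thm:3'idm}, so I would simply cite that computation rather than redo it. With that in hand, the orthogonality is immediate, and the proof is essentially a two-line appeal to Theorem \ref{thm:3'idm} plus the observation that distinct $\alpha$ give distinct diagonal support. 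Hence $Y_i(\alpha)$ and $Y_i(\beta)$ are orthogonal whenever $\alpha\neq\beta$.
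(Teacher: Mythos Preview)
Your proposal is correct and follows essentially the same route as the paper: the paper's proof simply says to rewrite the $Y_i(\alpha)$ in the Kronecker-product form obtained in Theorem~\ref{thm:3'idm} and then argue exactly as in Proposition~\ref{prop:orth2}, which is precisely the reduction-to-diagonal-with-disjoint-support argument you outline. Your explicit attention to the folding of the $a_1=\cdots=a_n=0$ term into the full sum is the right place to be careful, and citing Theorem~\ref{thm:3'idm} for it is exactly what the paper intends.
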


\begin{proof}
    The proof of this result is similar to that of Proposition \ref{prop:orth2} in the sense that we first use the same argument as in Theorem \ref{thm:3'idm} to write the idempotents as Kronecker products. Then show the inner product is $0$ by much the same argument. 
\end{proof}

We next show the idempotents that we have constructed in Theorems \ref{thm:2idm}, \ref{thm:3idm}, \ref{thm:3'idm} are all in fact pairwise orthogonal to one another. 

 \begin{Prop}\label{prop:orth3''}
    Let $G$ have nilpotency class $3$. Suppose $|Z(G)|=p^k$,$k>0$, $|G'\colon Z(G)|=p^n$, $Z(G)=\langle z_1, z_2,\cdots, z_k\rangle$, and $G'/Z(G)=\langle g_1Z(G), g_2Z(G),\cdots, g_nZ(G)\rangle$. Let $C_i\subseteq G\setminus G'$ and $C_\ell\subseteq G'\setminus Z(G)$ be classes of $G$. Let $C_0,C_1,\cdots, C_{p^k-1}$ be the central classes of $G$ and $C_{p^k-1+j}=g_jZ(G)$ for $1\leq j\leq n$. Let $\zeta$ be any primitive $p$th root of unity. Let $W_\ell(\alpha)$ be any idempotent of $T(G)$ from Theorem \ref{thm:2idm} for any $\alpha=(\alpha_1,\alpha_2,\cdots, \alpha_k)$ with each of the $\alpha_j\in \{0,1,\cdots, p-1\}$. Let $X_i(\beta)$ be any idempotent of  $T(G)$ from Theorem \ref{thm:3idm} for any $\beta=(\beta_1,\beta_2,\cdots, \beta_k)$ with each of the $\beta_j\in \{0,1,\cdots, p-1\}$ not all $0$. Let $Y_i(\gamma)$ be any idempotent of $T(G)$ from Theorem \ref{thm:3'idm} for any $\gamma=(\gamma_1,\gamma_2,\cdots, \gamma_n)$ with each of the $\gamma_j\in \{0,1,\cdots, p-1\}$ not all $0$. Then $W_\ell(\alpha), X_i(\beta)$, and $Y_i(\gamma)$ are pairwise orthogonal. 
\end{Prop}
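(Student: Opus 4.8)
The plan is to handle the three pairs separately; two of them are immediate from block structure and the third is where the real work lies. By construction $W_\ell(\alpha)$ is a polynomial in the matrices $E_\ell^*A_jE_\ell^*$, hence zero outside the $C_\ell,C_\ell$ block, while both $X_i(\beta)$ and $Y_i(\gamma)$ are polynomials in matrices of the form $E_i^*A_jE_i^*$ and hence zero outside the $C_i,C_i$ block. Since $C_\ell\subseteq G'\setminus Z(G)$ and $C_i\subseteq G\setminus G'$ these are distinct conjugacy classes, so $E_\ell^*E_i^*=0$; therefore $W_\ell(\alpha)X_i(\beta)=X_i(\beta)W_\ell(\alpha)=0$ and the same with $Y_i(\gamma)$ in place of $X_i(\beta)$, and $\langle W_\ell(\alpha),X_i(\beta)\rangle=\langle W_\ell(\alpha),Y_i(\gamma)\rangle=0$ because $W_\ell(\alpha)\overline{X_i(\beta)}^{\,T}$ and $W_\ell(\alpha)\overline{Y_i(\gamma)}^{\,T}$ are supported in disjoint row/column blocks. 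So both pairs involving $W_\ell(\alpha)$ are orthogonal.

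The substance is the pair $X_i(\beta),Y_i(\gamma)$, which live in the same $C_i,C_i$ block, so I pass to that block, writing $X_i'(\beta),Y_i'(\gamma)$ for the blocks and ordering $G'$ as in Proposition \ref{prop:kroneckerdec3'} (first $n$ Kronecker factors for $G'/Z(G)$, last $k$ for $Z(G)$). By the computations in the proofs of Proposition \ref{prop:orth3v} and Theorem \ref{thm:3'idm},
\[
X_i'(\beta)=\frac{1}{p^k}\,I_{p^n}\otimes\sum_{b_1,\dots,b_k}\zeta^{\beta_1b_1+\cdots+\beta_kb_k}\bigotimes_{j=1}^k P^{b_j},\qquad
Y_i'(\gamma)=\frac{1}{p^{n+k}}\sum_{a_1,\dots,a_n}\zeta^{\gamma_1a_1+\cdots+\gamma_na_n}\Bigl(\bigotimes_{j=1}^n P^{a_j}\Bigr)\otimes J_{p^k},
\]
where $P$ is the cyclic shift matrix of $(\ref{eq:tensor3'})$. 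Multiplying on the first-$n$/last-$k$ split and using $PJ_p=J_p$ (so $\bigl(\bigotimes_j P^{b_j}\bigr)J_{p^k}=J_{p^k}$), the $Z(G)$-factor of $X_i'(\beta)Y_i'(\gamma)$ is $\bigl(\sum_b\zeta^{\beta\cdot b}\bigr)J_{p^k}$; since $\beta\neq(0,\dots,0)$, the character sum $\sum_b\zeta^{\beta\cdot b}=\prod_{j=1}^k\bigl(\sum_{b_j=0}^{p-1}\zeta^{\beta_jb_j}\bigr)$ has a factor equal to $0$, so $X_i'(\beta)Y_i'(\gamma)=0$; symmetrically $J_pP=J_p$ gives $Y_i'(\gamma)X_i'(\beta)=0$. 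Finally, since each power of $P$ is a real permutation matrix and $J_{p^k}$ is real and symmetric, $\overline{Y_i'(\gamma)}^{\,T}$ again has the shape ``Kronecker product of permutation matrices, tensored with $J_{p^k}$, scaled by roots of unity'', so the identical argument yields $\langle X_i(\beta),Y_i(\gamma)\rangle=\trace\bigl(X_i'(\beta)\,\overline{Y_i'(\gamma)}^{\,T}\bigr)=0$. Hence $X_i(\beta)$ and $Y_i(\gamma)$ are orthogonal, which completes the proof.

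There is no genuine obstacle here: the only thing to keep straight is the two groups of tensor slots, and the key point is that the hypothesis $\beta\neq 0$ is exactly what makes the relevant sum of $p$th roots of unity vanish — the $J_{p^k}$ appearing in $Y_i'(\gamma)$ collapses all the shift matrices of $X_i'(\beta)$, so everything reduces to that character sum. As an alternative, one can simultaneously diagonalize $P$ and $J_p$ as in the proof of Theorem \ref{thm:3'idm}; then $X_i'(\beta)$ and $Y_i'(\gamma)$ become diagonal matrices whose unique nonzero entries sit at different positions of the $Z(G)$-factor (position $-\beta\not\equiv(0,\dots,0)$ versus position $(0,\dots,0)$), giving the same conclusion.
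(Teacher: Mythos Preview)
Your proof is correct, and for the pairs involving $W_\ell(\alpha)$ it matches the paper exactly (disjoint block supports). For the pair $X_i(\beta),Y_i(\gamma)$ your primary argument differs from the paper's: the paper conjugates by $\bigotimes Q$ to simultaneously diagonalize $P$ and $J_p$, obtaining diagonal matrices $B$ (nonzero only at position $(-\beta_1,\dots,-\beta_k)$) and $K$ (nonzero only at position $(0,\dots,0)$), and then observes $BK=0$ because $\beta\neq 0$. You instead exploit the elementary identity $PJ_p=J_p$ to collapse the $Z(G)$-tensor factor of the product directly to a character sum $\sum_b\zeta^{\beta\cdot b}$, which vanishes since $\beta\neq 0$; this gives the stronger conclusion $X_i'(\beta)Y_i'(\gamma)=0$ (not just trace zero) without any change of basis. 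Your approach is shorter and more transparent, while the paper's diagonalization keeps the argument parallel to the earlier orthogonality proofs; you correctly note the diagonalization route as an alternative, which is precisely what the paper does.
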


\begin{proof}
    Note that $W_\ell(\alpha)$ is only nonzero in the $C_\ell,C_\ell$ block and both $X_i(\beta),Y_i(\gamma)$ are only nonzero in the $C_i,C_i$ block. Since $C_i\neq C_\ell$ we have $W_\ell(\alpha)$ and $X_i(\beta)$ are nonzero in different blocks. Then $\langle W_\ell(\alpha),X_i(\beta)\rangle=\trace(W_\ell(\alpha)\overline{X_i(\beta)^T})=0$. Hence, $W_\ell(\alpha)$ and $X_i(\beta)$ are orthogonal. By similar reasoning $W_\ell(\alpha)$ and $Y_i(\gamma)$ are orthogonal. 

    Now we consider $X_i(\beta)$ and $Y_i(\gamma)$. We have
    \[X_i(\beta)=\frac{1}{p^k}\sum_{b_1,b_2,\cdots, b_k} \prod_{j=1}^k \zeta^{\beta_j b_j}(E_i^*A_jE_i^*)^{b_j}\]
    where $b_1,b_2,\cdots, b_k\in \{0,1,\cdots, p-1\}$ and
    \[Y_i(\gamma)=\frac{1}{p^{n+k}}\left(\sum_{j=0}^{p^k-1} E_i^*A_jE_i^*+\sum_{a_1,a_2,\cdots, a_n} p^k\prod_{j=1}^n \frac{\zeta^{\gamma_j a_j}}{p^{a_jk}}(E_i^*A_{p^k-1+j}E_i^*)^{a_j}\right)\]
    where $a_1,a_2,\cdots a_n\in \{0,1,\cdots, p-1\}$ and not all of the $a_j=0$. Since both $X_i(\beta)$ and $Y_i(\gamma)$ are $0$ outside of the $C_i,C_i$ block, the inner product of $X_i(\beta)$ and $Y_i(\gamma)$ is the same as if we only considered the inner product of the $C_i,C_i$ block of each. For this reason we let
    \[X_i'(\beta)=\frac{1}{p^k}\sum_{b_1,b_2,\cdots, b_k\in \{0,1,\cdots, p-1\}} \prod_{j=1}^k \zeta^{\beta_j b_j}(A_j')^{b_j}\]
    and
    \[Y_i'(\gamma)=\frac{1}{p^{n+k}}\left(\sum_{j=0}^{p^k-1} A_j'+\sum_{a_1,a_2,\cdots, a_n} p^k\prod_{j=1}^n \frac{\zeta^{\gamma_j a_j}}{p^{a_jk}}(A_{p^k-1+j}')^{a_j}\right)\]
    where $A_j'$ is the $C_i,C_i$ block of the matrix $A_j$. Let $P$ be the $p\times p$ matrix in (\ref{eq:tensor3}).
    Then using the same arguments as in the proofs of Theorems \ref{thm:3idm} and \ref{thm:3'idm} we have
    \[X_i'(\beta)=\frac{1}{p^k}\sum_{b_1,b_2,\cdots, b_k} \bigotimes_{i=1}^n I_p\otimes \zeta^{\beta_j b_j} \bigotimes_{j=1}^{k} P^{b_j} \text{ and } Y_i'(\gamma)=\frac{1}{p^{n+k}}\sum_{a_1,a_2,\cdots, a_n}\zeta^{\gamma_j a_j}\left(\bigotimes_{j=1}^n P^{a_j}\right)\otimes J_{p^k}.\]

     We next note that $J_{p^k}=\bigotimes_{j=1}^k J_p$ where $J_p$ is the $p\times p$ all $1$'s matrix. We also have $J_p=\sum_{m=0}^{p-1} P^m$. We note that $P$ is similar to the $p\times p$ matrix $D$ given in (\ref{eq:diag}).
    
    Let $Q'=\bigotimes_{m=1}^{n+k} Q$. Then we note $Q'X_i'(\beta)\overline{Y_i'(\gamma)^T}(Q')^{-1}$ is similar to $X_i'(\beta)\overline{Y_i'(\gamma)^T}$, so we have
    \[\langle X_i'(\beta),Y_i'(\gamma)\rangle=\trace(X_i'(\beta)\overline{Y_i'(\gamma)^T})=\trace(Q'X_i'(\beta)\overline{Y_i'(\gamma)^T}(Q')^{-1})=\trace(Q'X_i'(\beta)(Q')^{-1}Q'\overline{Y_i'(\gamma)^T}(Q')^{-1}).\]
     We note that
    \[Q'X_i'(\beta)(Q')^{-1}=\bigotimes_{m=1}^{n+k} Q \frac{1}{p^k}\sum_{b_1,b_2,\cdots, b_k} \bigotimes_{i=1}^n I_p\otimes \zeta^{\beta_j b_j} \bigotimes_{j=1}^{k} P^{b_j} \bigotimes_{m=1}^{n+k} Q^{-1}\]
    \[=\frac{1}{p^k}\sum_{b_1,b_2,\cdots, b_k} \bigotimes_{i=1}^n QI_pQ^{-1}\otimes \zeta^{\beta_j b_j} \bigotimes_{j=1}^{k} QP^{b_j}Q^{-1}=\frac{1}{p^k}\sum_{b_1,b_2,\cdots, b_k} \bigotimes_{i=1}^n I_p\otimes \zeta^{\beta_j b_j} \bigotimes_{j=1}^{k} D^{b_j}\]
    \[=\frac{1}{p^k}\sum_{b_1,b_2,\cdots, b_k} I_{p^n}\otimes\bigotimes_{j=1}^k \left(\zeta^{\beta_j}D\right)^{b_j}=I_{p^n}\otimes \frac{1}{p^k}\sum_{b_1,b_2,\cdots, b_k} \bigotimes_{j=1}^k \left(\zeta^{\beta_j}D\right)^{b_j}.\]
    We also have
    \[Q'Y_i'(\gamma)(Q')^{-1}=\bigotimes_{m=1}^{n+k} Q\left( \frac{1}{p^{n+k}}\sum_{a_1,a_2,\cdots, a_n}\zeta^{\gamma_j a_j}\left(\bigotimes_{j=1}^n P^{a_j}\right)\otimes J_{p^k}\right) \bigotimes_{m=1}^{n+k} Q^{-1}\]
    \[=\bigotimes_{m=1}^{n+k} Q \left(\frac{1}{p^{n+k}}\sum_{a_1,a_2,\cdots, a_n}\zeta^{\gamma_j a_j}\left(\bigotimes_{j=1}^n P^{a_j}\right)\otimes \bigotimes_{j=1}^k J_p\right) \bigotimes_{m=1}^{n+k} Q^{-1}\]
    \[=\frac{1}{p^{n+k}}\sum_{a_1,a_2,\cdots, a_n} \zeta^{\gamma_j a_j}\bigotimes_{j=1}^n QP^{a_j}Q^{-1}\otimes \bigotimes_{j=1}^k QJ_pQ^{-1}=\frac{1}{p^{n+k}}\sum_{a_1,a_2,\cdots, a_n} \zeta^{\gamma_j a_j}\bigotimes_{j=1}^n D^{a_j}\otimes \bigotimes_{j=1}^k K_p.\]
    Observe that $K=\bigotimes_{j=1}^k K_p$ is a diagonal matrix with $p^k$ in the first position and $0$ elsewhere. So
    \[Q'Y_i(\gamma)(Q')^{-1}=\frac{1}{p^{n+k}}\sum_{a_1,a_2,\cdots, a_n} \zeta^{\gamma_j a_j}\bigotimes_{j=1}^n D^{a_j}\otimes K.\]
    Hence,
    \[\langle X_i'(\beta),Y_i'(\gamma)=\trace\left(I_{p^n}\otimes \frac{1}{p^k}\sum_{b_1,b_2,\cdots, b_k} \bigotimes_{j=1}^k \left(\zeta^{\beta_j}D\right)^{b_j}\overline{\left(\frac{1}{p^{n+k}}\sum_{a_1,a_2,\cdots, a_n} \zeta^{\gamma_j a_j}\bigotimes_{j=1}^n D^{a_j}\otimes K \right)^T} \right)\]
    \[=\trace\left(\frac{1}{p^{n+k}}\overline{\left(\sum_{a_1,a_2,\cdots, a_n} \zeta^{\gamma_j a_j}\bigotimes_{j=1}^n D^{a_j}\right)^T}\otimes\left( \frac{1}{p^k}\sum_{b_1,b_2,\cdots, b_k} \bigotimes_{j=1}^k \left(\zeta^{\beta_j}D\right)^{b_j}\right)K\right).\]

    Note in $\bigotimes_{m=1}^k D$ that each entry is a product of powers of $\zeta$. With this observation we let $(t_1,t_2,\cdots, t_k)$ denote the entry of $\bigotimes_{m=1}^k D$ corresponding to $\zeta^{t_1}\zeta^{t_2}\cdots \zeta^{t_k}$. With this notation we saw in the proof of Theorem \ref{thm:3idm} that 
    \[\frac{1}{p^k}\sum_{b_1,b_2,\cdots, b_k\in \{0,1,\cdots, p-1\}} \zeta^{\beta_j b_j}\bigotimes_{j=1}^k D^{b_j}\]
    is a diagonal matrix with a single nonzero entry of $1$ in diagonal position $(-\beta_1,-\beta_2,\cdots, -\beta_k)$ (where each of these terms is taken modulo $p$). In particular, as not all of the $\beta_j$ are $0$ by assumption, the first diagonal entry of 
    \[B=\frac{1}{p^k}\sum_{b_1,b_2,\cdots, b_k} \zeta^{\beta_j b_j}\bigotimes_{j=1}^k D^{b_j}\]
    is $0$. We have that $B$ and $K$ are diagonal matrices where $B$ is $0$ in the first diagonal entry and $K$ is only nonzero in the first diagonal entry. Thus, $BK=0$. 
    Then
    \[\trace\left(\frac{1}{p^{n+k}}\overline{\left(\sum_{a_1,a_2,\cdots, a_n} \zeta^{\gamma_j a_j}\bigotimes_{j=1}^n D^{a_j}\right)^T}\otimes BK\right)=\trace\left(\frac{1}{p^{n+k}}\overline{\left(\sum_{a_1,a_2,\cdots, a_n} \zeta^{\gamma_j a_j}\bigotimes_{j=1}^n D^{a_j}\right)^T}\otimes0\right)=0.\]
    Therefore, $\langle X_i'(\beta),Y_i'(\gamma)\rangle=0$. Recall that this equals $\langle X_i(\beta),Y_i(\gamma)\rangle$. Hence, $X_i(\beta)$ and $Y_i(\gamma)$ are orthogonal.
\end{proof}

The idempotents $W_\ell(\alpha),X_i(\beta),Y_i(\gamma)$ are all of the non-primary primitive idempotents of $T(G)$. We show this by finding the Wedderburn decomposition.

\begin{Thm}\label{thm:cam3decomp}
    Let $G$ have nilpotency class $3$ with $|G:G'|=p^{2n}$, $|G':Z(G)|=p^n$ for some even integer $n$. Suppose $|Z(G)|=p^k$. Then the Wedderburn decomposition of $T(G)$ is
    \[T(G)=V\oplus \bigoplus_{i=1}^{p^{n}-1}\bigoplus_\alpha\mathcal{W}_i(\alpha)\oplus \bigoplus_{t=1}^{p^{2n}-1}\bigoplus_\beta \mathcal{X}_t(\beta)\oplus \bigoplus_{s=1}^{p^{2n}-1} \bigoplus_\gamma \mathcal{Y}_s(\gamma),\]
    where $V$ is the primary component, each $\mathcal{W}_i(\alpha)$ is a one-dimensional ideal generated by some $W_i(\alpha)$ where $\alpha=(\alpha_1,\alpha_2,\cdots, \alpha_k)$ with each $\alpha_j\in \{0,1,2,\cdots, p-1\}$ not all $\alpha_j$ are $0$. Each $\mathcal{X}_t(\beta)$ is a one-dimensional ideal generated by some $X_t(\beta)$ where $\beta=(\beta_1,\beta_2,\cdots, \beta_k)$ with each $\beta_j\in \{0,1,\cdots, p-1\}$ not all $\beta_j$ are zero. Each $\mathcal{Y}_s(\gamma)$ is a one-dimensional ideal generated by some $Y_s(\gamma)$ where $\gamma=(\gamma_1,\gamma_2,\cdots, \gamma_n)$ with each $\gamma_j\in \{0,1,\cdots, p-1\}$ not all $\gamma_j$ are zero. 
\end{Thm}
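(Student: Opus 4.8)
The plan is to follow the template of the proof of Theorem~\ref{thm:cam2decomp}, now bookkeeping three families of one-dimensional ideals instead of one. First I would record the dimension count. Since $T(G)$ is triply regular, $\dim T(G)$ is the formula of Proposition~\ref{cor:p3dim}, while the primary component $V$ is a full matrix algebra whose dimension is the square of the number of conjugacy classes, namely $(p^{2n}+p^n+p^k-2)^2$, as counted in the proof of that proposition. Writing $T(G)=V\oplus R$, a tedious but routine subtraction gives
\[\dim R=p^{3n}+p^{2n+k}+p^{n+k}-2p^{2n}-2p^n-2p^k+3=(p^n-1)(p^k-1)+(p^{2n}-1)(p^k-1)+(p^{2n}-1)(p^n-1),\]
which is exactly the number of ideals $\mathcal{W}_i(\alpha),\mathcal{X}_t(\beta),\mathcal{Y}_s(\gamma)$ in the claimed decomposition: $C_i\subseteq G'\setminus Z(G)$ ranges over $p^n-1$ classes, $C_t,C_s\subseteq G\setminus G'$ over $p^{2n}-1$ classes, $\alpha,\beta$ over $p^k-1$ nonzero tuples, and $\gamma$ over $p^n-1$ nonzero tuples.

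Next I would assemble the orthogonality statements. Two of these idempotents attached to different blocks $C_a,C_a$ are automatically orthogonal, which disposes of $W_\ell(\alpha)$ against $W_{\ell'}(\alpha')$, $X_t(\beta)$ against $X_{t'}(\beta')$, and $Y_s(\gamma)$ against $Y_{s'}(\gamma')$ for distinct blocks, and also every $W_\ell(\alpha)$ against every $X_t(\beta)$ and $Y_s(\gamma)$, since a class in $G'\setminus Z(G)$ is never a class in $G\setminus G'$. Within a fixed block, orthogonality among the $W$'s is Proposition~\ref{prop:orth2}, among the $X$'s is Proposition~\ref{prop:orth3}, among the $Y$'s is Proposition~\ref{prop:orth3'}, and between an $X$ and a $Y$ it is Proposition~\ref{prop:orth3''}. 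Thus the whole list of idempotents with a nonzero parameter vector is pairwise orthogonal.

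Then I would show each of these idempotents is orthogonal to $V$. For the $X_t(\beta)$ this is precisely Proposition~\ref{prop:orth3v}. For the $W_\ell(\alpha)$ I would repeat the device from Theorem~\ref{thm:cam2decomp}: by Lemma~\ref{lem:nil2structure2}, $W_\ell(0)$ is $\frac{1}{p^k}$ times the sum over all central classes $C_c$ of the $C_\ell,C_\ell$ blocks $E_\ell^*A_cE_\ell^*$; by Lemma~\ref{lem:cam2outer} every adjacency matrix of a class outside $G'$ vanishes on the $C_\ell,C_\ell$ block, and by Lemma~\ref{lem:cam3inner} so does every adjacency matrix of a class in $G'\setminus Z(G)$, so—since all adjacency matrices sum to $J$—this sum is the all-ones matrix on the block, making $W_\ell(0)$ a nonzero scalar multiple of the basis element $v_{\ell\ell}$ of $V$. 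Then Proposition~\ref{prop:orth2} gives $\langle W_\ell(\alpha),W_\ell(0)\rangle=0$ for $\alpha\ne0$, hence $\langle W_\ell(\alpha),v_{\ell\ell}\rangle=0$, and orthogonality to every other $v_{st}$ is automatic, so $W_\ell(\alpha)$ is orthogonal to $V$. For the $Y_s(\gamma)$ I would run the analogous computation: by Lemma~\ref{lem:nil3structure2'} together with the definition of $Y_s$, $Y_s(0)$ is $\frac{1}{p^{n+k}}$ times the sum over all classes $C_c\subseteq G'$ of the blocks $E_s^*A_cE_s^*$, and by Lemma~\ref{lem:cam2outer} the classes outside $G'$ contribute nothing to the $C_s,C_s$ block, so once more $Y_s(0)$ is a scalar multiple of $v_{ss}\in V$, and Proposition~\ref{prop:orth3'} yields that $Y_s(\gamma)$ is orthogonal to $V$ for $\gamma\ne0$.

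Finally I would assemble the pieces exactly as in Theorem~\ref{thm:cam2decomp}: each of the (nonzero) idempotents $W_i(\alpha),X_t(\beta),Y_s(\gamma)$ generates an ideal lying in $R$; being generated by pairwise orthogonal idempotents these ideals form a direct sum inside $R$; since their number equals $\dim R$ each is one-dimensional—hence irreducible, and its generator is a non-primary primitive central idempotent—and together they exhaust $R$. Decomposing $R$ into these irreducible pieces and recalling $T(G)=V\oplus R$ gives the stated Wedderburn decomposition. The hard part will be the orthogonality-to-$V$ argument for the $Y_s(\gamma)$: it is the one ingredient with no ready-made proposition to quote, requiring one to identify correctly which adjacency-matrix blocks survive after summing over all of $G'$ while keeping the normalizing powers of $p$ straight; the dimension arithmetic in the first step is the other place that demands care.
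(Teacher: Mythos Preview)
Your proposal is correct and follows essentially the same approach as the paper's proof: compute $\dim R$ from Proposition~\ref{cor:p3dim}, show each of the three families of idempotents is orthogonal to $V$ (quoting Proposition~\ref{prop:orth3v} for the $X$'s, and identifying $W_\ell(0)$ and $Y_s(0)$ with scalar multiples of $v_{\ell\ell}$ and $v_{ss}$ via Lemmas~\ref{lem:nil2structure2} and~\ref{lem:nil3structure2'} respectively), invoke the pairwise orthogonality propositions, and finish by the dimension count. You are actually slightly more careful than the paper in one spot: where the paper simply writes ``by the same argument as in the proof of Theorem~\ref{thm:cam2decomp}'' for the $W_\ell(\alpha)$, you correctly note that in nilpotency class~$3$ one must also check, via Lemma~\ref{lem:cam3inner}, that the adjacency matrices for classes in $G'\setminus Z(G)$ vanish on the $C_\ell,C_\ell$ block.
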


\begin{proof}
    By Proposition \ref{cor:p3dim}, $\dim T(G)=3p^{k+n}+3p^{k+2n}+p^{2k}-6p^k+p^{4n}+3p^{3n}-5p^{2n}-6p^n+7$. We know $V$ is an irreducible ideal with dimension equal to the number of classes of $G$ squared. From the proof of Proposition \ref{cor:p3dim}, $\dim(V)=(p^{2n}+p^n+p^k-2)^2$. We can then write $T(G)$ as $T(G)=V\oplus R$ where $R$ is an ideal with $\dim (R)=p^{k+n}+p^{k+2n}-2p^k+p^{3n}-2p^{2n}-2p^n+3=(p^n-1)(p^k-1)+(p^{2n}-1)(p^k-1)+(p^{2n}-1)(p^n-1)$. 

    By the same argument as in the proof of Theorem \ref{thm:cam2decomp} we have for all $C_i\in G'\setminus Z(G)$ and $\beta\neq (0,0,\cdots, 0)$ that $W_i(\beta)\in R$.  Now let $\mathcal{W}_{i}(\alpha)$ be the ideal generated by $W_i(\alpha)$. Then for all $\alpha\neq (0,0,\cdots, 0)$, $\mathcal{W}_{i}(\alpha)\subseteq R$. There are a total of $p^k-1$ choices for $\alpha\neq (0,0,\cdots, 0)$. As $|G'\colon Z(G)|=p^n$ and every class $C_i\subseteq G'\setminus Z(G)$ is of the form $xZ(G)$ by Proposition \ref{lem:pcamclass}, there are $p^n-1$ choices for $C_i$. Hence, we have found $(p^k-1)(p^{n}-1)$ orthogonal ideals all in $R$.
    
    From Proposition \ref{prop:orth3'} we have for each choice of $C_\ell\subseteq G\setminus G'$ idempotents $Y_\ell(\alpha)$ all of which are orthogonal. Clearly for $\ell\neq j$ we have that $Y_\ell(\alpha)$ and $Y_j(\beta)$ are orthogonal as they are nonzero in different blocks. For any fixed $C_\ell\subseteq G\setminus G'$, we consider $Y_\ell(\alpha)$ when $\alpha=(0,0,\cdots, 0)$. We have
    \[Y_\ell(\alpha)=\frac{1}{p^{n+k}}\left(\sum_{j=0}^{p^k-1} E_\ell^*A_jE_\ell^*+\sum_{a_1,a_2,\cdots, a_n} p^k\prod_{j=1}^n \frac{1}{p^{a_jk}}(E_\ell^*A_{p^k-1+j}E_\ell^*)^{a_j}\right).\]
    where not all of the $a_i$ are $0$.
    
    By Lemma \ref{lem:nil3structure2'}, $p^k\prod_{j=1}^n\frac{1}{p^a_j k} (E_\ell^*A_jE_\ell^*)^{a_j}=E_\ell^*A_{u}E_\ell^*$ where $C_u=g_1^{a_1}g_2^{a_2}\cdots g_n^{a_n}Z(G)$. Then this means that
    \[\sum_{a_1,a_2,\cdots, a_n} p^k\prod_{j=1}^n \frac{1}{p^{a_jk}}(E_\ell^*A_{p^k-1+j}E_\ell^*)^{a_j}\]
    is just a sum of the $C_\ell,C_\ell$ blocks of all the adjacency matrices corresponding to a conjugacy class in $G'\setminus Z(G)$. We note that $\sum_{j=0}^{p^k-1} E_\ell^*A_jE_\ell^*$
    is just a sum of the $C_\ell,C_\ell$ blocks of all the adjacency matrices corresponding to the class $C_\ell$ in $Z(G)$. Then $Y_\ell(\alpha)$ is the sum of the $C_\ell,C_\ell$ block of all the conjugacy classes contained in $G'$. 
    
    Let $C_t$ be any class of $G$ outside of $G'$. Say $C_\ell=xG'$ and $C_t=yG'$. Then $C_\ell C_t=xyG'$. If $x\in xyG'$, then $x=xyz$ for some $z\in G'$ and then $y^{-1}=z$. This implies $y^{-1}\in G'$, which is false. Therefore, $C_\ell\not\subseteq C_\ell C_t$. Then by Lemma \ref{lem:cam2outer} the $C_\ell,C_\ell$ block of $A_t$ is all $0$. Hence, every adjacency matrix corresponding to a class outside of $G'$ is $0$ in the $C_\ell,C_\ell$ block. As the sum of all the adjacency matrices is the all $1$'s matrix, the sum of all the adjacency matrices corresponding to $C_t$ outside $G'$ must be $0$ in the $C_\ell,C_\ell$ block. Hence, the sum of all the adjacency matrices corresponding to $C_t\subseteq G'$ must be the all $1$'s matrix in the $C_\ell,C_\ell$ block. Hence, we have that all the entries of $Y_\ell(\alpha)$ in the $C_\ell,C_\ell$ block are $1$ and all the entries outside of this block are $0$. Therefore, it is equal to the basis element of $V$ that is the all $1$'s matrix in block $C_\ell,C_\ell$ and $0$ outside this block. We denote this as $v_{\ell\ell}$. 

    Then for all $\gamma\neq (0,0,\cdots, 0)$ we have $\langle Y_\ell(\alpha),Y_\ell(\gamma)\rangle=0$, so $\langle v_{\ell\ell},Y_\ell(\gamma)\rangle=0$ as $Y_\ell(\alpha)$ is a multiple of $v_{\ell\ell}$. Clearly we have $\langle v_{st},X_\ell(\gamma)\rangle=0$ for all $s,t$ not both equal to $\ell$. Hence, for all $\gamma\neq (0,0,\cdots, 0)$ we have $Y_\ell(\gamma)$ is orthogonal to $V$. Then $Y_\ell(\gamma)\in R$. As we chose $C_\ell$ arbitrarily, this is true for all $C_\ell\subseteq G\setminus G'$.

    Now let $\mathcal{Y}_{\ell}(\gamma)$ be the ideal generated by $Y_\ell(\gamma)$. Then for all $\gamma\neq (0,0,\cdots, 0)$, $\mathcal{Y}_{\ell}(\gamma)\subseteq R$. There are a total of $p^n-1$ choices for $\gamma\neq (0,0,\cdots, 0)$. As $|G\colon G'|=p^{2n-1}$ an every class $C_\ell\subseteq G\setminus G'$ is of the form $xG'$ by Proposition \ref{lem:pcamclass}, there are $p^{2n}-1$ choices for $C_\ell$. Hence, we have found $(p^n-1)(p^{2n}-1)$ orthogonal ideals all in $R$. By Proposition \ref{prop:orth3''}, each $Y_\ell(\alpha)$ is orthogonal to each $W_i(\beta)$. Then none of the $\mathcal{W}_{i}(\alpha)$ and $\mathcal{Y}_{\ell}(\gamma)$ can intersect. We hence have found $(p^k-1)(p^n-1)+(p^n-1)(p^{2n}-1)$ orthogonal ideals all in $R$.

    By Proposition \ref{prop:orth3v}, each of the $X_\ell(\alpha)$ with not all the $\alpha_j$ are zero, is orthogonal to $V$. Then $X_\ell(\beta)\in R$ for all $\ell$ and $\beta$ with $\beta_j$ not all $0$.  Now let $\mathcal{X}_{\ell}(\beta)$ be the ideal generated by $X_\ell(\beta)$. Then for all $\beta\neq (0,0,\cdots, 0)$, $\mathcal{X}_{\ell}(\beta)\subseteq R$. There are a total of $p^k-1$ choices for $\beta\neq (0,0,\cdots, 0)$. As $|G\colon G'|=p^{2n}$ and every class $C_\ell\subseteq G\setminus G'$ is of the form $xG'$ by Proposition \ref{lem:pcamclass}, there are $p^{2n}-1$ choices for $C_\ell$. Hence, we have found $(p^k-1)(p^{2n}-1)$ orthogonal ideals all in $R$. By Proposition \ref{prop:orth3''}, each of the $X_\ell(\beta)$ are orthogonal to each of the $W_i(\alpha)$ and each of the $Y_\ell(\gamma)$. Then none of the $\mathcal{W}_{i}(\alpha)$, $\mathcal{X}_{\ell}(\beta)$, and $\mathcal{Y}_{\ell}(\gamma)$ can intersect. We hence have found $(p^k-1)(p^n-1)+(p^k-1)(p^{2n}-1)+(p^{2n}-1)(p^n-1)$ orthogonal ideals all in $R$. This equals $\dim(R)$, so each $\mathcal{W}_{i}(\alpha)$, $\mathcal{X}_{\ell}(\beta)$, $\mathcal{Y}_{\ell}(\gamma)$ must be $1-$dimensional and thus irreducible. Decomposing $R$ into its irreducible components we have
    \[T(G)=V\oplus \bigoplus_{i=1}^{p^{n}-1}\bigoplus_\alpha\mathcal{W}_i(\alpha)\oplus \bigoplus_{t=1}^{p^{2n}-1}\bigoplus_\beta \mathcal{X}_t(\beta)\oplus \bigoplus_{s=1}^{p^{2n}-1} \bigoplus_\gamma \mathcal{Y}_s(\gamma).\qedhere\]
\end{proof}

\bibliographystyle{plain}
\bibliography{Sources}

\end{document}